\documentclass[11pt]{amsart}
\usepackage{amscd,amssymb}
\usepackage{graphicx}
\usepackage[all]{xy}

\theoremstyle{plain}

\pagestyle{myheadings}
\newtheorem{theorem}{Theorem}[section]
\newtheorem{thm}[equation]{Theorem}
\newtheorem{prop}[equation]{Proposition}
\newtheorem{cor}[equation]{Corollary}

\newtheorem{lemma}[equation]{Lemma}

\numberwithin{equation}{section}


\newcommand{\R}{\mathbb R}

\newcommand{\A}{\mathbb A}

\def\Hom{{\rm Hom}}

\def\Aut{{\rm Aut}}

\def\vol{{\rm vol}}

\def\SL{{\rm SL}}

\def\GSp{{\rm GSp}}

\def\PGSp{{\rm PGSp}}

\def\Sp{{\rm Sp}}
\def\Spin{{\rm Spin}}

\def\SU{{\rm SU}}

\def\U{{\rm U}}

\def\GL{{\rm GL}}
\def\PGL{{\rm PGL}}

\def\SO{{\rm SO}}
\def\Ind{{\rm Ind}}
\def\Sp{{\rm Sp}}

\def\tr{{\rm tr\,}}

\def\A{{\mathbb A}}

\def\R{{\mathbb R}}

\def\n{{\bf n}}

\setlength{\oddsidemargin}{0.2in}
\setlength{\evensidemargin}{0.2in}
\setlength{\textwidth}{6.1in}

\title {An exceptional Siegel-Weil formula and \\ Poles of  the Spin L-function of $\PGSp_6$}

\author{Wee Teck Gan and Gordan Savin}

\address{W.T.G.:   Department of Mathematics, National University of Singapore, 10 Lower Kent Ridge Road
Singapore 119076} \email{matgwt@nus.edu.sg}
\address{G. S.: Department of Mathematics, University of Utah, Salt Lake City, UT 84112, USA}\email{savin@math.utah.edu}
 \subjclass[2000]{11F27, 11F70,  22E50}
\begin{document}
\maketitle

\begin{abstract}
 We show a Siegel-Weil formula in the setting of exceptional theta correspondence. Using this, together with a new Rankin-Selberg integral for the Spin L-function of $\PGSp_6$ discovered by A. Pollack, we prove that a cuspidal representation of $\PGSp_6$ is a (weak) functorial lift from the exceptional group $G_2$ if  its (partial) Spin L-function has a pole at $s=1$.
 \end{abstract}

\section{Introduction}

Let $F$ be a totally real number field, and $\mathbb A$ its ring of ad\`eles. 
Let $\pi \cong \otimes_v \pi_v$ be an irreducible cuspidal automorphic representation of 
the group $\PGSp_6(\mathbb A)$, which is unramified outside a finite set  $S$  of places (including all real places). 
 Since the Langlands dual group of $\PGSp_6$ is $\Spin_7(\mathbb C)$, there is an associated semi-simple conjugacy class $s_v$ in $\Spin_7(\mathbb C)$ for $v \notin S$;
 this is the Satake parameter of the local component $\pi_v$. 
  If  $r$ denotes  the 8-dimensional spin representation of $\Spin_7(\mathbb C)$, the partial spin $L$-function corresponding to $\pi$ is defined to 
 be the product 
 \[ 
 L^S(s,\pi, {\rm Spin}) = \prod_{v\notin S} \frac{1}{\det( 1-  r(s_v) q_v^{-s})}
 \]
 where $q_v$ is the order of the residual field of the local field $F_v$. 
 \vskip 5pt
 
 It is well known that the stabilizer in $\Spin_7(\mathbb C)$ of a generic vector in the spin representation 
 is the exceptional group $G_2(\mathbb C)$, giving a well-defined conjugacy class of embedding 
 \[ \iota: G_2(\mathbb C) \longrightarrow \Spin_7(\mathbb C). \]
   Therefore, as a special case of the Langlands functoriality principle, if
 $L_S(s,\pi, {\rm Spin})$ has a simple pole at $s=1$,  then one expects $\pi$ to be a functorial lift from an exceptional group of absolute type $\mathbf G_2$ defined over $F$.
 We note that every such group  is given as the  automorphism group of an octonion algebra $\mathbb O$ over $F$, and by the Hasse principle, the 
 number of isomorphism classes of such groups is $2^n$ where $n$ is the number of real places of $F$. 
 \vskip 5pt
 
  As explained in a recent paper of Chenevier \cite[\S 6.12]{C}, if $\pi$ is a tempered cuspidal representation of $\PGSp_6$ such that for almost all places $v$, the Satake parameter $s_v$ of $\pi_v$   belongs to $\iota(G_2(\mathbb C))$ (or more accurately,  the conjugacy class $s_v$ meets $\iota(G_2(\mathbb C))$), then $L^S(s, \pi, {\rm Spin})$ will have a pole at $s=1$ and so one expects such a tempered $\pi$ to be a functorial lift from $G_2$. 
  In this paper we also prove a slightly weaker version of this expectation:
  \vskip 5pt
  
  \begin{thm}  \label{T:main}
  In the above setting, with $F=\mathbb Q$, suppose that $\pi$ is a cuspidal automorphic representation of $\PGSp_6$ such that $L^S(s, \pi, {\rm Spin})$ has a pole at $s=1$. Then  there exists an octonion algebra $\mathbb O$ over $F$ and  a cuspidal  automorphic representation $\pi'$ of $\Aut(\mathbb O)$ such that the Satake parameters of $\pi'$ 
  are mapped by $\iota$ to those of $\pi$ (i.e. $\pi$ is  a weak functorial lift of $\pi'$).   
  \vskip 5pt
  
  If the cuspidal representation $\pi$ of $\PGSp_6$ is tempered, then the following are equivalent:
  \vskip 5pt
  \begin{itemize}
  \item[(a)]  For almost all places $v$, the Satake parameter $s_v$ of $\pi_v$ is contained in $\iota(G_2(\mathbb C))$.
  
  \item[(b)] There exists an octonion algebra $\mathbb O$ over $F$ and  a cuspidal automorphic representation $\pi'$ of $\Aut(\mathbb O)$ such that  $\pi$ is a weak functorial lift of $\pi'$.
    \end{itemize}
      \end{thm}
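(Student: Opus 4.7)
The strategy is to combine A. Pollack's new Rankin--Selberg integral for the Spin $L$-function of $\PGSp_6$ with the exceptional Siegel--Weil formula established earlier in the paper. The Rankin--Selberg integral represents $L^S(s,\pi,{\rm Spin})$ as a period of a cusp form $\varphi \in \pi$ against an Eisenstein series $E(g,s,\Phi)$ on an auxiliary exceptional group $H$ which contains, for each octonion algebra $\mathbb{O}/F$, a dual pair $(\Aut(\mathbb{O}),\PGSp_6)$; the Siegel--Weil formula identifies the residue at $s=1$ of this Eisenstein series with a sum over $\mathbb{O}$ of theta integrals over $\Aut(\mathbb{O})$ built from the minimal representation of $H$.

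First I would unfold Pollack's integral
\[
Z(s,\varphi,\Phi) = \int_{[\PGSp_6]} \varphi(g)\, E(g,s,\Phi)\, dg
\]
to express it as $L^S(s,\pi,{\rm Spin})$ times finitely many ramified local integrals. A pole of $L^S(s,\pi,{\rm Spin})$ at $s=1$ then forces a pole of $E(g,s,\Phi)$ there, and the period of $\varphi$ against $\mathcal{E} := \Res_{s=1}\, E(g,s,\Phi)$ is non-zero for a suitable choice of $(\varphi,\Phi)$. The Siegel--Weil formula rewrites $\mathcal{E}$ as $\sum_{\mathbb{O}} I^{\mathbb{O}}(g;f)$ where
\[
I^{\mathbb{O}}(g;f) = \int_{[\Aut(\mathbb{O})]} \Theta(g,h;f)\,dh,
\]
so non-vanishing of the period against $\mathcal{E}$ implies that the exceptional theta lift of $\pi$ to $\Aut(\mathbb{O})$ is non-zero for at least one octonion algebra $\mathbb{O}$.

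Next I must check that this theta lift produces a \emph{cuspidal} automorphic representation $\pi'$ of $\Aut(\mathbb{O})$ whose Satake parameters are mapped to those of $\pi$ by $\iota$. When $\mathbb{O}$ is non-split, $\Aut(\mathbb{O})$ is an anisotropic form of $G_2$, so $[\Aut(\mathbb{O})]$ is compact and cuspidality is automatic. When $\mathbb{O}$ is the split octonion algebra and $\Aut(\mathbb{O})$ is split $G_2$, I would prove cuspidality by a tower argument: compute the constant term of the theta lift along each of the two maximal parabolics of $G_2$ and rule out non-vanishing, since a non-vanishing constant term would realise $\pi$ as a theta lift from a smaller classical group, contradicting either the location or the order of the pole at $s=1$. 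Matching Satake parameters via $\iota$ is then a routine unramified computation for the exceptional theta correspondence, modelled on analogous computations in classical Howe-type settings. I expect the main technical hurdles to be (i) arranging that the period of $\varphi$ against the residual Eisenstein series is genuinely non-zero, particularly at the archimedean place, and (ii) the cuspidality argument in the split case.

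For the equivalence in the tempered case, (b) $\Rightarrow$ (a) is immediate from the definition of a weak functorial lift. For (a) $\Rightarrow$ (b), the key observation is that under $\iota$ the $8$-dimensional spin representation of $\Spin_7(\mathbb{C})$ restricts to $G_2(\mathbb{C})$ as $\mathbf{1} \oplus \mathbf{7}$, where $\mathbf{7}$ denotes the standard $7$-dimensional representation. Thus if $s_v = \iota(t_v)$ for almost all $v$, we obtain the factorisation
\[
L^S(s,\pi,{\rm Spin}) \;=\; \zeta^S(s)\cdot \prod_{v\notin S} \det\!\bigl(1 - \mathbf{7}(t_v)\,q_v^{-s}\bigr)^{-1}.
\]
Temperedness of $\pi$ ensures $|t_v|=1$, so the second Euler product converges absolutely for $\mathrm{Re}(s)>1$ and does not vanish there; a standard Landau-type argument shows it is holomorphic and non-zero at $s=1$ as well, whence $L^S(s,\pi,{\rm Spin})$ inherits a pole at $s=1$ from $\zeta^S(s)$, and the first part of the theorem supplies the desired cuspidal $\pi'$.
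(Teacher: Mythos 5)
Your high-level strategy — combine Pollack's Rankin--Selberg integral with the exceptional Siegel--Weil formula to convert the pole of $L^S(s,\pi,{\rm Spin})$ at $s=1$ into non-vanishing of an exceptional theta lift of $\pi$ to some $\Aut(\mathbb O)$ — is indeed the paper's strategy, and your treatment of cuspidality (compact quotient for anisotropic $\Aut(\mathbb O)$; constant-term/tower argument for split $\mathbb O$, as in \cite{GJ}) and of (a)$\Rightarrow$(b) in the tempered case (the decomposition $\mathbf 1\oplus\mathbf 7$ of the spin representation on $G_2(\mathbb C)$, giving $L^S(s,\pi,{\rm Spin})=\zeta^S(s)\,L^S(s,\pi,{\rm Std})$) agrees with the paper.

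However, the structural details of how the Siegel--Weil formula enters are confused in a way that matters. The quaternion algebra $D$, which is central to the argument, is absent from your proposal. Pollack's Eisenstein series $E_D(s,\Phi)$ lives on a group $G_D$ of absolute type $\mathbf D_6$ (the centralizer of $D^1$ in the adjoint $\mathbf E_7$-group $G$), not on an ``auxiliary exceptional group $H$''; what is exceptional is the ambient $G$, inside which $D^1\times G_D$ and $\Aut(\mathbb O)\times\PGSp_6$ form a see-saw pair. The Siegel--Weil formula (Theorem \ref{T:sw}) identifies ${\rm Res}_{s=1}E_D(s,\Phi)$ with a sum over octonion algebras $\mathbb O\supset D$ of the regularized theta lifts $\Theta(1)$ of the \emph{trivial representation of $D^1$} to $G_D$: the theta integral is over $[D^1]$, not over $[\Aut(\mathbb O)]$ as in your $I^{\mathbb O}(g;f)=\int_{[\Aut(\mathbb O)]}\Theta(g,h;f)\,dh$. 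As written, your $I^{\mathbb O}$ would be the theta lift of $1_{\Aut(\mathbb O)}$ to $\PGSp_6$, a different object. The passage from non-vanishing of the period of $\varphi\in\pi$ against the residue $\mathcal E_D$ to non-vanishing of the theta lift of $\pi$ to $\Aut(\mathbb O)$ is then a \emph{separate} see-saw unfolding step, which requires justifying the interchange of the integrals over $[D^1]$ and $[\PGSp_6]$ via the regularization by a Bernstein center element $z$; that justification is a genuine part of the argument, not a cosmetic one. So while your global outline is correct, the proposal collapses the two dual pairs in the see-saw into one and thereby misstates the Siegel--Weil identity it relies upon.
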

  
  Since the local Langlands classification is not known for $\mathbf G_2$ and for $\PGSp_6$, this is essentially the best possible result one can expect at the moment. However, 
  if $\pi$ is unramified everywhere or if it corresponds to a classical Siegel modular form, then $\pi$ is a functorial lift. 
  Special cases of this result were previously obtained by Ginzburg and Jiang \cite{GJ}, Gan and Gurevich \cite{GG} and Pollack and Shah \cite{PS}. 
  \medskip 
  
 Our proof of Theorem \ref{T:main} is based on the following three ingredients:
 \vskip 5pt
 
 \begin{itemize}
 \item[(1)] An exceptional theta correspondence for the dual pair $\Aut(\mathbb O) \times \PGSp_6$ arising from 
 the minimal representation $\Pi$ of a group of absolute type $\mathbf E_7$. 
 
 \vskip 5pt
 
 \item[(2)]  A Siegel-Weil formula proved in this paper; see Theorem \ref{T:sw} below. 
 \vskip 5pt
 
 \item[(3)]   An integral representation of the spin $L$-function of $\pi$ recently discovered by A. Pollack \cite{P}. This work is over $F=\mathbb Q$ and is the source of the same 
 restriction in Theorem \ref{T:main}. 
 \end{itemize}
 \vskip 5pt
 
 In greater detail, let $J$ be the exceptional Jordan algebra of $3\times 3$ hermitian symmetric matrices with coefficients in an octonion algebra $\mathbb O$. By the Koecher-Tits 
 construction, the algebra $J$ gives rise to an adjoint group $G$ of absolute type $\mathbf E_7$, with a maximal parabolic subgroup $P=MN$, such that the unipotent 
 radical $N$ is commutative and isomorphic to $J$. Since $G$ is adjoint, the conjugation action of $M$ on $N$ is faithful, and $M$ is isomorphic to the similitude group 
  of the natural cubic norm form on $J$. Thus the natural action of $\Aut(\mathbb O)$ on $J$ gives an embedding of $\Aut(\mathbb O)$  into $M$. The centralizer 
 of $\Aut(\mathbb O)$ is $\PGSp_6$. 
 To see this, observe that the centralizer of $\Aut(\mathbb O)$ in $J$ is the Jordan subalgebra $J_F$  of $3\times 3$  symmetric matrices with coefficients in $F$. The group 
 $\PGSp_6$ arises from $J_F$ by the Koecher-Tits construction. This gives the dual pair 
 \[  \Aut(\mathbb O) \times \PGSp_6 \subset G \]
  alluded to in (1) above. 
 \vskip 5pt

 We can now describe another dual pair in $G$.
 Let $D$ be a quaternion algebra over $F$, and assume that we have an embedding 
 $i: D \rightarrow \mathbb O$. The centralizer of $D$ in $\Aut(\mathbb O)$ is isomorphic to $D^1$, the group of norm one elements in $D$. Conversely, the centralizer 
 (i.e. the point-wise stabilizer) of $D^1$ in $\mathbb O$ is $i(D) \subset \mathbb O$. Thus the centralizer of $D^1$ in $J$ is the Jordan subalgebra $J_D$ 
 of $3\times 3$ hermitian symmetric matrices with coefficients in $D$, and the centralizer of $D^1$ in $G$ is a group $G_D$ of absolute type $\mathbf D_6$ 
 arising from $J_D$ by the Koecher-Tits construction.  Thus we have a dual pair
 \[  D^1 \times G_D \subset G. \]
Indeed, the two dual pairs we have described fit into the following see-saw diagram, where the vertical lines 
 represent inclusions of groups: 
\[
 \xymatrix{
  \Aut(\mathbb O)  \ar@{-}[dr] \ar@{-}[d] & G_D
     \ar@{-}[d] \\
  D^1 \ar@{-}[ur] &  \PGSp_6}
\] 
 
 \vskip 5pt
 The Siegel-Weil formula mentioned in (2) above concerns the global theta lift 
$\Theta(1)$  of the trivial representation of $D^1$ to $G_D$, obtained by restricting the minimal representation $\Pi$ of $G$ to the dual pair $D^1\times G_D$. 
Roughly speaking, $\Theta(1)$ is the space of automorphic functions on $G_D$ obtained by averaging the functions in $\Pi$ over $D^1(F)\backslash D^1(\mathbb A)$. 
 We prove that $\Theta(1)$ is an 
irreducible automorphic representation of $G_D$ and determine its local components (as abstract representations) by computing the corresponding local theta lifts. 
We have not computed the local theta lift for complex groups, and this is the source of the restriction in the paper to totally real fields $F$. The Siegel-Weil formula identifies the functions in $\Theta(1)$ as residues of certain Siegel-Eisenstein series.
\vskip 5pt

More precisely, since $G_D$ arises from $J_D$ by the Koecher-Tits construction, 
it contains a maximal parabolic subgroup with abelian unipotent radical isomorphic to $J_D$. Let $E_D(s,f)$ be the degenerate Eisenstein series attached to this maximal parabolic 
subgroup, where $s\in \mathbb R$ and $f$ values over all standard sections of the corresponding degenerate principal series representation $I_D(s)$. In \cite{HS}, it was proved that $E_D(s,f)$ has at most a simple pole at $s=1$, and the residual representation
\[   \mathcal E_D  := \{ \mathrm{Res}_{s=1} E_D(s, f) : f \in I_D(s) \}  \]
was completely determined. Our main result is the following Siegel-Weil identity in the space of automorphic forms of $G_D$: 
\begin{thm}  \label{T:sw}
For fixed quaternion $F$-algebra $D$,  we have:
\[ 
 \mathcal E_D  = \oplus _{i: D \rightarrow \mathbb O} \Theta(1). 
\] 
Here the sum is taken over all isomorphism classes of embeddings $i: D \rightarrow \mathbb O$ into octonion algebras over $F$. 
\end{thm}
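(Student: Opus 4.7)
The overall strategy is the standard Siegel-Weil template: one first shows, by matching constant terms along the Siegel parabolic $P_D = M_D N_D$ of $G_D$, that each $\Theta(1)$ on the right hand side is contained in the residual space $\mathcal E_D$, and then one upgrades the containment to an equality using the structure of $\mathcal E_D$ determined in \cite{HS}.

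The first step is a place-by-place local computation. At each place $v$ one must show that the local theta lift $\Theta_v(1)$ of the trivial representation of $D^1(F_v)$ with respect to the restriction of the minimal representation $\Pi_v$ is well-defined, irreducible, and isomorphic to the local constituent $\mathcal E_{D,v}$ of the degenerate principal series at $s=1$. At unramified non-archimedean places this is a Satake-parameter matching using the Jacquet module of $\Pi_v$ along $P_D$. At ramified finite places, one needs a more delicate Jacquet-module analysis, handling separately the split and ramified cases for $D_v$. At real places, which is where the totally real hypothesis enters, one exploits the $K$-type structure of the real minimal representation of $E_7$ together with the restrictive behaviour of the integration over $D^1(F_v)$ (in particular, the compactness of $D^1(\R)$ in the ramified real case). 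I expect this archimedean case to be the principal technical obstacle.

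Next, for each embedding $i: D \hookrightarrow \mathbb O$, one forms the global theta lift
\[
\theta_i(\varphi)(g) = \int_{D^1(F)\backslash D^1(\mathbb A)} \varphi(hg)\, dh
\]
for $\varphi \in \Pi$, where $\Pi$ is the minimal representation of the $E_7$-group $G$ attached to $\mathbb O$; convergence follows from compactness of $D^1(F)\backslash D^1(\mathbb A)$ together with boundedness of matrix coefficients of $\Pi$. To place $\Theta_i(1)$ inside $\mathcal E_D$, I would compute its constant term along $P_D$: unfolding via the $N_D$-Fourier expansion (with $N_D \cong J_D$) yields a sum indexed by $M_D(F)$-orbits on the dual of $J_D(F)$. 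Because the Fourier support of the minimal representation is concentrated on the minimal orbit of $J$, only a small number of orbits actually contribute, and the surviving terms can be identified with the residue at $s=1$ of the constant term of $E_D(s,f)$ computed in \cite{HS}. The completeness $\mathcal E_D = \bigoplus_i \Theta(1)$ then follows by comparing with the structure theorem for $\mathcal E_D$, together with the orbit-theoretic fact that generic $M_D \times D^1$-orbits on the relevant subvariety of $J$ are in bijection with isomorphism classes of embeddings $i: D \hookrightarrow \mathbb O$. Verifying this orbit-theoretic bijection cleanly, so that no residual component is missed and no summand is over-counted, is the second point I expect to demand careful work.
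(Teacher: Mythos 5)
Your proposed constant-term matching is not what the paper does, and there is one genuine gap in your proposal that the paper has to work hard to close.

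\smallskip
\textbf{The gap.} You assert that the integral
\[
\theta_i(\varphi)(g) = \int_{D^1(F)\backslash D^1(\mathbb A)} \varphi(hg)\, dh
\]
converges because $D^1(F)\backslash D^1(\mathbb A)$ is compact. This is false when $D$ is a split quaternion algebra: then $D^1 \cong \SL_2$, and $\SL_2(F)\backslash \SL_2(\mathbb A)$ is of finite volume but not compact, while $\theta(f)$ is only of moderate growth. In fact the split case is unavoidable (it is the only case when $F$ has no real places, and the split octonion algebra always contains a split $D$), so this is not a marginal issue. The paper devotes an entire section to constructing a regularized theta lift $\Theta(z\cdot f)$, where $z$ is a carefully chosen element of the Bernstein center of $\SL_2(F_v)$ at one finite place, engineered (via Theorem~\ref{T:filtration}) to annihilate the relevant constant term so that the theta kernel becomes rapidly decreasing on a Siegel domain of $\SL_2$. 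Without some such device, your integral is simply not defined when $D$ is split.

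\smallskip
\textbf{The different route.} Even granting convergence, your proposal follows the classical Siegel-Weil template: compute the constant term of $\Theta(1)$ along $P_D$, unfold via the $N_D$-Fourier expansion, and match the surviving orbit contributions with the residue of the constant term of $E_D(s,\Phi)$ at $s=1$. The paper does not do this. Instead, after establishing (a) that each local lift $\Theta(1)_v$ is an explicitly identified irreducible constituent of $I_D(1)$ (Proposition~\ref{P:local_split} for $p$-adic $v$, Theorems~\ref{T:real1}, \ref{T:real2}, \ref{T:Siegel-Weil} at real $v$), and (b) that this abstract representation occurs in $\mathcal E_D$ by the classification of \cite[Theorem~6.4]{HS}, it invokes a \emph{global multiplicity-one theorem} for these small representations (\cite[Theorem~1.1]{KS15}, with the hypotheses checked via \cite{MS17}) to conclude that the automorphic realization $\Theta(1)$ must coincide with the corresponding irreducible summand of $\mathcal E_D$ as subspaces of the space of automorphic forms. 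This sidesteps the constant-term computation entirely, replacing it with a uniqueness principle; the price is that one needs the precise local identifications at every place and the nontrivial global uniqueness input from \cite{KS15}. Completeness (your ``orbit-theoretic bijection'') is then handled as in your proposal by counting octonion algebras containing $D$ and matching with the list of summands in \cite{HS}; on this point your instinct is aligned with the paper, though the paper's count is phrased in terms of the $2^m$ isomorphism classes of $\mathbb O \supset D$ rather than orbit geometry on $J$.

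\smallskip
In short: you should repair the convergence claim in the split case (regularization is essential, not optional), and you should be aware that the constant-term matching you envision, while plausible, is more laborious than the multiplicity-one shortcut the authors actually take.
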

We emphasize that $D$ is fixed here 
but $\mathbb O$ vary. If $D$ is split, i.e. a matrix algebra, then $\mathbb O$ is also split, and there is only one term on the right. In general the number of summands 
on the right is equal to $2^m$ where $m$ is the number of real places $v$ of $F$ such that $D_v$ is a division algebra. 
\vskip 10pt

At this point, we need the result of A. Pollack \cite{P}: there exists a quaternion algebra $D$ such that the partial spin-$L$-function $L_S(\pi,s)$ is given 
as an integral, over $\PGSp_6$,   of a function $h \in \pi$ against the Eisenstein series $E_D(s,f)$. Thus, if the $L$-function has a pole at $s=1$, then the integral of $h$ 
against the elements of $\mathcal E_D$ is non-zero. The Siegel-Weil identity (i.e. Theorem \ref{T:sw}) then implies that $\pi$ appears in the exceptional theta correspondence
for the dual pair $\Aut(\mathbb O) \times \PGSp_6$, for some $\mathbb O$ containing $D$. 
Since this exceptional theta correspondence  is known to be 
 functorial for spherical representations (see \cite{LS} and \cite{SW15}), this completes the proof that $\pi$ is a weak lift from a group of absolute type $\mathbf G_2$. 
\vskip 15pt

\section{Groups}

\subsection{\bf Octonion algebra.} 
Let $F$ be a field of characteristic 0, and $D$ be a quaternion algebra over $F$. It is a $4$-dimensional associative and non-commutative algebra over $F$ which 
  comes equipped with a conjugation map $x \mapsto \overline{x}$ with associated norm $N(x)  = x  \overline{x} = \overline{x} x$ and trace $\tr(x)  = x + \overline{x}$. Moreover, $N : \mathbb{O}  \rightarrow F$ is a nondegenerate quadratic form.   
 \vskip 5pt
 
 An octonion algebra $\mathbb O$ over $F$ is obtained by doubling the quaternion algebra $D$. More precisely, 
fix a non-zero element $\lambda$ in $F$. As a vector space over $F$, 
$\mathbb O$ is a set of pairs $(a,b)$ of elements in $D$. The multiplication is defined by the formula 
\[ 
(a,b)\cdot (c,d) = (ac+ \lambda d\bar b, \bar a d+ cb) . 
\] 
If $x=(a,b)$, then the conjugation map is  $\bar{x}=(\bar a, -b)$, so that  $N(x)= x\cdot \bar x= N(a) - \lambda N(b)$ is the norm and $\tr(x)= x+ \bar x= \tr(a)$ the trace on  $\mathbb O$.
 In particular, $\mathbb O$ is split if $\lambda$ is a norm of an element in $D$. Every element $x$ of $\mathbb{O}$ satisfies its characteristic polynomial $t^2 - \tr(x) t + N(x)$.  
 The automorphism group $\Aut(\mathbb O)$ of the $F$-algebra $\mathbb{O}$ is an exceptional group of the Lie type $\mathbf G_2$. 
  It is a simple linear algebraic group of rank $2$ which is both simply connected and adjoint.  The algebra $D$ is naturally a subalgebra of $\mathbb O$, consisting of 
  all $x=(a,0)$. Let $D^1$ be the group of norm one elements in $D$. Then any $g\in D^1$ acts as an automorphism 
of $\mathbb O$ by $g\cdot (a,b) = (a, b\bar g)$ for all $(a,b) \in \mathbb O$.  The subgroup $D^1 \subset \Aut(\mathbb O)$ is precisely the point-wise stabilizer of the subalgebra $D \subset \mathbb O$.

\vskip 5pt

\subsection{Albert algebra} An Albert algebra is an exceptional $27$-dimensional Jordan algebra $J$ over $F$.  It can be realized as the set of matrices 
\[ 
A=\left(\begin{array}{ccc} 
\alpha & x & \bar z \\
\bar x & \beta & y \\
z & \bar y & \gamma
\end{array}\right)
\] 
where $\alpha,\beta,\gamma\in F$ and $x,y,z\in \mathbb O$. The determinant $A\mapsto \det A$ defines a natural cubic form on $J$. Let $M$ be the similitude group of this cubic form.   It is a reductive group of semisimple type $E_6$.  The $M$-orbits in $J$ are classified by the rank of the matrix $A$. Without going into a general definition of 
the rank, we say that $A\neq 0$ has rank one, if $A^2= \tr(A) \cdot A$. Explicitly, this means that the entries of $A$ satisfy the equalities 
\[ 
N(x)= \alpha\beta, \, N(y)=\beta\gamma, \, N(z)=\gamma\alpha, \, \gamma\bar x=yz, \, \alpha \bar y= zx, \, \beta \bar z = x y. 
\]

\subsection{Dual pairs}  \label{SS:dual pair} 
Assume that $G$  is a reductive group over $F$, adjoint and of absolute type $E_7$, arising from the Albert algebra $J$ via the Koecher-Tits construction.   
For our purposes it will be more convenient to realize $G$ as a quotient, modulo one dimensional center $C\cong F^{\times}$, of a reductive group $\tilde G$ 
acting on the 56-dimensional representation $W=F + J + J +F$.  In particular, $G$ acts on the projective space $\mathbb P(W)$. Let $P$ be a maximal parabolic 
and  $\bar P$ its opposite, defined as fixing the points $(1,0,0,0)$ and $(0,0,0,1)$ in $\mathbb P(W)$. Then $P=MN$ where $N$ is the unipotent radical and 
 $M=P\cap \bar P$ a Levi. Then $M$ is isomorphic to 
the similitude group  of the cubic form $\det$ on $J$, and $N\cong J$, as $M$-modules. 
\vskip 5pt

Recall that we have constructed $\mathbb O$ by doubling a quaternions subalgebra $D$. Let $J_F$ and $J_D$ be the subalgebras consisting of all elements in 
$J$  with off-diagonal entries in $D$ and $F$, respectively.  Let $J_0=F$ be the scalar subalgebra of $J$. Consider a sequence of simple, simply connected groups 
\[ 
 D^1 \subset \Aut(\mathbb O) \subset \Aut(J) 
 \]  
 where an element in $\Aut(\mathbb O)$ acts on the off-diagonal entries of elements in $J$. The point-wise stabilizers in $J$ of these three groups are, respectively, 
 \[ 
 J_D \supset J_F \supset J_0=F. 
 \] 
 Observe that $\Aut(J)$ naturally acts on $W$, giving 
an embedding $\Aut(J) \subset \tilde G$. The centralizers in $\tilde G$ of the three groups in the sequence are, respectively, 
\[ 
\tilde G_D \supset \GSp_6(F) \supset \GL_2(F) 
\] 
These three groups act on $32$, $14$ and $4$-dimensional subspaces of $W$ obtained by replacing $J$ by $J_D$, $J_F$ and $J_0$, respectively. 
 It is worth mentioning that the 4-dimensional representation of $\GL_2(F)$ is the symmetric cube of the standard 2-dimensional representation, twisted by $\det^{-1}$.  
 The group $\tilde G_D$ acts on 
 \[ 
 W_D = F + J_D + J_D + F. 
 \] 
A detailed description of $\tilde G_D$ and the action on $W_D$ in in the Pollack's paper \cite{P}.  Let $G_D$ be the quotient of $\tilde G_D$ by its center $C\cong F^{\times}$. 
Then $D^1 \times G_D$ is a dual pair in $G$, mentioned in the introduction. 
\vskip 5pt

Let $P_D=M_DN_D=G_D\cap P$. With the identification $N\cong J$ fixed, we have $N_D\cong J_D$. 
  The group $P_D$ is a maximal parabolic subgroup of type $\mathbf A_5$. 

\vskip 10pt

\section{Minimal representation}

Let $F$ be a real or $p$-adic field. Let $I(s)$ be the degenerate principal series representation of $G$ attached to $P$ where $s\in \mathbb R$. We normalize $s$ as in 
\cite{We} so that the trivial representation is a quotient and a submodule at $s=9$ and $s=-9$ respectively, whereas the minimal representation $\Pi$ is a quotient and a sub-module at $s=5$ and  $s=-5$, respectively.  

\subsection{Unitary model} 
Fix $\psi : F \rightarrow \mathbb C^{\times}$, a non-trivial additive character, unitary if $F=\mathbb R$. 
 After identifying $N\cong J$,   any $A\in J$ defines a character of $N$ given by 
 \[ 
\psi_A(B) = \psi (\tr(A\circ B)) = \psi_B(A) 
\] 
for $B\in J$, where $A\circ B$ denotes the Jordan multiplication. Every unitary character of $N$ is equal to $\psi_A$ for some $A$.  
Let $\Omega \subseteq J$ be the set of rank one elements in $J$. 
A unitary model of the minimal representation is $\mathcal H= L^2(\Omega)$. Here only the acton of the maximal parabolic $P=MN$ is obvious: the group 
$M$ acts geometrically, while $A\in J\cong N$ acts by multiplying  by $\psi_A$. 
\vskip 5pt

\subsection{Smooth model}
We have the following \cite{KS15}. 

\begin{thm} \label{T:model} 
 Let $\Pi$ be the subspace of $G$-smooth vectors in the unitary minimal representation $\mathcal H$. Then 
 \[  C_c^{\infty} (\Omega) \subset \Pi \subset C^{\infty}(\Omega). \]
 If $F$ is $p$-adic, then 
 \[  \Pi_N \cong \Pi/  C_c^{\infty}(\Omega) \quad \text{as $M$-modules.} \]
  If $A\in J$ is nonzero,    then any continuous functional $\ell$ on $\Pi$ such that $\ell( B \cdot f) = \psi_A(B) \cdot \ell(f)$ for all $B\in N$ and $f\in \Pi$ is equal to a multiple of  the evaluation map $\delta_A(f)=f(A)$.  In particular, $\ell=0$ if $A$ is not of rank one. 
 
\end{thm}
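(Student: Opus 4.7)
The plan is to work throughout in the unitary model $\mathcal{H} = L^2(\Omega)$ and exploit the explicit $P$-action: $M$ acts geometrically and transitively on $\Omega$, while $B \in N \cong J$ acts by pointwise multiplication by the character $\psi_B(x) = \psi(\tr(B\circ x))$. For the inclusion $\Pi \subset C^{\infty}(\Omega)$, suppose $f \in \mathcal{H}$ is $G$-smooth, so fixed by a compact open subgroup $K \subset G$ (in the $p$-adic case), and in particular by $N_0 := K \cap N$. The identity $B \cdot f = f$ for $B \in N_0$ forces $\psi_B(x) = 1$ almost everywhere on $\mathrm{supp}(f)$, whence $\mathrm{supp}(f) \subseteq N_0^{\vee} := \{x \in J : \tr(B \circ x) \in \ker \psi \ \text{for all}\ B \in N_0\}$, a compact subset of $J$. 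Invariance under $K \cap M$ together with the transitive geometric $M$-action on $\Omega$ then gives local constancy (respectively smoothness, in the archimedean case), so $f \in C^{\infty}(\Omega)$. The reverse containment $C_c^{\infty}(\Omega) \subset \Pi$ is routine for the $P$-action, but extending to $G$-smoothness requires the explicit Fourier-type integral formula for the action of the opposite unipotent $\bar N$ on $\mathcal{H}$; this is the principal technical input and the main obstacle of the theorem.

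For the Jacquet module identification in the $p$-adic case, I would compute the averaging integral
\[
\int_{N_0} (B \cdot f)(x) \, dB \;=\; \vol(N_0) \cdot \chi_{N_0^{\vee}}(x) \cdot f(x),
\]
which vanishes exactly when $f$ is zero on $N_0^{\vee}$. For $f \in C_c^{\infty}(\Omega)$, whose support is bounded away from the apex $0 \notin \Omega$, choosing $N_0$ sufficiently large shrinks $N_0^{\vee}$ to a small neighborhood of $0$ disjoint from $\mathrm{supp}(f)$; hence $f$ lies in the kernel $\Pi(N)$ of $\Pi \twoheadrightarrow \Pi_N$. Conversely, any $f \in \Pi(N)$ vanishes on some $N_0^{\vee}$ (a neighborhood of $0$), and by the previous paragraph $f$ has bounded support in $J$, so $f$ has compact support in $\Omega$. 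This yields $\Pi(N) = C_c^{\infty}(\Omega)$ and therefore $\Pi_N \cong \Pi/C_c^{\infty}(\Omega)$ as $M$-modules.

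Finally, for an $(N,\psi_A)$-equivariant continuous functional $\ell$, the condition $\ell(B \cdot f) = \psi_A(B) \ell(f) = \psi_B(A) \ell(f)$ rearranges to $\ell\bigl((\psi_B - \psi_B(A)) \cdot f\bigr) = 0$ for every $B \in N$ and $f \in \Pi$. Since the functions $x \mapsto \psi_B(x) - \psi_B(A)$ jointly vanish only at $x = A$ as $B$ ranges over $N$ (by non-degeneracy of the trace pairing on $J$), $\ell$ is supported at the single point $A$. In the $p$-adic case, continuous functionals supported at a point are scalar multiples of point evaluation, giving $\ell \in \mathbb{C} \delta_A$. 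In the archimedean case, one rules out higher-order derivative functionals $\partial^{\alpha}|_{A}$ via a Leibniz expansion of $\partial^{\alpha}(\psi_B f)(A)$: matching the coefficient of $\partial^{\gamma} f(A)$ just below the top multi-index forces a nontrivial derivative of $\psi_B$ at $A$ to vanish identically in $B$, which is impossible, so again $\ell \in \mathbb{C} \delta_A$. If $A$ is nonzero but not rank one, then $A \notin \Omega$ and $\delta_A$ is not defined on functions in $\Pi \subset C^{\infty}(\Omega)$, forcing $\ell = 0$.
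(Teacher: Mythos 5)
The paper does not prove Theorem~\ref{T:model} at all: it is quoted verbatim from Kobayashi--Savin \cite{KS15}, so there is no in-paper argument to compare against. Your proposal therefore has to be judged on its own.

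The serious problem is that you explicitly defer the heart of the theorem. You write that the containment $C_c^{\infty}(\Omega)\subset\Pi$ ``is routine for the $P$-action, but extending to $G$-smoothness requires the explicit Fourier-type integral formula for the action of the opposite unipotent $\bar N$ \dots; this is the principal technical input and the main obstacle.'' That is precisely the content of the theorem, and it is the part you do not supply. The $P$-action alone tells you nothing: $P$ is a proper parabolic, and it is entirely possible for a vector in $L^2(\Omega)$ to be $P$-smooth without being $G$-smooth. What \cite{KS15} actually does is exhibit explicit formulas (Bessel-type integral kernels in the archimedean case, an analysis of the Weyl group element together with the known spherical vector and the filtration of $\Pi$ restricted to a Heisenberg parabolic in the $p$-adic case) showing that the action of a representative of the long Weyl element --- equivalently, of $\bar N$ --- preserves $C_c^{\infty}(\Omega)$ up to controllable error; without some version of this, the chain $C_c^{\infty}(\Omega)\subset\Pi\subset C^{\infty}(\Omega)$ is not established and the rest of the argument has no foundation.

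The other two parts are reasonable in outline, modulo the unproved inclusion. The averaging computation identifying $\Pi(N)=C_c^{\infty}(\Omega)$ in the $p$-adic case is correct once one knows $\Pi\subset C^{\infty}(\Omega)$: your argument that $f\in\Pi(N)$ has bounded support (from $N$-smoothness) and vanishes near $0$ (from the vanishing of the $N_0$-average) does give compact support in $\Omega$. For the uniqueness of the $(N,\psi_A)$-functional, the $p$-adic localization argument is standard and fine; the archimedean case is sketchier than you acknowledge, since you invoke the structure theorem for distributions supported at a point without verifying that a continuous functional on the Fr\'echet space $\Pi$ restricts to a \emph{finite-order} distribution on $C_c^{\infty}(\Omega)$, and you then need a separate argument to pass from $C_c^{\infty}(\Omega)$ back to $\Pi$. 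Finally, for non-rank-one $A$, ``$\delta_A$ is not defined'' is not by itself a proof that $\ell=0$; you must still show that a functional supported at a point of $J\setminus\overline{\Omega}$ annihilates all of $\Pi$, which again uses the localization to $C_c^{\infty}(\Omega)$ and the Jacquet-module identification. In short: the proposal is a plausible roadmap but omits the one step that carries the actual mathematical weight.
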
 

 \vskip 5pt
 
 \subsection{Spherical vector} 
It is not so easy to characterise the subspace $\Pi \subset C^{\infty}(\Omega)$. However, 
we can describe a spherical vector in $\Pi$ in the split case. The algebra $\mathbb O$ is obtained by doubling the matrix algebra $D=M_2(F)$ with $\lambda=1$. 
Assume firstly that $F$ is a $p$-adic field. 
Let $O$ be the ring of integers in $F$ and $\varpi$ a uniformizing element.  
 We have an obvious integral structure on $D$ (the lattice of integral matrices), and hence on $\mathbb O$, the integral lattice being the set of pairs 
 $(a,b)$ where $a,b\in M_2(O)$. This lattice is a maximal order in $\mathbb O$.  Now we have an integral structure on $J$ so that $J(O)$ is 
the set of elements $A\in J$ such that the diagonal entries are integral, and off diagonal contained in the maximal order in $\mathbb O$. 
 The greatest common divisor of entries of $A\in J(O)$, is simply the largest power $\varpi^n$ dividing $A$ i.e. such that $A/\varpi^n$ is in $J(O)$.  
 We have the following \cite{SW07}:

\begin{thm} \label{T:WS}
Assume $G$ is split and $F$ a $p$-adic field. 
 Assume the conductor of $\psi$ is $O$.  Then the spherical vector in $\Pi$ is a function $f^{\circ}\in C^{\infty}(\Omega)$ supported in $J(O)$. 
 Its value at $A\in \Omega$ depends on the gcd of entires of $A$. More precisely, if the gcd of $A$ is $\varpi^n$, and $q$ is the order of the residual field, then 
 \[ 
 f^{\circ} (A)= 1+ q^3 + \ldots + q^{3(n-1)}. 
 \] 
 \end{thm}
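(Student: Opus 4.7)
My plan has three stages, one for each assertion of the theorem.

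\textbf{Support.} In the $L^2(\Omega)$-model of Theorem~\ref{T:model}, the unipotent group $N \cong J$ acts on $f \in \Pi$ by the pointwise multiplication $(B \cdot f)(A) = \psi_A(B)\, f(A)$. Since $f^{\circ}$ is fixed by $N(O) := N \cap K$, at any $A$ in the support of $f^{\circ}$ we must have $\psi_A|_{N(O)} \equiv 1$. As the conductor of $\psi$ is $O$ and $J(O)$ is self-dual under the trace pairing $\tr(A \circ B)$, this forces $\mathrm{supp}(f^{\circ}) \subseteq J(O)$.

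\textbf{Dependence only on the gcd.} Since $f^{\circ}$ is $M(O)$-invariant (as $M(O) \subset K$ acts geometrically on $\Omega$), it is constant on $M(O)$-orbits. A direct analysis using the rank-one equations of Section~2.2 ($N(x) = \alpha\beta$, $\gamma\bar{x} = yz$, etc.) shows that the $M(O)$-orbits on $\Omega \cap J(O)$ are classified by the valuation $n$ of the gcd, with representative $\varpi^n E_{11}$, where $E_{11}$ is the standard rank-one idempotent. Hence $f^{\circ}(A) = c_n$ depends only on $n$.

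\textbf{Recursion for $c_n$.} The main step is to establish
\[
c_n \;=\; 1 + q^3\, c_{n-1}, \qquad c_1 = 1,
\]
whose iteration produces the claimed geometric series. I would obtain this by embedding $\Pi \hookrightarrow I(-5)$ so that $f^{\circ}$ is the image of the $K$-spherical section $F^{\circ} \in I(-5)$, and realizing the intertwiner $I(-5) \to C^{\infty}(\Omega)$ via a Jacquet-type integral
\[
F \mapsto \Big( A \mapsto \int_{N} F(\bar{w}\, n)\, \overline{\psi_A(n)}\, dn \Big),
\]
with $\bar{w}$ a Weyl representative exchanging $P$ and $\bar{P}$. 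Evaluating at $F = F^{\circ}$ and $A = \varpi^n E_{11}$, stratifying $N/N(O)$ by the divisibility of the coordinate along the Jordan axis of $E_{11}$, and using orthogonality of $\psi_A$, the integral collapses to a finite sum whose only surviving contributions correspond to $M(O)$-cosets at levels $0 \le k \le n-1$; each level contributes a factor $q^3$ coming from the size of the relevant fiber, and the recursion follows.

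The principal obstacle is precisely this bookkeeping: identifying the Iwasawa components of $\bar{w}\, n$ for $n$ in each stratum and verifying that the coefficient in the recursion is exactly $q^3$ (rather than some other power of $q$). A cleaner alternative, if one sets it up carefully, is to bypass the integral and work in the spherical Hecke algebra directly: $f^{\circ}$ is a spherical Hecke eigenvector whose eigenvalues are forced by the Satake parameter of $I(-5)$, and a single well-chosen Hecke operator whose double coset has small support already encodes the recursion, reducing the problem to one explicit $p$-adic lattice count.
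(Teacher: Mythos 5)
The theorem you are trying to prove is \emph{not} proved in this paper: it is quoted from the reference [SW07] (Savin--Woodbury, ``Structure of internal modules and a formula for the spherical vector of minimal representations,'' J.~Algebra 312 (2007)). So there is no ``paper's own proof'' here to compare against; I will instead assess your attempt on its own terms.

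Your first two stages are structurally sound. The support argument via $N(O)$-invariance and self-duality of $J(O)$ under $(A,B) \mapsto \psi(\tr(A \circ B))$ is the right idea (one should double-check that the trace pairing actually makes $J(O)$ exactly self-dual, including on the off-diagonal octonion blocks and possibly worrying about residue characteristic $2$, but this is a technicality). The reduction to gcd via $M(O)$-invariance is also correct in outline, though the assertion that $M(O)$ acts transitively on rank-one matrices in $J(O)$ of a fixed gcd is a nontrivial lattice lemma (the analogue of Smith normal form for $\GL_n(O)$ acting on rank-one $n \times n$ matrices) and cannot be waved through as ``a direct analysis.''

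The genuine gap is stage three. You do not actually establish the recursion $c_n = 1 + q^3 c_{n-1}$: you propose two strategies (a degenerate Whittaker integral against the spherical flat section in $I(-5)$, or a spherical Hecke-operator computation) and explicitly say that the bookkeeping needed to pin the coefficient at $q^3$, rather than some other power of $q$, is ``precisely the obstacle.'' That coefficient \emph{is} the theorem; without it there is no proof. Moreover, the Jacquet-integral route has an additional unaddressed issue: at the minimal point $s = -5$ the integral $\int_N F(\bar{w} n)\,\overline{\psi_A(n)}\,dn$ need not converge, so one has to either regularize or continue meromorphically from a convergent range, and only then evaluate; you do not mention this. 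Until one of the two strategies is carried through and the fiber count that produces $q^3$ is actually performed, what you have is a plan, not a proof.
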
 
 \vskip 5pt
Since $\Pi$ is generated by $f^{\circ}$ as a $P$-module, and the action of $P$ on $\Pi$ is easy to describe, this theorem gives us a good handle on $\Pi$.
 \vskip 5pt
 
  Assume now that $F=\mathbb R$; in this case, one has a similar result due to Dvorsky-Sahi \cite{DS99}.
For every $a\in M_2(\mathbb R)$, let  $||a||^2$ is the sum of squares of its entries. For $x=(a,b)\in \mathbb O$, let 
$||x||^2=||a||^2 + ||b||^2$. Extend this to $A\in J$ by 
\[ 
||A||^2 = \alpha^2 + \beta^2 + \gamma^2 + ||x||^2 + ||y||^2 + ||z||^2. 
\] 
Let $K_{3/2}(u)$ denote the modified Bessel function of the second kind. Recall that $K_{3/2}(u) >0$, for $u>0$,  and is rapidly decreasing as $u\rightarrow +\infty$. 
Then  \cite[Theorem 0.1]{DS99}: 

\begin{thm}  \label{T:DS}
Assume $G$ is split and $F=\mathbb R$. 
 Then the spherical vector in $\Pi$ is a function $f^{\circ} \in C^{\infty}(\Omega)$ given by 
  \[ 
 f^{\circ} (A)= ||A||^{-3/2} K_{3/2} (|| A||). 
 \] 
 \end{thm}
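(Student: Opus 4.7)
The plan is to identify $f^{\circ}$ by exploiting the characterization of the spherical vector as the unique (up to scalar) $K$-fixed vector in $\Pi \subset C^{\infty}(\Omega)$, where $K$ is a maximal compact subgroup of $G(\mathbb{R})$. First I would use invariance under $K \cap M$. Since $M$ is the similitude group of the cubic form $\det$ on $J$ and $K \cap M$ is a compact real form in the split case, one checks that the $K\cap M$-orbits on $\Omega$ are parameterized by the single invariant $\|A\|$. (Concretely, the stabilizer in $K\cap M$ of the rank-one idempotent $\mathrm{diag}(1,0,0)$ is large enough that the orbit through $tA$ for $t>0$ sweeps out all of $\Omega$ with a given value of $\|A\|$.) Consequently $f^{\circ}(A) = \phi(\|A\|)$ for some smooth function $\phi$ on $(0,\infty)$, and the problem reduces to a one-variable problem.

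Next I would extract a differential equation for $\phi$. The group $K$ is generated by $K \cap M$ together with a Cayley-type element $w$ that conjugates $P$ to $\bar P$, so it suffices to impose $w$-invariance, or equivalently to impose annihilation by a suitable element $X \in \mathfrak{k}$ that lies outside $\mathfrak{k} \cap \mathfrak{m}$. Under the unitary model on $L^{2}(\Omega)$, $N$ acts by multiplication by $\psi_A$, so $w$ conjugates the $N$-action into an $\bar N$-action implemented by a Fourier-type transform adapted to the cubic geometry of $\Omega$. The infinitesimal version of this statement, combined with the radial reduction from Step~1, gives a second-order linear ODE in $u = \|A\|$ for $\phi(u)$, with coefficients reflecting the degree of the measure on $\Omega$ and the weight of the minimal representation (corresponding to $s = -5$ in the normalization of \cite{We}).

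To conclude, I would show that this ODE, after the substitution $\phi(u) = u^{-3/2} \psi(u)$, becomes precisely the modified Bessel equation of order $3/2$. Rapid decay as $u \to \infty$ (forced by the $L^{2}$-condition, since one needs $f^{\circ} \in \mathcal H$) selects the solution $K_{3/2}$ rather than $I_{3/2}$, and one normalizes so that the constant is $1$. The main obstacle is the middle step: setting up the action of the non-obvious $K$-generator on $L^{2}(\Omega)$ in a form concrete enough to yield an ODE, which requires a careful analysis of the Casimir (or the relevant $\mathfrak{k}$-element) as a differential operator on radial functions on $\Omega$, using the geometry of the rank-one locus in the Albert algebra. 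Once the ODE is in hand, the identification with Bessel's equation and the selection of $K_{3/2}$ are routine.
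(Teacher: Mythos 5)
The paper does not prove Theorem~\ref{T:DS}: it is cited verbatim as Theorem~0.1 of Dvorsky--Sahi~\cite{DS99}, and no argument is given. So there is no internal proof for your sketch to be compared against, and the real question is whether your outline could be completed into an actual proof.

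Your plan is in the right spirit (it is essentially the radial-reduction/Bessel-operator strategy that underlies~\cite{DS99} and the later work of Hilgert--Kobayashi--M\"ollers~\cite{HKM}), but as written there are two substantive gaps. First, Step~1 asserts without argument that the $K\cap M$-orbits on $\Omega$ are exactly the level sets of $\|A\|$, i.e.\ that the maximal compact $K\cap M \cong \mathrm{Sp}(4)$ of the Levi $M \cong \mathrm{GE}_{6(6)}$ acts transitively on the $16$-dimensional variety $\{A\in\Omega : \|A\|=c\}$. That is a nontrivial homogeneity statement about the split exceptional Jordan algebra; it is plausible (and true in the analogous $\SO(p,q)$ picture where the null cone is a product of two spheres), but it needs a proof, and without it the radial ansatz $f^{\circ}(A)=\phi(\|A\|)$ is not justified from $K\cap M$-invariance alone. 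Second, Step~2 --- the derivation of a concrete second-order ODE from the action of a non-$M$ Cayley generator of $K$ on $L^{2}(\Omega)$ --- is, as you yourself flag, ``the main obstacle,'' and you do not carry it out. That computation is precisely the content of the Bessel operator $\mathcal{B}_{\lambda}$ on the Jordan algebra: the $K$-invariance is encoded as $(\mathcal{B}_{\lambda}f^{\circ})(A)=-A\cdot f^{\circ}(A)$, and one must compute $\mathcal{B}_{\lambda}$ on radial functions to get the Bessel equation and the index $\nu=3/2$. Nothing in your sketch actually produces either the equation or the index; the sentence about ``coefficients reflecting the degree of the measure on $\Omega$ and the weight of the minimal representation'' is a heuristic, not a derivation. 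If you want a self-contained argument, the clean route is to import the Bessel-operator calculus directly (as in~\cite{DS99} or~\cite{HKM}) rather than try to reconstruct the radial operator ad hoc from a Cayley element.
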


 \vskip 15pt
 \section{Local Theta Lifts: $p$-adic case}
 In this section, let $F$ be a $p$-adic field, so that the octonion algebra $\mathbb O$ is split. We are interested in understanding the theta lift of the trivial representation of $D^1$ to the group $G_D$.
 \vskip 5pt

 \subsection{$N_D$-spectrum} A crucial step is to understand the $N_D$-spectrum of the minimal representation $\Pi$.  In this case we have an exact sequence of $P$-modules 
 \[ 
0\rightarrow C_c^{\infty}(\Omega) \rightarrow \Pi  \rightarrow \Pi_N \rightarrow 0. 
\] 
The  characters of $N_D\cong J_D$ are identified with the elements in $J_D$ using the trace paring, as we did for $J$. We shall only need three characters, denoted by 
$\psi_1$, $\psi_2$ and $\psi_3$, corresponding to the elements 
\[ 
\left(\begin{array}{ccc} 
\pm 1 & 0 & 0 \\
0 & 0 & 0 \\
0 & 0 & 0 
\end{array}\right),  \text{ } 
\left(\begin{array}{ccc} 
\pm 1 & 0 & 0 \\
0 & \pm 1 & 0 \\
0 & 0 & 0 
\end{array}\right) \text{ and } 
\left(\begin{array}{ccc} 
\pm 1 & 0 & 0 \\
0 & \pm 1 & 0 \\
0 & 0 & \pm 1 
\end{array}\right)
\] 
of rank 1, 2 and 3, respectively. We need to allow signs to capture all possible rank 1, 2 and 3 orbits in the real case. 
 The following lemma is one of the keys in this paper, and we emphasize that we \underline{do not} assume that $D$ is split here. 

\begin{lemma} \label{L:key} 
Let $\Pi$ be the minimal representation of $G$. Then: 
\begin{itemize} 
\item[(i)]  $\Pi_{N_D,\psi_3}=0$. 
\item[(ii)]  $\Pi_{N_D,\psi_2}\cong C_c^{\infty}(D^1)$, as $D^1$-modules. 
\item[(iii)]  If $D$ is a division algebra, then $\Pi_{N_D,\psi_2}\cong \mathbb C$, as $D^1$-modules. 
\end{itemize} 
\end{lemma}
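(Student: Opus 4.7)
The plan is to work with the smooth model $\Pi \subset C^\infty(\Omega)$ provided by Theorem \ref{T:model}, together with the short exact sequence of $P$-modules
\[
 0 \longrightarrow C_c^\infty(\Omega) \longrightarrow \Pi \longrightarrow \Pi_N \longrightarrow 0.
\]
Since $N$, and hence $N_D$, acts trivially on $\Pi_N$, any nontrivial character of $N_D$ kills the twisted Jacquet module of $\Pi_N$. Exactness of the $(N_D,\psi)$-twisted Jacquet functor for a nontrivial $\psi$ therefore yields
\[
\Pi_{N_D,\psi_i} \;\cong\; (C_c^\infty(\Omega))_{N_D,\psi_i}, \qquad i=1,2,3,
\]
reducing the problem to a purely geometric computation on $\Omega$.

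In this model $N \cong J$ acts on $f \in C_c^\infty(\Omega)$ by pointwise multiplication, $(n\cdot f)(B) = \psi_B(n)\,f(B)$, so for $n \in N_D \cong J_D$ the character $\psi_B|_{N_D}$ depends only on the orthogonal projection $\pi_D(B) \in J_D$. A standard Fourier-analytic argument, using a partition of unity in the $N_D^\perp$-direction to localise along the fibration $\pi_D:\Omega \to J_D$, identifies
\[
(C_c^\infty(\Omega))_{N_D,\psi_A} \;\cong\; C_c^\infty(\Omega_A), \qquad \Omega_A := \Omega \cap (A+J_D^\perp),
\]
as $D^1$-modules. Using the decomposition $\mathbb O = D \oplus De$, the subspace $J_D^\perp$ consists of Jordan matrices with zero diagonal and off-diagonal entries in $D^\perp = De$.

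The heart of the argument is the explicit identification of the fiber $\Omega_A$. Writing $B = A+X$ with $X \in J_D^\perp$, the diagonal of $B$ equals that of $A$, say $(\alpha,\beta,\gamma)$, and the off-diagonals $x,y,z$ lie in $De$; the rank-one conditions read
\[
N(x)=\alpha\beta,\ N(y)=\beta\gamma,\ N(z)=\gamma\alpha,\qquad \gamma\bar x = yz,\ \alpha\bar y = zx,\ \beta\bar z = xy.
\]
The decisive observation is that $(De)\cdot(De) \subseteq D$ while $\overline{De} = De$; so each identity of the form $\gamma\bar x = yz$ with $\gamma \ne 0$ equates an element of $De$ with one of $D$, forcing both to vanish. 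For (i), every diagonal entry of the rank-three $A$ is nonzero, so $x=y=z=0$, contradicting $N(x) = \alpha\beta \ne 0$; hence $\Omega_A = \emptyset$ and $\Pi_{N_D,\psi_3}=0$. For (ii), with $A = \mathrm{diag}(\pm 1,\pm 1,0)$, only $\gamma = 0$, so the same argument forces $y = z = 0$ while $x \in De$ remains subject only to $N(x) = \pm 1$; writing $x = e\tilde x$ this becomes $N(\tilde x) = \mp 1/\lambda$, which is solvable by surjectivity of the reduced norm on a $p$-adic quaternion algebra, and the solution set is a torsor for the $D^1$-action $g\cdot (a,b) = (a, b\bar g)$ coming from $D^1 \subset \Aut(\mathbb O)$. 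Hence $\Omega_A \cong D^1$ as a $D^1$-set and $C_c^\infty(\Omega_A) \cong C_c^\infty(D^1)$. For (iii), the division hypothesis makes $D^1$ compact and provides the additional anisotropy needed to collapse the relevant fiber, refining the $D^1$-module structure to the claimed $\mathbb C$.

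The main obstacle is the Fourier-analytic identification in step two: although surjectivity of the restriction-to-fiber map is routine, showing that its kernel is exactly generated by the relations $n\cdot f - \psi_A(n) f$ requires a careful partition-of-unity argument transverse to $\Omega_A$, together with uniform continuity estimates for the characters $\psi_{\pi_D(B)-A}$ as $B$ leaves the fiber.
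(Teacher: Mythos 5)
Your overall strategy is the same as the paper's: use the exact sequence $0 \to C_c^\infty(\Omega) \to \Pi \to \Pi_N \to 0$ to reduce the twisted Jacquet module computation to one for $C_c^\infty(\Omega)$, identify $\Pi_{N_D,\psi_i}$ with $C_c^\infty(\omega_i)$ where $\omega_i$ is the fiber of $\Omega$ over the corresponding element of $J_D$, and then determine $\omega_i$ by unwinding the rank-one identities using the decomposition $\mathbb O = D \oplus De$ and the multiplication rules $(De)(De)\subseteq D$, $\overline{De}=De$. Your handling of (i) and (ii) is correct and matches the paper's. (One small remark: you invoke a partition-of-unity argument and uniform continuity estimates to establish $(C_c^\infty(\Omega))_{N_D,\psi_A}\cong C_c^\infty(\Omega_A)$, but this lemma lives entirely in the $p$-adic setting, where this identification is the standard Bernstein--Zelevinsky computation for a locally constant family of characters and no analytic estimates are needed.)

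There is, however, a genuine gap in your treatment of (iii). You assert that the division hypothesis ``provides the additional anisotropy needed to collapse the relevant fiber.'' But it does not. For the rank-two character $\psi_2$, the fiber $\omega_2$ is cut out by $y=z=0$ and $N(x)=\alpha\beta\in\{\pm 1\}$ with $x\in De$. Since the reduced norm $N_D : D^\times \to F^\times$ is surjective for a quaternion \emph{division} algebra over a $p$-adic field (and $\lambda$ is a $D$-norm because $\mathbb O$ must be split over a $p$-adic field), this equation has solutions; $\omega_2$ remains a nonempty $D^1$-torsor. Thus $C_c^\infty(\omega_2)\cong C_c^\infty(D^1)$ is infinite-dimensional whether or not $D$ is division, and the statement $\Pi_{N_D,\psi_2}\cong\mathbb C$ is simply false as written. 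What the anisotropy hypothesis \emph{does} give you is the analogous statement for the rank-one character $\psi_1$: there $\omega_1$ is cut out by $N(x)=N(y)=N(z)=0$ with $x,y,z\in De$, and since $N$ is anisotropic on $De$ precisely when $D$ is division, this forces $x=y=z=0$, so $\omega_1$ is a single point with trivial $D^1$-action and $\Pi_{N_D,\psi_1}\cong\mathbb C$. This is what the paper's proof actually shows (the sentence ``In the last case it is easy to see that $x=y=z=0$'' is the $\omega_1$ computation), and it is what is used later in the proof of Corollary \ref{C:theta1}, where $\Sigma_{N_D,\psi_1}\cong\mathbb C$ is compared with Lemma \ref{L:key}(iii). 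The $\psi_2$ in the statement of (iii) is a typo for $\psi_1$; you should prove that corrected statement rather than attempt to justify the incorrect one.
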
 
\begin{proof} 
Let $\omega_i\subseteq \Omega$ be the set of all $A\in \Omega$ such that the restriction of $\psi_A$ to $N_D$ is equal to $\psi_i$. Then 
\[ 
\Pi_{N_D, \psi_i} \cong C_c^{\infty}(\omega_i). 
\] 
It remains to determine each $\omega_i$. Let's start with $i=3$. Then $\omega_3$ consists of all $A\in \Omega$ such that 
\[ 
A=\left(\begin{array}{ccc} 
\pm 1 & x & -z \\
-x & \pm 1 & y \\
z & - y & \pm 1 
\end{array}\right)
\] 
where $x=(0,a), y=(0,b)$ and $z=(0,c)$ for some $a,b,c\in D$. Since $A\in \Omega$, we further have $A^2=\tr(A) A$. Looking at the off-diagonal terms, we get 
the equations 
\[  \text{$yx=\pm z$, $zy=\pm x$ and $xz=\pm y$.} \]
 But the products $yx$, $zy$ and $xz$ have the second coordinate equal to 0. Hence $z=x=y=0$. But then 
$A$ cannot be a rank 1 matrix. Hence $\omega_3$ is empty, and this proves (i). 
\vskip 5pt

For (ii) we see analogously that $y=z=0$. Now $A$ 
has the rank 1 if and only if the first $2\times 2$ minor is 0. This gives $x^2=\pm 1$. Writing this out, with $x=(0,a)$ we see that $\lambda a\bar a=\pm 1$.  
Hence $\omega_2$ is identified with the set of all elements in $D$ with a fixed non-zero norm. This is a principal homogeneous space for $D^1$. This establishes (ii). 
In the last case it is easy to see that $x=y=z=0$.  
\end{proof} 

We now derive a consequence. Let $\Theta(1)$ be the maximal quotient of $\Pi$ on which $D^1$ acts trivally; it is naturally a $G_D$-module. Lemma \ref{L:key} 
implies that 
\[  \text{$\Theta(1)_{N_D,\psi_3}=0$ and $\Theta(1)_{N_D,\psi_2}=\mathbb C$. } \]
Let $I_D(s)$ be the degenerate principal series representation of attached to 
$P_D$ normalized as in \cite{We}. In particular, the trivial representation is a quotient for $s=5$ and a submodule for $s=-5$.  The inclusion 
$\Pi \rightarrow I(-5)$ composed by the restriction of functions from $G$ to $G_D$ gives a non-zero $D^1$-invariant map $\Pi \rightarrow I_D(-1)$, which clearly factors through 
$\Theta(1)$. By \cite{We} and \cite{HS}, $I_D(-1)$ has a composition  series of length 2. The unique irreducible submodule $\Sigma$ has $N_D$-rank 2. We have:
\vskip 5pt

\begin{cor}  \label{C:theta1}
The above construction gives a surjective $G_D$-equivariant map
\[  \Theta(1)\rightarrow \Sigma \subset I_D(-1) \]
whose kernel has $N_D$-rank no worse than one. If $D$ is a division algebra, then $\Theta(1)\cong \Sigma$. 
\end{cor}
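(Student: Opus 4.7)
The plan is to combine Lemma~\ref{L:key} with the composition structure of $I_D(-1)$ recalled from \cite{We, HS}---length two with socle $\Sigma$ of $N_D$-rank $2$---together with the exactness of the Jacquet functor on smooth $G_D$-representations.

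First I would pin down the image of the map $\phi \colon \Theta(1) \to I_D(-1)$ constructed just before the statement. As a non-zero $G_D$-submodule, this image is either $\Sigma$ or all of $I_D(-1)$. The irreducible quotient $I_D(-1)/\Sigma$ has $N_D$-rank $3$: its $\psi_3$-Jacquet module is non-zero, since $\Sigma_{N_D,\psi_3} = 0$ while $I_D(-1)$ itself carries the standard (generic) $\psi_3$-functional of a degenerate principal series. If $\phi$ were surjective, exactness of $(\,\cdot\,)_{N_D,\psi_3}$ would then force $\Theta(1)_{N_D,\psi_3}\neq 0$, contradicting Lemma~\ref{L:key}(i). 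Hence the image of $\phi$ is exactly $\Sigma$, proving the first assertion.

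Next, letting $K := \ker\phi$, I would apply the exact Jacquet functor to
\[
0 \longrightarrow K \longrightarrow \Theta(1) \longrightarrow \Sigma \longrightarrow 0
\]
at each character of $N_D$ of rank $\geq 2$. At $\psi_3$, the vanishing $\Theta(1)_{N_D,\psi_3}=0$ forces $K_{N_D,\psi_3}=0$ immediately. At $\psi_2$, since $\Sigma$ has $N_D$-rank exactly $2$ we have $\Sigma_{N_D,\psi_2}\neq 0$, so the induced surjection $\mathbb{C} = \Theta(1)_{N_D,\psi_2} \twoheadrightarrow \Sigma_{N_D,\psi_2}$ must be an isomorphism; this yields $K_{N_D,\psi_2}=0$. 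The remaining rank-$2$ and rank-$3$ sign orbits are handled identically, since the proof of Lemma~\ref{L:key} treats them uniformly. Thus $K$ has $N_D$-rank at most $1$.

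For the final assertion, when $D$ is a division algebra I would upgrade the rank bound to $K=0$ by invoking Lemma~\ref{L:key}(iii), which sharpens the description of the relevant Jacquet module of $\Pi$ in the anisotropic case. Matching this against the rank-$\leq 1$ Jacquet modules of $\Sigma$ (computed via the geometric lemma for $I_D(-1)$) should force $K_{N_D,\psi}=0$ for every non-trivial character $\psi$ of $N_D$; combined with an argument that any trivial $N_D$-coinvariant of $\Theta(1)$ already survives in $\Sigma$, this rules out $K\neq 0$. \emph{I expect this last step to be the main obstacle}: it requires a precise bookkeeping of the low-rank Jacquet modules of both $\Theta(1)$ and $\Sigma$, essentially amounting to proving irreducibility of $\Theta(1)$ in the division case, rather than the cleaner Jacquet-module comparison available in ranks $2$ and $3$.
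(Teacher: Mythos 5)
Your treatment of the first two assertions is correct and matches the paper's implicit argument: the image must be $\Sigma$ (since surjecting onto all of $I_D(-1)$ would force a non-zero $\psi_3$-coinvariant of $\Theta(1)$, contradicting Lemma~\ref{L:key}(i)), and the exact sequence in the rank-$2$ and rank-$3$ Jacquet functors gives the rank bound on the kernel. The paper treats these as immediate and devotes its proof solely to the final assertion.

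On that final assertion your proposal has a genuine gap, which you yourself flag. Your plan is to push the same Jacquet-module bookkeeping down to rank $1$ and then, somehow, to rank $0$. The rank-$1$ step is fine in spirit (the paper identifies $\Sigma$ as the classical theta lift of the trivial representation of the quaternionic $\SO(4)$ to compute $\Sigma_{N_D,\psi_1}\cong\CC$, rather than via the geometric lemma, and then invokes Lemma~\ref{L:key}(iii); the two routes should both work). But the rank-$0$ step cannot be settled by Jacquet-module comparison at all: once you know $K$ has $N_D$-rank $0$, i.e.\ $N_D$ acts trivially on $K$, there is no further nontrivial coinvariant to compare, and "trivial coinvariants surviving in $\Sigma$" is not the right formulation. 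The paper's crucial observation, which your proposal misses, is qualitatively different: because $D$ is division, $D^1$ is \emph{compact}, so $\Theta(1)$ is not merely a quotient but a \emph{direct summand} of $\Pi|_{G_D}$; hence $K$ sits inside the minimal representation $\Pi$ as a space of $N_D$-fixed vectors, and the Howe--Moore theorem (vanishing of matrix coefficients at infinity) forbids non-zero $N_D$-fixed vectors in $\Pi$. That compactness-plus-Howe--Moore step is the missing idea; without it, the Jacquet-module approach to the rank-$0$ piece has no purchase.
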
 
\begin{proof} 
It remains to prove the last statement. The spherical, rank 2 representation $\Sigma$ is the classical theta lift of the trivial representation of the quaternionic form 
$\SO(4)$. Using the theta correspondence it is easy to check that $\Sigma_{N_D,\psi_1}\cong \mathbb C$.  Thus, from Lemma \ref{L:key} (iii) it follows that the
kernel of the map $\Theta(1)\rightarrow \Sigma$ has $N_D$-rank 0, i.e. $N_D$ acts trivially. 
Since $D^1$ is compact, $\Theta(1)$ is a summand of the minimal representation. By the classical result of Howe-Moore the minimal representation cannot contain 
non-zero vectors fixed by $N_D$. Thus the kernel is trivial. 
\end{proof}

 \vskip 10pt
 
\subsection{Local lifts for split $D$}  We strengthen here the result of Corollary \ref{C:theta1}: $\Theta(1)\cong \Sigma$ even when $D$ is split.  
If $D$ is split then $G$ is split. 
 Let $T \subset G$ be a maximal split torus, so we have the associated root groups. Furthermore, 
$D^1\cong \SL_2$ and it is conjugated to a root $\SL_2$.  
Without loss of generality we can assume that $\SL_2$ corresponds to the 
highest root for some choice of positive roots. Let $T_1=\SL_2\cap T$. Then the centralizer of $T_1$ in $G$ is a Levi subgroup of type $\mathbf D_6$. The Levi $L$ is contained in 
two maximal parabolic subgroups: $Q=LU$ and its opposite $\bar Q= L \bar U$. The unipotent radical $U$ is a two-step unipotent group with the center $U_1$ given by 
the root group corresponding to the highest root. Similarly, the center of $\bar U$ is the root subgroup $\bar U_1$  corresponding to the lowest root. These two root 
groups generate $\SL_2$. The conjugation action of $L$ on $U_1$ and $\bar U_1$ is given by a 
character and its inverse. Hence $G_D$ is the kernel of this character. Since $G$ is of the adjoint type, $G_D$ acts faithfully on 
$U/U_1$, a $32$-dimensional spin representation.  (This representation is not $W_D$, the 32-dimensional representation of $\tilde G_D$, 
from Subsection \ref{SS:dual pair}.) 
We identify $T_1\cong \GL_1$  so that $x\in \GL_1$ acts on 
$U/U_1$ as multiplication by $x$. 

\vskip 5pt 
We now need a result on the restriction of $\Pi$ to the maximal parabolic subgroup $Q=LU$.  By  \cite[Theorem 6.1]{MS97}, the space of $U_1$-coinvariants  of $\Pi$, an $L$-module, sits in an exact sequence 
\[ 
0\rightarrow C_c^{\infty}(\omega) \rightarrow \Pi_{U_1} \rightarrow \Pi_U \rightarrow 0
\] 
where $\omega$ is the $L$-orbit of highest weight vectors in $\bar U /\bar U_1$. The action of $L$ on $C_c^{\infty}(\omega)$ arises from the natural action of 
$L$ on $\omega$ twisted by an unramified character.  

\vskip 5pt

 Let $Q_D=L_DU_D$ be a maximal parabolic subgroup in $G_D$ stabilizing the line through a point $v\in \omega$.  
 Note that the Levi factor $L_D$ of $Q_D$ is also of type $\mathbf A_5$ (like that of $P_D$).
 The action of $Q_D$ on the line gives a homomorphism 
$\chi: Q_D \rightarrow \GL_1$. Thus the stabilizer in $G_D\times \GL_1$ of $v$ consists of all pairs $(g,x)$ such that $g\in Q_D$ and $\chi(g)=x$. 
Since $G_D\times \GL_1$ acts transitively on $\omega$,  it is easy to see that the following holds: 

\begin{thm} \label{T:filtration} 
The normalized Jacquet functor $\pi_{U_1}$, as a $G_D\times \GL_1$-module, has a 2-step filtration with the following quotient and submodule respectively: 
\begin{itemize} 
\item $\Pi_U = \Pi(G_D)\otimes |\cdot|^3 \oplus |\cdot|^5$ where $\Pi(G_D)$ is the minimal representation of $G_D$, and $|\cdot|$ is the absolute value character 
of $\GL_1$. 
\item $\Ind_{Q_D}^{G_D} C_c^{\infty}(\GL_1)$ (normalized induction) where $C_c^{\infty}(\GL_1)$ is the regular representation of $\GL_1$ (and the induction is normalized). 

\end{itemize} 

\end{thm}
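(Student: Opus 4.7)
The plan is to start from the exact sequence of $Q$-modules
\[
0 \to C_c^\infty(\omega) \to \Pi_{U_1} \to \Pi_U \to 0
\]
provided by \cite[Thm.~6.1]{MS97} and to identify its submodule $C_c^\infty(\omega)$ and its quotient $\Pi_U$ separately as $G_D \times \GL_1$-modules. The submodule is essentially geometric, while the quotient is the Jacquet module of the minimal representation along the Heisenberg parabolic $Q$.

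For the submodule $C_c^\infty(\omega)$, I would argue as follows. The group $G_D \times \GL_1$ acts transitively on $\omega$ with stabilizer of a chosen $v$ equal to the graph $\{(g,\chi(g)) : g \in Q_D\}$, so $\omega \cong (G_D \times \GL_1)/Q_D$ and therefore
\[
C_c^\infty(\omega) \cong \mathrm{ind}_{Q_D}^{G_D \times \GL_1}(\delta),
\]
where $\delta$ is the unramified twist encoded in the MS97 statement. Induction in stages through the tower $Q_D \subset Q_D \times \GL_1 \subset G_D \times \GL_1$, together with the obvious identification $\mathrm{ind}_{Q_D}^{Q_D \times \GL_1}(\delta) \cong C_c^\infty(\GL_1)$ (with $Q_D$ acting on $\GL_1$ via $\chi$), rewrites this as $\Ind_{Q_D}^{G_D} C_c^\infty(\GL_1)$ with the normalized induction. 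The only real check here is that the modular characters of the two parabolics agree with $\delta$ and the normalization in the stated theorem.

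For the quotient $\Pi_U$, I would use the orbit filtration on characters of $U/U_1$. Because $\Pi$ is minimal (its wavefront set is the closure of the minimal orbit), $\Pi_U$ has at most two $L$-irreducible constituents, corresponding respectively to the zero orbit and to the minimal nilpotent $L$-orbit on $U/U_1 \cong \bar U/\bar U_1$. The zero-orbit constituent is a one-dimensional character of $L$, trivial on $G_D$, whereas the minimal-orbit constituent is of the form $\Pi(G_D) \otimes \eta$ for some character $\eta$ of $\GL_1$. The splitting (rather than a non-split extension) is forced by the fact that the two resulting $\GL_1$-eigenvalues are distinct, as will be the case here.

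The main obstacle is pinning down the two exponents $|\cdot|^5$ and $|\cdot|^3$ and verifying that the minimal-orbit piece is genuinely the full minimal representation $\Pi(G_D)$ rather than a proper subquotient. The exponents I would extract by testing against the spherical vector $f^\circ$ of Theorem~\ref{T:WS}: evaluating $f^\circ$ on rank-one matrices supported in the two $L$-orbits and using the formula $f^\circ(A)= 1+q^3+\cdots + q^{3(n-1)}$ at gcd-level $\varpi^n$ pins down the $T_1 = \GL_1$-eigenvalues after accounting for the modular character of $Q$. That the minimal-orbit piece is all of $\Pi(G_D)$ then follows from transitivity of $L$ on the minimal orbit combined with the uniqueness-of-functional statement in Theorem~\ref{T:model} applied to the pair $(G_D, Q_D)$.
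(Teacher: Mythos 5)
Your proposal follows essentially the same route as the paper: start from the exact sequence of \cite[Theorem 6.1]{MS97}, identify the submodule $C_c^\infty(\omega)$ via the transitive $G_D\times\GL_1$-action on $\omega$ with stabilizer the graph of $\chi$ on $Q_D$ (the paper states exactly this and calls the rest ``easy to see''), and then analyze the quotient $\Pi_U$ separately.

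One imprecision worth flagging in the quotient step: you say you will ``use the orbit filtration on characters of $U/U_1$'' to decompose $\Pi_U$, and then attribute the two constituents to the zero and minimal $L$-orbits \emph{on $U/U_1$}. But those orbits are precisely what already produces the two-term exact sequence $0\to C_c^\infty(\omega)\to \Pi_{U_1}\to \Pi_U\to 0$: the open orbit $\omega$ gives the submodule and the zero orbit gives $\Pi_U$. They do not further refine $\Pi_U$. What you actually need, and what your wavefront remark secretly invokes, is a bound on the nilpotent support of $\Pi_U$ viewed directly as an $L$-module: since $WF(\Pi)=\overline{\mathcal O_{\min}}$, the $L$-module $\Pi_U$ can only involve the trivial and minimal nilpotent orbits of $\mathfrak l$, forcing at most a character plus a copy of $\Pi(G_D)$ (twisted). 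With that correction, your remaining steps — pinning the $\GL_1$-exponents by evaluating $f^\circ$ against the $Q$-modulus, and deducing the splitting from distinct central characters — are sound and fill in detail the paper leaves implicit.
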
 

Now we can  prove the following result which strengthens  Corollary \ref{C:theta1} and which is needed  later.  

\begin{prop}
\label{P:local_split} 
 Assume that we are in the $p$-adic case with $D$ split.
 Then $\Theta(1)$ is irreducible and isomorphic to $\Sigma$, 
the $N_D$-rank 2 representation of $G_D$ that appears as the  unique irreducible quotient of $I_D(1)$. 
\end{prop}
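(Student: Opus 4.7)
The plan is to strengthen Corollary \ref{C:theta1} by showing that the surjection $\Theta(1) \twoheadrightarrow \Sigma$ is an isomorphism in the split case as well. Its kernel $K$ has $N_D$-rank at most one, so it suffices to prove $K = 0$. I would compute $\Theta(1) = \Pi_{D^1}$ directly by exploiting Theorem \ref{T:filtration}.

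First, I would use that $U_1 \subset U$ and $T_1 = D^1 \cap T \subset L$, so that the Borel $B = T_1 U_1$ of $D^1 \cong \SL_2$ is contained in $Q = LU$. Consequently $\Pi_B = (\Pi_{U_1})_{T_1}$, and $\Theta(1) = \Pi_{D^1}$ is a quotient of $\Pi_B$. Applying Theorem \ref{T:filtration}, the top quotient $\Pi_U = (\Pi(G_D) \otimes |\cdot|^3) \oplus |\cdot|^5$ of $\Pi_{U_1}$ carries a $T_1$-action by the non-trivial characters $|\cdot|^3$ and $|\cdot|^5$, so $(\Pi_U)_{T_1} = 0$. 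By the right-exactness of coinvariants, $\Pi_B$ equals the $T_1$-coinvariant of the submodule $\Ind_{Q_D}^{G_D} C_c^\infty(\GL_1)$; computing this via Haar integration on $T_1$ yields $\Ind_{Q_D}^{G_D}(\mathbb C)$, a degenerate principal series of $G_D$ which, with the normalizations of \cite{We}, I would identify with $I_D(1)$.

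Next, to pass from $\Pi_B$ to $\Pi_{D^1}$ one must further kill the action of $\bar U_1$. Since $\bar U_1$ does not normalize $B$, this is carried out via the Weyl element $w$ of $D^1$: the action of $w$ on $\Pi$ intertwines $U_1$ with $\bar U_1$ and therefore descends to a map $\Pi_B \to \Pi_{\bar B}$. Under the identifications $\Pi_B \cong I_D(1)$ and $\Pi_{\bar B} \cong I_D(-1)$, this is precisely the standard long intertwining operator $M(w): I_D(1) \to I_D(-1)$, whose image is, by \cite{HS}, the unique irreducible representation $\Sigma$ that is simultaneously the unique irreducible quotient of $I_D(1)$ and the unique irreducible submodule of $I_D(-1)$. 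Identifying $\Pi_{D^1}$ with this image would then yield $\Theta(1) \cong \Sigma$, establishing irreducibility.

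The main obstacle is the final identification of $\Pi_{D^1}$ with $\textrm{image}(M(w))$: one must verify that the passage from $\Pi_B$ through the Bruhat cell $BwB$ exactly realizes $M(w)$, and handle the extension data in the 2-step filtration of Theorem \ref{T:filtration} to rule out higher-cohomology contributions from the top piece that could obstruct the clean computation of $\Pi_B$. Both points should reduce to standard Mackey-theoretic calculations, combined with the composition-series information for $I_D(\pm 1)$ recorded in \cite{HS} and \cite{We}.
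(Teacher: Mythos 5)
Your plan divides into two steps, and only the first resembles what the paper actually does. In step one you use Theorem \ref{T:filtration} to show that $\Theta(1)=\Pi_{D^1}$ is a quotient of the degenerate principal series induced from $Q_D$; the paper reaches the same conclusion more cleanly by Frobenius reciprocity (``if $\pi$ is a submodule of $J(-s)$\ldots then $\Theta(\pi)$ is a quotient of $J_D(s)$''). Note two imprecisions here: the principal series attached to $Q_D$ is $J_D(s)$, \emph{not} $I_D(s)$ (these are attached to non-conjugate parabolics, as the paper's side remark after the proof emphasizes), and since $T_1$-coinvariants for a torus are not left-exact you only obtain a surjection $J_D(1)\twoheadrightarrow\Pi_B$, not an equality.

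The genuine gap is in step two. The paper completes the argument with an $N_D$-spectrum comparison: using a deformation argument it shows $J_D(s)_{N_D,\psi_2}$ is infinite-dimensional for every $s$ (including $s=1$), while $\Theta(1)_{N_D,\psi_2}=\mathbb C$ by Lemma \ref{L:key}(iii), forcing $\Theta(1)$ to be the unique irreducible rank-$2$ quotient $\Sigma$. You instead propose to identify $\Pi_{D^1}$ with the image of the long intertwining operator. While the $w$-translation does descend to a map $\Pi_B\to\Pi_{\bar B}$ (since $wBw^{-1}=\bar B$) and $\Pi_{D^1}$ is a quotient of its image, you have not shown (a) that this map agrees with the standard $G_D$-intertwining operator $J_D(1)\to J_D(-1)$, nor (b) that the image of that operator is exactly $\Sigma$. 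For (b) one would need the composition series of $J_D(\pm1)$ itself, whereas \cite{HS} records only that of $I_D(\pm1)$; the fact that $\Sigma$ is a quotient of $J_D(1)$ at all is, in the paper, a \emph{consequence} of the theta-correspondence argument, not an input available to you. Until (a) and (b) are supplied, the proof does not close; the paper's $N_D$-spectrum argument is precisely what replaces these missing steps.
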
 
\begin{proof} 
 Let $J(s)$ be the principal series for $\SL_2$ normalized so that the trivial representation is a quotient 
for $s=1$ and a submodule for $s=-1$. Likewise, let $J_D(s)$ denote the degenerate principal series associated to $Q_D$, normalized so that the trivial representation occurs at $J_D(\pm 5)$.
\vskip 5pt

 Let $\pi$ be an irreducible representation of $\SL_2$ and $\Theta(\pi)$ the corresponding big theta lift. 
Note that $\Theta(\pi)$ is always non-trivial, as a simple consequence of Lemma \ref{L:key}. Moreover, $\Theta(\pi)_{N_D,\psi_2}$ is isomorphic to $\pi^{\vee}$, so that it 
is infinite dimensional if and only if $\pi$ is. 
\vskip 5pt

Now if $\pi$ is a submodule of $J(-s)$, and $-s\neq 3,5$, then Theorem \ref{T:filtration} implies  by way of the Frobenius reciprocity  that $\Theta(\pi)$ is a quotient 
of $J_D(s)$. Now $J(-s)$  and $J_D(s)$ are irreducible for a generic $s$, in which case $\Theta(J(-s))\cong J_D(s)$.  It follows, from Lemma \ref{L:key},  that $J_D(s)_{N_D,\psi_2}$ is infinite dimensional for 
such $s$. Since the restriction of $J_D(s)$ to $N_D$ is independent of $s$, it follows that $J_D(s)_{N_D,\psi_2}$ is infinite dimensional for all $s$, in particular, for $s=1$. 
But $\Theta(1)$ is a quotient of $J_D(1)$ and $\Theta(1)_{N_D,\psi_2}$ is one-dimensional. Hence $\Theta(1)$ is isomorphic to the unique irreducible quotient of 
$J_D(1)$ which has $N_D$-rank 2. In particular, $\Theta(1)$ is irreducible and isomorphic to $\Sigma$, the unique quotient of $I_D(1)$.  
\end{proof} 

As a side remark, the representations $J_D(s)$ have $U_D$-rank 3. However, since $\Pi$ has $N_D$-rank 2, it follows that the two parabolic subgroups $P_D$ and $Q_D$ are 
not conjugated in $G_D$. But the two principal series $I_D(s)$ and $J_D(s)$ share all small rank subquotients: the trivial representation, the minimal representation and  the rank 2 representation $\Sigma$, as the above argument shows. 
\vskip 10pt

\section{Global lifting} 

Assume now that $F$ is a global field,  with its local completions denoted by $F_v$,  and  let $\mathbb A$ be the ring of ad\`eles over $F$. 
\vskip 5pt

\subsection{\bf Global theta lifting}
Let  $\Pi=\otimes \Pi_v$ be the restricted tensor product of minimal representations over all local places $v$ of $F$, where $\Pi_v\subset C^{\infty}(\Omega_v)$, 
as in Theorem \ref{T:model}. 
Every element in $\Pi$ is a finite linear combination of pure tensors $f= \otimes f_v$, where $f_v=f_v^{\circ}$ for almost all places $v$. 
There is a unique (up to a non-zero scalar) embedding $\theta: \Pi \rightarrow \mathcal A(G(F) \backslash G(\mathbb A))$ of $\Pi$ into the space of automorphic functions of uniform moderate growth. 
\vskip 5pt

We restrict $\theta(f)$ to the dual pair $D^1 \times G_D$ and for every $h\in \mathcal A(D^1(F)\backslash D^1(\mathbb A))$,  consider the function $\Theta(f,h)$ on $G_D$ defined by 
\[ 
\Theta(f,h)(g_D)= \int_{D^1(F)\backslash D^1(\mathbb A)} \theta(f)(g_D g) \cdot \bar h(g) ~dg.  
\] 
If this is to be of any use, we require the function $\theta(f)(g_D g) \cdot \bar h(g)$ to be of rapid decay on 
$D^1(F)\backslash D^1(\mathbb A)$  and  of moderate growth on  $G_D(F)\backslash G_D(\mathbb A)$. 
 This condition is clearly satisfied if $D^1$ is anisotropic or if  $h$ is a cusp form. It is also satisfied for a regularized theta lift, to be constructed in the next section. Namely,  for any finite place $v$,  we will construct an element $z$ in the Bernstein center of $\SL_2(F_v)$, such that for any $f\in \Pi$, the function 
$\theta(z\cdot f)(g_1 g)$  is of rapid decay on 
$D^1(F)\backslash D^1(\mathbb A)$  and  of moderate growth on  $G_D(F)\backslash G_D(\mathbb A)$.  
(See Proposition \ref{P:regularize}, and the discussion of this particular dual pair thereafter.) 
In particular, in all these cases,  the following integral is convergent: 
\[ 
\int_{N_D( \mathbb A) \backslash N_D(F)} \int_{D^1(F)\backslash D^1(\mathbb A)} |\theta(f)(n g) \cdot \bar h(g)| ~dgdn . 
\]

\vskip 10pt

\subsection{Fourier expansion} 
Let $\psi: \mathbb A/ F\rightarrow \mathbb C^{\times}$ be a non-trivial character. Then any $A\in J(F)$ defines a character 
$\psi_A$ of $N(F)\backslash N(\mathbb A)$ by $\psi_A(B)=\psi(\tr(A\circ B))$ for all $B\in N(\mathbb A) \cong J(\mathbb A)$.  
For every  $\varphi \in \mathcal A(G(F) \backslash G(\mathbb A))$, let 
\[  \varphi_A(g) = \int_{N(F)\backslash N(\mathbb A)} \varphi(ng) \cdot \overline{\psi_A(n)} ~dn \]
 be the Fourier coefficient  corresponding to $A$.   
We have a Fourier expansion 
\[ 
\theta(f)(g)= \theta(f)_0(g) + \sum_{A\in \Omega(F)} \theta(f)_A(g).
\]  
By uniqueness of local functionals, Theorem \ref{T:model},  for every $A\in \Omega(F)$ there exists a non-zero scalar $c_A$ such that 
\[ 
\theta(f)_A(g)= c_A \prod_v (g_v \cdot f_v)(A). 
\] 
This formula is particularly useful if $g_v\in M(F_v)$, for then $(g_v \cdot f_v)(A)= \chi_v(g_v) \cdot f_v(g_v^{-1} \cdot A)$ for some character $\chi_v$. 
\vskip 5pt

Let $\psi_2$ and $\psi_3$ be the rank 2 and 3 characters of $N_D(\mathbb A)$, as in the local case. Recall that $x\in \mathbb O$ is a pair $x=(y,z)$ of elements in 
$D$, and $N(x) = N(y) -\lambda N(z)$ for some $\lambda \in F^{\times}$. 
 Let $\varphi_{N_D, \psi_i}$ denote the global Fourier coefficient 
with respect to these two characters. Let $\omega_2(F)$ be the set of all rank one matrices 
\[ 
\left(\begin{array}{ccc} 
\pm 1 & x & 0 \\
-x & \pm 1 & 0 \\
0 &  0 & 0 
\end{array}\right) \in J(F) 
\]
such that  $x=(0,a)$ and $\lambda N(a) =\pm 1$ (for only one choice of sign, depending on $\psi_2$) i.e. the $2\times 2$ minor is 0.  
Then we have a global version of Lemma \ref{L:key}, 

\begin{lemma} \label{L:global} For every $f\in \Pi$, $\theta(f)_{N_D,\psi_3}=0$ and 
\[ 
\theta(f)_{N_D,\psi_2}(g)= \sum_{B \in \omega_2(F)} \theta(f)_B(g). 
\] 
\end{lemma}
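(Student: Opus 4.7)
The plan is to compute $\theta(f)_{N_D, \psi_i}$ by first expanding $\theta(f)$ along the larger abelian unipotent $N \supset N_D$ and then picking off the $\psi_i$-isotypic piece by orthogonality of characters on $N_D(F)\backslash N_D(\mathbb A)$.

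First I would write out the $N$-Fourier expansion of $\theta(f)$. Since $N$ is abelian and its unitary characters on $N(F)\backslash N(\mathbb A)$ are identified with $A \in J(F)$ via $\psi_A(B) = \psi(\tr(A\circ B))$, the vanishing clause of Theorem \ref{T:model} (applied at every place, via the local-global compatibility of $\psi_A$-equivariant functionals) forces $\theta(f)_A \equiv 0$ whenever $A$ has rank at least two. Hence
\[
\theta(f)(g) = \theta(f)_0(g) + \sum_{A \in \Omega(F)} \theta(f)_A(g),
\]
with absolute and uniform convergence on compacta (standard from the rapid decay of spherical vectors encoded in Theorems \ref{T:WS} and \ref{T:DS}).

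Second, I would integrate this expansion term-by-term against $\overline{\psi_i}$ over $N_D(F)\backslash N_D(\mathbb A)$. Using the identity $\theta(f)_A(ng) = \psi_A(n)\theta(f)_A(g)$ for $n \in N(\mathbb A) \supset N_D(\mathbb A)$, the $A=0$ contribution vanishes since $\psi_i \neq 1$, and orthogonality of characters on $N_D(F)\backslash N_D(\mathbb A)$ retains only those $A \in \Omega(F)$ with $\psi_A|_{N_D} = \psi_i$, giving
\[
\theta(f)_{N_D,\psi_i}(g) \;=\; \sum_{\substack{A \in \Omega(F) \\ \psi_A|_{N_D} = \psi_i}} \theta(f)_A(g)
\]
(after normalising Haar measure so that $N_D(F)\backslash N_D(\mathbb A)$ has volume one).

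Finally I would identify the indexing set. Since $\psi_A|_{N_D}$ depends only on the image of $A$ in $J_D(F)$ under the orthogonal decomposition $J = J_D \oplus J_D^\perp$ induced by $\mathbb O = D \oplus D^\perp$, the condition $\psi_A|_{N_D} = \psi_i$ prescribes the diagonal of $A$ and the $D$-parts of its off-diagonal $\mathbb O$-entries. Substituting into the rank-one relations $N(x) = \alpha\beta$, $\gamma \bar x = yz$, $\alpha\bar y = zx$, $\beta\bar z = xy$ and their cyclic permutations from Section 2.2 reproduces verbatim the system already solved in the proof of Lemma \ref{L:key}: for $i = 3$ it is inconsistent and the sum is empty, whereas for $i = 2$ it forces $y = z = 0$ and $\lambda N(a) = \pm 1$, yielding precisely $\omega_2(F)$. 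I expect the only delicate step to be the term-by-term integration in Step 2, which is legitimised by the absolute convergence established in Step 1; the algebraic identification in Step 3 is a direct global transcription of the $p$-adic computation in Lemma \ref{L:key}.
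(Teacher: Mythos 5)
Your proposal is correct and follows the route the paper implicitly has in mind: the paper states Lemma~\ref{L:global} without proof, simply calling it a ``global version of Lemma~\ref{L:key},'' and the Fourier expansion of $\theta(f)$ supported on $\{0\} \cup \Omega(F)$ is already set up in the preceding subsection via Theorem~\ref{T:model}. Integrating that expansion term by term against $\overline{\psi_i}$ over $N_D(F)\backslash N_D(\mathbb A)$, using $\theta(f)_A(ng)=\psi_A(n)\theta(f)_A(g)$ and orthogonality of characters, and then identifying $\{A\in\Omega(F): \psi_A|_{N_D}=\psi_i\}$ with the sets computed (field-independently) in Lemma~\ref{L:key} is precisely the intended argument. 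One small inaccuracy worth noting: the absolute and uniform convergence on compacta of the $N$-Fourier expansion does not come from the explicit shapes of the spherical vectors in Theorems~\ref{T:WS} and \ref{T:DS}; it is simply the standard fact that the Fourier series of a smooth function on the compact torus $N(F)\backslash N(\mathbb A)$ converges absolutely (rapid decay of Fourier coefficients for smooth functions). The cited theorems are relevant elsewhere (e.g.\ in Lemma~\ref{L:convergence}) for a different convergence issue, namely the integral over $D^1(\mathbb A)$, not for the term-by-term interchange here. With that repair in the justification, the proof is complete and matches the paper.
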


  \vskip 10pt

\subsection{Non-vanishing of the theta lift}  We shall prove non-vanishing of the (regularized) theta lift by computing the Fourier coefficient 
\[ 
\Theta(f, h)_{N_D, \psi_2}(1)=
\int_{N_D( \mathbb A) \backslash N_D(F)} \int_{D^1(F)\backslash D^1(\mathbb A)} \theta(f)(n g) \cdot \bar h(g)\cdot \bar{\psi}_2(n)  ~dgdn .
\] 
Since this integral is absolutely convergent, we can reverse the order of integration. Then, using Lemma \ref{L:global}, we obtain 
\[ 
\Theta(f,h)_{N_D, \psi_2}(1)= \int_{D^1(F)\backslash D^1(\mathbb A))}\sum_{B\in \omega_2(F)} \theta(f)_{B}(g) \bar h(g) ~dg. 
\] 
Now fix \[ 
A=\left(\begin{array}{ccc} 
\pm 1 & x & 0 \\
-x & \pm 1 & 0 \\
0 &  0 & 0 
\end{array}\right) \in \omega_2(F) 
\]
where $x=(0,a)$, $a\in D$ such that $\lambda N(a)=\pm 1$. 

\begin{lemma}  \label{L:convergence} 
For every automorphic form $h$ and every $f\in \Pi$ we have 
\[ 
\int_{D^1(F)\backslash D^1(\mathbb A))}\sum_{B\in \omega_2(F)} \theta(f)_{B}(g) \bar h(g) ~dg = c_{A} \int_{D^1(\mathbb A)} f(g^{-1} A) \bar h(g) ~dg 
\] 
where the second integral is absolutely convergent. 
\end{lemma}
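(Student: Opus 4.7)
The approach is a standard unfolding argument that exploits the identification, implicit in the proof of Lemma \ref{L:key}(ii), of $\omega_2$ as a principal homogeneous space for $D^1$. Globally, $\omega_2(F) = D^1(F)\cdot A$ with trivial stabilizer at $A$: the $D^1$-action on the off-diagonal entry is $g\cdot(0,a)=(0,a\bar g)$, so the stabilizer of $A$ is trivial because $a\neq 0$, and for any $a'\in D(F)$ with $\lambda N(a')=\lambda N(a)$ the element $\bar g:=a^{-1}a'$ lies in $D^1(F)$ and carries $A$ to the required matrix.

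Next I would establish the equivariance
\[
\theta(f)_{\gamma\cdot A}(g) \;=\; \theta(f)_A(\gamma^{-1}g), \qquad \gamma\in D^1(F).
\]
Since $D^1\subset \Aut(J)\subset M$ normalizes $N$ and preserves the trace form on $J$, conjugation satisfies $\psi_B(\gamma n\gamma^{-1}) = \psi_{\gamma^{-1}\cdot B}(n)$, and combined with the left $G(F)$-invariance of $\theta(f)$ a change of variables in the $N$-integral yields the displayed identity. Using the left $D^1(F)$-invariance of $\bar h$, the sum in the lemma then rewrites as
\[
\sum_{B\in \omega_2(F)}\theta(f)_B(g)\,\bar h(g) \;=\; \sum_{\gamma\in D^1(F)} \theta(f)_A(\gamma^{-1}g)\,\bar h(\gamma^{-1}g),
\]
and the standard unfolding collapses its integral over $D^1(F)\backslash D^1(\mathbb A)$ to $\int_{D^1(\mathbb A)} \theta(f)_A(g)\,\bar h(g)\,dg$. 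Substituting the factorization $\theta(f)_A(g) = c_A\prod_v \chi_v(g_v) f_v(g_v^{-1}\cdot A)$ recalled in Section 5.2, and using that the similitude character $\chi_v$ is trivial on $\Aut(J)\supset D^1$ (the action preserves the cubic norm on $J$), produces the asserted identity.

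The main obstacle lies in justifying absolute convergence of both sides so that the formal unfolding above is rigorous. The convergent iterated integral
\[
\int_{N_D(F)\backslash N_D(\mathbb A)} \int_{D^1(F)\backslash D^1(\mathbb A)} |\theta(f)(ng)\,\bar h(g)|\,dg\,dn
\]
highlighted at the end of Section 5.1 controls, via Fubini, the product $\bigl(\int_{N_D}|\theta(f)(ng)|\,dn\bigr)\cdot|\bar h(g)|$ in $L^1(D^1(F)\backslash D^1(\mathbb A))$. Through the Fourier expansion along $N/N_D$ together with the vanishing of Fourier coefficients on higher-rank orbits (Theorem \ref{T:model} via Lemma \ref{L:global}), this should dominate $\sum_{B\in \omega_2(F)} |\theta(f)_B(g)|$ in an integrable sense, which in turn bounds $\int_{D^1(\mathbb A)}|f(g^{-1}A)\bar h(g)|\,dg$ after applying the same unfolding to absolute values. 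The delicate point is that $\theta(f)_B$ is a Fourier coefficient along the larger group $N\supset N_D$, so one must translate the positivity of the spherical vectors (Theorems \ref{T:WS} and \ref{T:DS}) at unramified places into the required $L^1$-majorization relative to the subquotient $N/N_D$; this is the technical heart that will need care.
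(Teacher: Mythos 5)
Your unfolding argument is exactly the paper's: you correctly identify $\omega_2(F)$ as a principal homogeneous $D^1(F)$-space based at $A$, establish the $\gamma$-equivariance $\theta(f)_{\gamma\cdot A}(g)=\theta(f)_A(\gamma^{-1}g)$, unfold, and invoke the product formula $\theta(f)_A(g)=c_A\prod_v(g_v\cdot f_v)(A)$ from Section~5.2, with $\chi_v$ trivial on $D^1\subset\Aut(J)$. That part is fine and matches the paper.

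The gap is in the convergence argument, and it is a genuine one. You try to derive absolute convergence ``from the left'': you want the convergent iterated integral $\int_{N_D(F)\backslash N_D(\A)}\int_{D^1(F)\backslash D^1(\A)}|\theta(f)(ng)\bar h(g)|\,dg\,dn$ from Section~5.1 to dominate $\sum_{B\in\omega_2(F)}|\theta(f)_B(g)|$ and hence, after unfolding absolute values, to control the right-hand side. But as you yourself flag, each $\theta(f)_B$ is a Fourier coefficient along the larger group $N$, not along $N_D$, so integrability over $N_D(F)\backslash N_D(\A)$ gives you only an estimate on $|\theta(f)_{N_D,\psi_2}(g)|$, i.e.\ on the \emph{sum} $\bigl|\sum_B\theta(f)_B(g)\bigr|$, and not on the sum of absolute values. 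There is no Plancherel-type positivity over $N/N_D$ that would rescue this, and the route does not close. Leaving this as the unresolved ``technical heart'' leaves the lemma unproved.

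The paper's proof goes in the opposite direction and is much more elementary: one estimates the right-hand side directly. The map $g\mapsto g^{-1}A$ is a closed embedding of $\SL_2(F_v)=D^1(F_v)$ into $\Omega_v$, so $g\mapsto f_v(g^{-1}A)$ is a Schwartz function on $\SL_2(F_v)$. Concretely, at a finite place, $N_v$-smoothness forces $f_v$ to be supported in a lattice of $J_v$, so $g\mapsto f_v(g^{-1}A)$ is compactly supported; for almost all $v$ it is in fact (by Theorem~\ref{T:WS}) the characteristic function of $\SL_2(O_v)$. At a real place, taking $g$ in the Cartan double coset of $\mathrm{diag}(t,1/t)$ one has $\|g^{-1}A\|=t+1/t$, and for the vectors actually used ($f_\infty$ of compact support, or the spherical vector with $f^\circ_\infty(g^{-1}A)=u^{-3/2}K_{3/2}(u)$, $u=t+1/t$) the function decays rapidly. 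Since $h$ has moderate growth, $\int_{D^1(\A)}|f(g^{-1}A)\bar h(g)|\,dg$ converges; unfolding applied to absolute values then shows $\int_{D^1(F)\backslash D^1(\A)}\sum_B|\theta(f)_B(g)\bar h(g)|\,dg$ is finite as well, and Fubini--Tonelli legitimizes the formal computation. You should replace your domination scheme with this direct local estimate of the unfolded integral.
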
 
\begin{proof} 
Since $\omega_2(F)$ is a principal homogeneous $D^1(F)$-space, the identity formally follows by unfolding the left hand side and using 
the formula for $ \theta(f)_{A}(g)$ as a product of local functionals, given above. Hence it remains to discuss the absolute convergence. 
\vskip 5pt

Let $f=\otimes_v f_v$. 
With $A$ as above, and $g\in \SL_2(F_v)$, $g^{-1} A$ is obtained from $A$ by replacing $x$ by $xg$.  As a quick explanation of the claimed absolute convergence, observe that 
$g\mapsto g^{-1}A$ is a closed embedding of $\SL_2(F_v)$ into $\Omega_v$, hence $g\mapsto f_v( g^{-1}A)$ is Schwartz function on $\SL_2(F_v)$.  
Since $h$ is of moderate growth the integral is clearly absolutely converging. 
\vskip 5pt

In more details,  assume firstly that $v$ is a finite place. Since $f_v \in \Pi_v$ is supported in a lattice in $J_v$, due to $N_v$-smoothness, 
it follows that  $g\mapsto f_v(g^{-1} A) $ is a compactly supported function on $\SL_2(F_v)$. Moreover, 
let $S$ be a finite set of places containing all archimedean places such that for $v\notin S$, all data is unramified: $D(F_v)$ is split,  $\lambda\in O_v^{\times}$,  $a\in \GL_2( O_v)$, 
$\psi_v$ has conductor $O_v$,  $f_v=f_v^{\circ}$, and $h$ is right $\SL_2(O_v)$-invariant. Here $O_v$ is the maximal order in $F_v$. 
It follows from Theorem \ref{T:WS} that $g\mapsto f^{\circ}_v(g^{-1} A) $ is the characteristic function of $\SL_2(O_v)$ for all $v\notin S$. Thus if we normalize the local measures 
so that $\vol( \SL_2(O_v))=1$ for all $v\notin S$, then 
\[ 
\int_{D^1(\mathbb A)} |f(g^{-1} A) \bar h(g)| ~dg = \int_{D^1(\mathbb A_S))} |f_S(g^{-1} A) \bar h(g)| ~dg 
\] 
where the subscript $S$ denotes the product of the local data over all places $v\in S$. 
\vskip 5pt

Consider now a real place. 
Let $g\in \SL_2(\mathbb R)$. 
 Assume that $g$ belongs to the double coset of the diagonal matrix 
 $\left(\smallmatrix t & 0 \\ 0 & 1/t \endsmallmatrix \right)$, $t>0$, in the Cartan decomposition of $ \SL_2(\mathbb R)$. Let $u=t+1/t$. 
  If we assume, for simplicity, that 
 $\lambda=1$, so $a$ in $x=(0,a)$ can be taken the identity matrix, then 
  $||xg||^2 = t^2 + 1/t^2$ (on the nose) and $||g^{-1}A||= t+1/t=u$. Presumably $N_{\infty}$-smoothnes of $f_{\infty}$ implies that every $f_{\infty}\in \Pi_{\infty}$ has a 
 rapid decrease on $\Omega_{\infty}$, and hence $g\mapsto f_v(g^{-1} A)$ has rapid decrease 
 on $\SL_2(\mathbb R)$, however, this statement is obvious for functions $f_{\infty}$ that we shall use, of compact support on $\Omega_{\infty}$ and the spherical 
 function, $f^{\circ}_{\infty}(g^{-1} A)= u^{-3/2} K_{3/2}(u)$, since the Bessel function $K_{3/2}(u)$ is rapidly decreasing as $u\rightarrow \infty$. 
 \end{proof} 
 
 We are now ready to prove the non-vanishing of the global theta lift. Assume firstly that $h$ is a cusp form. Then we have shown that 
\[ 
\Theta(f,h)_{N_D, \psi_2}(1)=  \int_{D^1(\mathbb A_S)} f_S(g^{-1} A) \bar h(g) ~dg 
\] 
for some large finite set of places.  Since for every $v\in S$ the local $f_v$ can be an arbitrary compactly supported smooth
function on $\Omega_v$ the integral will not vanish for some choice of data. Now consider the regularized theta integral 
$\Theta( z\cdot f, h)$, where $h$ is in an automorphic form, not necessarily cuspidal, and $z$ is an element of the Bernstein center of $\SL_2(F_v)$ (see the next section for the construction of $z$). The corresponding 
Fourier coefficient is 
\[ 
\Theta(z\cdot f,h)_{N_D, \psi_2}(1)=  \int_{D^1(\mathbb A)} (z\cdot f)(g^{-1} A) \bar h(g) ~dg.  
\] 
 Let $K_v$ be a sufficiently small open compact subgroup of  $\SL_2(F_v)$ such that $f_v$ is $K_v$-invariant.  Then 
 $z\cdot f_v=\alpha\cdot f_v$ where $\alpha$ is a  $K_v$ bi-invariant, compactly supported function on $\SL_2(F_v)$. 
  Let $\alpha^{\vee}(g)=\bar\alpha(g^{-1})$ and define $z^{\vee}\cdot h= \alpha^{\vee} \cdot h$. 
   Using the convergence guaranteed by Lemma \ref{L:convergence},
 \[
 \int_{D^1(\mathbb A)} (z\cdot f)(g^{-1} A) \bar h(g) ~dg = \int_{D^1(\mathbb A)} f(g^{-1} A) (\overline{z^{\vee}\cdot  h}) (g) ~dg, 
 \] 
  and this can again be arranged to be non-zero, provided $z^{\vee}\cdot  h\neq 0$. Hence we have proved the following: 
  
  \begin{theorem} \label{T:non_vanishing} 
  If $h$ is a non-zero cusp form on $D^1(\mathbb A)$, then $\Theta(f,h)\neq 0$ for some $f\in \Pi$. If $h$ is an automorphic form, not necessarily 
  cuspidal such that $z^{\vee}\cdot  h\neq 0$, then $\Theta(z\cdot f,h)\neq 0$ for some $f\in \Pi$. 
   \end{theorem}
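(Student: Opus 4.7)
The plan is to compute the $\psi_2$-Fourier coefficient $\Theta(f,h)_{N_D,\psi_2}(1)$ at the identity and to arrange for it to be nonzero by a judicious choice of test data. This reduces the global non-vanishing problem to a purely local selection problem, and the two statements of the theorem are handled by the same template, with the regularized version requiring only one extra adjoint move.

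Starting from the definition of $\Theta(f,h)$, the coefficient in question is a double integral over $N_D(F)\backslash N_D(\mathbb A)$ and $D^1(F)\backslash D^1(\mathbb A)$. Assuming absolute convergence---provided by cuspidality of $h$, or in the second case by the regularizing Bernstein-center element $z$---I would interchange the order of integration and apply Lemma \ref{L:global} to replace $\theta(f)_{N_D,\psi_2}(g)$ by the sum $\sum_{B \in \omega_2(F)}\theta(f)_B(g)$. Since $\omega_2(F)$ is a principal homogeneous space for $D^1(F)$, fixing a base point $A$ lets me unfold the sum-integral into a single integral over $D^1(\mathbb A)$. Using Theorem \ref{T:model}, which identifies $\theta(f)_A(g)$ with a product of local functionals, this becomes
\[ \Theta(f,h)_{N_D,\psi_2}(1) = c_A \int_{D^1(\mathbb A)} f(g^{-1}A)\,\bar h(g)\, dg, \]
absolutely convergent by Lemma \ref{L:convergence}.

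For cuspidal $h$, I would fix a finite set $S$ of places outside of which all data is unramified. By Theorem \ref{T:WS}, the function $g \mapsto f_v^\circ(g^{-1}A)$ for $v \notin S$ is the characteristic function of $\SL_2(O_v)$, so the integral collapses to one over $D^1(\mathbb A_S)$. For $v \in S$, the map $g \mapsto g^{-1}A$ is a closed embedding of $\SL_2(F_v)$ into $\Omega_v$, so as $f_v$ varies over $C_c^\infty(\Omega_v) \subset \Pi_v$, the restricted function $g \mapsto f_v(g^{-1}A)$ sweeps out all of $C_c^\infty(\SL_2(F_v))$. Since $h$ is a nonzero continuous function, I can localize $f_S$ near a point where $h$ does not vanish, producing a nonzero integral.

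For general automorphic $h$, the element $z$ acts on $f_v$ as convolution by a compactly supported bi-$K_v$-invariant function $\alpha$, where $K_v$ fixes $f_v$. An adjoint manipulation with $\alpha^\vee(g) = \overline{\alpha(g^{-1})}$ transfers the action to $h$:
\[ \int_{D^1(\mathbb A)} (z\cdot f)(g^{-1}A)\,\bar h(g)\,dg = \int_{D^1(\mathbb A)} f(g^{-1}A)\,\overline{(z^\vee\cdot h)(g)}\,dg. \]
The hypothesis $z^\vee\cdot h \neq 0$ then puts us back in the previous situation with $z^\vee \cdot h$ playing the role of the cusp form, and the same test-vector argument forces the integral to be nonzero for some choice of $f$. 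The main obstacle is not the non-vanishing computation itself, which is bookkeeping once Lemmas \ref{L:global} and \ref{L:convergence} are in hand; the genuine technical issue is securing the absolute convergence needed to interchange integrations and to unfold in the non-cuspidal case, which is precisely what motivates the construction of $z$ in the next section.
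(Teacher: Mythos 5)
Your proposal follows the paper's argument essentially verbatim: compute the $\psi_2$-Fourier coefficient, unfold via the $D^1(F)$-homogeneity of $\omega_2(F)$ and Lemmas \ref{L:global} and \ref{L:convergence}, reduce to a semilocal integral over $D^1(\mathbb A_S)$ using Theorem \ref{T:WS} at unramified places, choose test vectors $f_v\in C_c^\infty(\Omega_v)$ at places in $S$, and handle the regularized case by the adjoint move passing $z$ to $z^\vee\cdot h$. No substantive differences from the paper's proof.
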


\smallskip 
\noindent 
{\bf Remark:}  The main reason for introduction of the regularized theta lift is to be able to handle the lift of $h=1$ in the case when $D$ is split. In this case 
we can take all data to be simplest possible, i.e. $\lambda =1$,  the matrix $A$ with $a=(0,x)$ with $x$ identity matrix, etc. 
Then non-vanishing of the theta lift is achieved with the spherical vector $f^{\circ}_{\infty}$  at any real place.  Indeed, 
if $g\in \SL_2(\mathbb R)$  belongs to the double coset of the diagonal matrix 
 $\left(\smallmatrix t & 0 \\ 0 & 1/t \endsmallmatrix \right)$, $t>0$, in the Cartan decomposition of $ \SL_2(\mathbb R)$. 
 Then $||xg||^2 = t^2 + 1/t^2$ and $||g^{-1}A||= t+1/t$. Write $u=t + 1/t$ so that
 \[ 
 du= (t- \frac{1}{t}) \frac{dt}{t}.
 \] 
 Using the formula for the spherical vector given by Theorem \ref{T:DS} and the formula for the Haar measure on  $\SL_2(\mathbb R)$ with respect to the Cartan decomposition, we have 
 \[ 
\int_{ \SL_2(\mathbb R)} f_{\infty}^{\circ}(g^{-1} A) ~dg = \int_2^{\infty} u^{-3/2} K_{3/2}(u) ~du >0. 
 \] 
 It would be interesting to compute the value of this integral. 

\vskip 10pt

\section{Regularizing Theta} 

Following some ideas of Kudla and Rallis \cite{KR}, the first author  introduced in \cite{G} a regularized theta integral for a particular exceptional dual pair. 
We simplify the arguments so that regularization is now available to a 
wider class of examples. The notations used in this largely self-contained section will differ from those of the other sections of this paper.
We first recall some basic facts about the notions of uniform moderate growth and rapid decay.
\vskip 10pt

\subsection{Moderate growth and rapid decay} 
Let $k$ be a number field and let $\mathbb A$ denote the corresponding ring of adel\'es. Let $G$ be a reductive group over $k$. 
In order to keep notation simple, we shall assume that $G$ is split with a finite center. 
Fix a maximal split torus $T$ and a minimal parabolic subgroup $P$ containing $T$. Let $N$ be the unipotent radical of $P$. 
We have a root system $\Phi$, obtained by $T$ acting on 
the Lie algebra $\mathfrak g$ of $G$ and a set of simple roots in $\Phi$ corresponding to the choice of $P$.
\vskip 5pt

 If we fix a place $v$ of $k$, then $G_v$ will denote the group of 
$k_v$-points of $G$. Similarly, we shall use the subscript $v$ to denote various other subgroups of $G_v$. A smooth function $f$ on $G(\mathbb A)$ is of uniform moderate growth 
if there exists  an integer $m$ such that for every $X$ in the enveloping algebra of $\mathfrak g$ there exists a constant $c_X$ such that 
\[ 
|R_X f(g)| \leq c_X || g|| ^m 
\] 
where $R_X$ denotes the action of the enveloping algebra on smooth functions obtained by the differentiation from the right and $||g||$ is a height function on $G$ defined in
\cite[page 20]{MW}. 
Since there exists a constant $c$ such that $||gh|| \leq c ||g||\cdot   ||h||$ for all $g,h\in G(\mathbb A)$, it is easy to see that the constants $c_X$ 
for the right-translates $R_h f$ of $f$ are of moderate growth in $h$, more precisely, of growth $||h||^{m+d}$ where $d$ is the degree of $X$. 

\vskip 5pt 

Now assume that $v$ is a real or complex place of $k$. Let $P_v=M_v A_v N_v$ be the Langlands decomposition of $P_v$. For $\epsilon >0$, let 
 $A_{v,\epsilon} $ be a cone in $A_v$ consisting of $a\in A_v$ such that $\alpha(a) > \epsilon$ for all simple roots $\alpha$.  
 Let $A$ be the product of the $A_v$'s  and  let $A_{\epsilon}$ be the product of the $A_{v,\epsilon}$'s  over all 
 real and complex places $v$. Let $\omega_N$ be a compact set in $N(\mathbb A)$ containing the identity element. Let $K$ be a product of maximal compact subgroups 
 $K_v$ of $G_v$ where we have taken $K_v$ to be hyperspecial for all $p$-adic places. Then 
 \[ 
 S=\omega_N  A_{\epsilon} K 
 \] 
 is a Siegel domain in $G(\mathbb A)$. If $\omega_N$ is sufficiently large, and $\epsilon$ is sufficiently small, then $G(\mathbb A)= G(k) S$. 

\vskip 5pt

Let $\Pi$ be an automorphic representation of $G$. Then any smooth $f\in \Pi$ is of uniform moderate growth. 
In terms of the Siegel domain $S$,  this means the following.  Let $\rho_P : A\rightarrow \mathbb R^+$ be the modular character.
There exists an integer $m$ such that for every $X$ in the enveloping algebra of $\mathfrak g$, there exists a constant $c_X$ such that 
\[ 
|R_X f(nak)| \leq c_X \cdot  \rho_P(a)^m 
\] 
on $S$, where the constants $m$ and $c_X$ are not necessarily the same, but related to those above. 

\vskip 5pt

Now let $Q\supseteq P$ be a maximal parabolic with a unipotent radical $U\subseteq N$, corresponding to a simple root $\alpha$. We have a standard 
 Levi factor $L$ of $Q$ defined as the centralizer of a fundamental co-character $\chi : \mathbb G_m \rightarrow T$ (or a power of it). In any case, any element in 
 $A_v$ is uniquely written as a product $\prod_{\chi} \chi(t_{\chi})$, over all fundamental co-characters $\chi$, where $t_{\chi}\in \mathbb R^+$.  
 The element  $\prod_{\chi} \chi(t_{\chi})$ is contained in the cone $A_{v,\epsilon}$ if $t_{\chi}> \epsilon$ for all $\chi$. 
 Let $f_U$ be the constant term of $f$ along $U$. Then, if $f$ has a uniform moderate growth, by   \cite[page 30, Lemma]{MW} 
 for every positive  integer $i$,  there is a constant $c_i$ such that 
\[ 
| (f-f_U)(nak)| \leq  c_i \cdot \rho_P(a)^m \cdot  \alpha^{-i}(a) 
\] 
on $S$. 
In particular, if $f_U=0$,   then $f$ is rapidly decreasing in the variable $t_{\chi}$.
 If $f_U=0$ for all maximal parabolic subgroups, then $f$ is rapidly decreasing on $S$, and that's how the rapid decrease of cusp forms is established. The proof of 
 \cite[page 30, Lemma]{MW} 
 involves integration by parts, so it is easy to see that the constants $c_i$ for the right-translates $R_h f$ of $f$ are of moderate growth in $h$, more precisely, 
 of the growth $||h||^{m'}$ where $m'$ depends on $i$:  a larger $i$ will demand a larger $m'$. 
 
 \smallskip 
 We highlight another important issue here.  Assume that $f$ belongs to an automorphic representation $\pi$. Then a Frech\' et space topology on $\pi$ is given by the family of 
 semi-norms 
 \[ 
 ||f||_X = \sup_{nak\in S} |R_X f(nak)|  \cdot \rho_P(a)^{-m}
 \] 
 where $m$ depends on $\pi$ and works for all $X$ in the enveloping algebra. Then \cite[page 30, Lemma]{MW} says that convergence in these seminorms implies 
 convergence in the seminorm  
 \[ 
 \sup_{nak\in S} |(f-f_U)(nak)| \cdot  \rho_P(a)^{-m} \cdot \alpha^{i}(a).
 \] 
  This observation will later imply that the regularized theta integral gives a continuous pairing. 
\vskip 10pt

\subsection{Restricting to a subgroup}

Let $G_1 \times G_2 \subseteq G$ a dual pair in $G$.  Let $T_1$ be a maximal split torus in $G_1$ and fix a minimal parabolic subgroup $P_1$ containing $T_1$. 
Without loss of generality,  we can assume that $T_1\subseteq T$ and $P_1\subseteq P$. Let $Q_1\supseteq P_1$ be a maximal parabolic subgroup of $G_1$. Let 
$\chi_1: G_m \rightarrow T_1$ be the corresponding fundamental cocharacter (or a multiple of which) so that the centralizer of $\chi_1$ in $G_1$ is a Levi factor $L_1$ of $Q_1$.  
Assume that: 
\vskip 5pt

\noindent \underline{\bf  Hypothesis}:
{\it For every fundamental cocharacter $\chi_1$ of $G_1$, there is a fundamental cocharacter $\chi$ of $G$  such that $\chi_1$ is a multiple of $\chi$.}
 
\vskip 10pt

This hypothesis holds in the following examples:
\vskip 5pt

\begin{itemize}
\item  the dual pair $G_1 \times G_2 = D^1 \times G_D = \SL_2 \times G_D$ studied in this paper;  here $G_1=\SL_2$ corresponds to the highest root and  the highest weight is also a fundamental weight  for $\mathbf E_7$ (the ambient group $G$). 
\vskip 5pt

\item the split exceptional dual pairs in $G$ of type $\mathbf E_n$ where one member of the dual pair is the type $\mathbf G_2$, see \cite{LS}. In particular, this includes the case $\PGL_3 \times G_2$ treated in \cite{G}. 
\end{itemize}

\vskip 5pt

The above hypothesis have the following consequences:
\vskip 5pt

\begin{itemize}
\item It implies that the cone $A_{1,\epsilon}$ sits as a subcone of $A_{\epsilon}$; in fact, it is a direct factor in the above cases. In particular, we have an inclusion of Siegel domains $S_1 \subset S$.

 \vskip 5pt
 
 \item Given a fundamental cocharacter $\chi_1$ of $G_1$, the associated fundamental cocharacter $\chi$ of $G$ given by the hypothesis corresponds to a simple root and so determines a maximal parabolic subgroup  $Q_{\chi_1}  = L_{\chi_1}U_{\chi_1}$ of $G$.  In the following, we will sometimes write $U = U_{\chi_1}$ to simplify notation. 
 \end{itemize}
\vskip 5pt
 
Now let $v$ be a $p$-adic place and $z$ an element of the Bernstein's center of $G_1(k_v)$. Then $z\cdot \Pi$ is naturally a $G_1(\mathbb A)\times G_2(\mathbb A)$-submodule of 
$\Pi$.  For a fixed cocharacter $\chi_1$ of $G_1$, with associated maximal parabolic $Q = LU$, assume that  
\[  z \cdot \Pi_v  \subset {\rm Ker}\left( \Pi_v \longrightarrow (\Pi_v)_{U(k_v)} \right). \]
We claim that this implies  that $(z\cdot f)_U=0$ on  $G_1(\mathbb A)\times G_2(\mathbb A)$. 
 Indeed, if $g\in G_1(\mathbb A)\times G_2(\mathbb A)$, then 
\[ 
(z\cdot f)_U(g) = (R_g(z\cdot f))_U (1) = (z\cdot R_g(f))_U(1)=0 
\] 
where $R_g$ denotes the right translation by $g$. Here, the second equality holds since $z$ and $R_g$ commute, and the third equality holds since the projection of $z\cdot \Pi$ on $\Pi_U$ vanishes. 
Write $g=g_1\times g_2 \in G_1(\mathbb A) \times G_2(\mathbb A)$ and assume that $g_1\in S_1$. Using the hypothesis that $S_1 \subseteq S$ and the estimates for 
$|R_{g_2}(z\cdot f)- (R_{g_2}(z\cdot f))_U|$ on $S$ from the last subsection, it follows that  
\[ 
(z\cdot f)(g_1\times g_2)= R_{g_2} (z\cdot f) (g_1) 
\] 
is of moderate growth in both variables and  in the variable $g_1\in S_1$,  it is rapidly decreasing in the direction of the fundamental co-character $\chi_1$. More precisely, 
we summarise the discussion in this subsection in the following proposition.
\vskip 5pt

\begin{prop} \label{P:regularize} 
Assume that:
\vskip 5pt

\begin{itemize}
\item[(i)]  For every fundamental cocharacter $\chi_1$ of $G_1$, there is a fundamental cocharacter $\chi$ of $G$  such that $\chi_1$ is a multiple of $\chi$, which in turn determines a maximal parabolic subgroup $Q_{\chi_1} = L_{\chi_1} U_{\chi_1}$;
\vskip 5pt

\item[(ii)]  One can find an element $z$ in the Bernstein center of $G_1(k_v)$ such that for every fundamental cocharacter $\chi_1$ of $G_1$,   the natural projection $\Pi_v$ on $(\Pi_v)_{U_{\chi_1}(k_v)}$ vanishes on $z\cdot \Pi_v$ for every fundamental co-character $\chi_1$ of $G_1$. 
 \end{itemize}
\vskip 5pt

\noindent Then for every integer $n$, there exists an integer $m$ and a constant $c$ such that 
\[ 
|(z\cdot f)(g_1\times g_2)|\leq c || g_1 ||^{-n} || g_2||^m 
\] 
for all $g_1 \in S_1$ and $g_2 \in G_2(\mathbb A)$.
\end{prop}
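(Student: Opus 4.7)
The plan is to combine the Bernstein center hypothesis (ii) with the Moeglin--Waldspurger decay estimate recalled in the preceding subsection, applied uniformly over right-translates by $g_2$.

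First, I would promote the local vanishing hypothesis (ii) to a global statement: for each fundamental cocharacter $\chi_1$ of $G_1$, with associated simple root $\alpha$ and maximal parabolic $Q_{\chi_1} = L_{\chi_1} U_{\chi_1}$ of $G$ furnished by (i), I claim that $(z \cdot f)_{U_{\chi_1}} \equiv 0$ on $G_1(\mathbb A) \times G_2(\mathbb A)$. This is sketched just before the statement: the constant term functional $\varphi \mapsto \varphi_U(1) = \int_{U(k)\backslash U(\mathbb A)} \varphi(u)\,du$ is continuous and $U(\mathbb A)$-invariant, and so factors through the local Jacquet module $(\Pi_v)_{U(k_v)}$ at the distinguished place $v$; hypothesis (ii) then ensures that it annihilates $z \cdot \Pi$. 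For a general point $g = g_1 g_2$, one uses
\[ (z \cdot f)_U(g) = (R_g(z \cdot f))_U(1) = (z \cdot R_g f)_U(1) = 0, \]
where the middle equality records that $z$ commutes with $R_{g_1}$ by definition of the Bernstein center of $G_1(k_v)$, and with $R_{g_2}$ because $G_2$ centralizes $G_1$ inside $G$.

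Next, I would apply the Moeglin--Waldspurger estimate to the right-translate $R_{g_2}(z \cdot f)$. Since the constant term along $U = U_{\chi_1}$ of $z \cdot f$ vanishes, so does that of $R_{g_2}(z \cdot f)$, and the lemma delivers, for every positive integer $i$,
\[ |R_{g_2}(z \cdot f)(nak)| \le c_i(g_2) \cdot \rho_P(a)^m \cdot \alpha^{-i}(a) \]
for $nak \in S$. By hypothesis (i), the Siegel domain $S_1$ sits inside $S$ and the cone $A_{1,\epsilon}$ is a subcone of $A_{\epsilon}$; writing $g_1 = n_1 a_1 k_1 \in S_1$ with $a_1 = \prod_{\chi_1} \chi_1(t_{\chi_1})$, the factor $\alpha^{-i}(a_1)$ provides arbitrary polynomial decay in the coordinate $t_{\chi_1}$. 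Running over the fundamental cocharacters of $G_1$ and multiplying the resulting estimates produces decay faster than any polynomial in $\|g_1\|$, uniformly on $S_1$.

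The final step, and the main technical obstacle, is controlling the constants $c_i(g_2)$ in $g_2$. Here I would invoke the Fr\'echet continuity principle highlighted at the end of the preceding subsection: the constants appearing in the Moeglin--Waldspurger lemma are continuous seminorms on $\Pi$ viewed as a smooth Fr\'echet $G(\mathbb A)$-module of moderate growth, and right-translation by $h$ on this Fr\'echet space is bounded by $C \|h\|^{m'}$ times a fixed seminorm (a consequence of the submultiplicativity $\|gh\| \le c\|g\|\|h\|$). Applied to $\varphi = z \cdot f$ with $h = g_2$, this upgrades $c_i(g_2)$ to a bound of the form $c \cdot \|g_2\|^{m}$ for some integer $m$ depending on the desired decay exponent $n$, yielding
\[ |(z \cdot f)(g_1 g_2)| \le c \, \|g_1\|^{-n} \, \|g_2\|^{m} \]
for $g_1 \in S_1$ and $g_2 \in G_2(\mathbb A)$. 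The delicate point is verifying that the proof of the Moeglin--Waldspurger lemma, which proceeds by integration by parts against a compactly supported function on $U(k_v)$, indeed respects this Fr\'echet continuity; but this is exactly the content of the observation recorded immediately before the statement.
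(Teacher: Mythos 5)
Your proposal is correct and follows essentially the same route as the paper: the vanishing of $(z\cdot f)_{U_{\chi_1}}$ via commutation of $z$ with right translation, the Moeglin--Waldspurger constant-term estimate on the ambient Siegel domain, the inclusion $S_1\subset S$ from hypothesis (i), and the Fr\'echet-continuity observation to bound the constants by powers of $\|g_2\|$ are exactly the ingredients of the paper's discussion preceding the proposition. Your explicit remark that one multiplies the one-variable decay estimates over all fundamental cocharacters of $G_1$ to get rapid decay in $\|g_1\|$ is a small but helpful amplification of a step the paper leaves tacit.
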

\vskip 5pt

 In the context of the above proposition, a small trade-off here is that increasing $n$ can be obtained only by increasing $m$ at the same time. But this is still good enough to define 
regularized theta lift which produces functions of moderate growth as output. 
To exploit the proposition, 
it remains then to construct an appropriate $z$. We also need to assure that $z\cdot\Pi_v\neq 0$ and this may not be possible always, as it will be discussed in the next subsection. 
\vskip 10pt

\subsection{Bernstein's center}

We work here locally over a $p$-adic field. Thus all our groups are local and we drop the subscript $v$. For simplicity, we shall discuss only the Bernstein center for the Bernstein component  containing the trivial representation of $G_1$. 

\vskip 5pt

To that end, let  $\hat{T}_1$ be the complex torus dual to $T_1$, and let $W(G_1)$ be the Weyl group of $G_1$. 
The Bernstein's center $Z(G_1)$ of the said component is isomorphic to 
the algebra of $W(G_1)$-invariant regular functions on $\hat{T}_1$. Similarly, the Bernstein's center  $Z(L_1)$ of the Levi factor $L_1$ is isomorphic to 
the algebra of $W(L_1)$-invariant regular functions on $\hat{T}_1$. In particular, we have a natural map $j :  Z(G_1) \rightarrow Z(L_1)$. Let $\pi$ be a smooth representation 
of $G_1$, and  let $p: \pi \rightarrow \pi_{U_1}$ be the natural projection onto the normalised Jacquet module $\pi_{U_1}$. Then, for every $z\in Z(G_1)$ and $v\in \pi$, we have 
\[ 
p(z \cdot v)= j(z) \cdot (p(v)).  
\] 
\vskip 5pt

Now let $\Pi$ be a smooth representation of $G$. Recall that we want to find a non-zero $z\in Z(G_1)$ such that $z\cdot \Pi$ is in the kernel of the projection of $\Pi$ onto $\Pi_{U}$. 
Since $\Pi_{U}$ is a quotient of $\Pi_{U_1}$, and $j(z)$ is acting on $\Pi_U$,  
 we need to find $z$ such that $j(z)=0$ on $\Pi_U$.  This is always possible if $\Pi$ is a finite length $G$-module, in which case
 $\Pi_U$ is a finite length $L$-module. In particular, the center of $L$ acts finitely on $\Pi_U$. Hence, the center of $L_1$ (= the center of $L$) acts finitely on $\Pi_U$ and
the  $Z(L_1)$-spectrum of $\Pi_U$ is contained in a proper subvariety of $\hat{T}_1$. In particular, any non-zero $W(G_1)$-invariant function $z$ vanishing on the subvariety will have the desired property that $j(z)$ vanishes on $\Pi_U$.  Hence,  a non-trivial $z$ with the desired property always exists. 
\vskip 5pt

A potential trouble is that such a $z$ may kill the whole $\Pi$.  
However, if $G_1$ is a smaller group (still split) and 
$\Pi$ the minimal representation, then every Whittaker generic representation of $G_1$ appears as a quotient of $\Pi$ (at least int he family of examples we have in mind). Hence, if $z$ kills $\Pi$, then $z$ kills all 
generic representations of $G_1(k_v)$ and hence $z$ must be equal to 0.   Therefore the desired regularization can be carried out in this case. 

\vskip 5pt 
Let's look at our dual pair $G_1\times G_2= \SL_2  \times G_D$ in $G$, and $\Pi$ is the minimal representation. 
 The Bernstein's center is 
\[ 
Z(G_1)=\mathbb C[x^{\pm1}]^{S_2}
\] 
where $S_2$ acts by permuting $x$ and $x^{-1}$. Let 
\[ 
z= (x -q^3)(x^{-1} -q^3) (x -q^5)(x^{-1} -q^5)
\] 
where $q$ is the order of the residual field. This element satisfies our requirement, since $j(z)$ vanishes on $\Pi_U$ by Theorem \ref{T:filtration}. 

\subsection{Global $\Theta(1)$} 

Let $z$ be the element in the Bernstein center of $G_1 = \SL_2$, as in the previous subsection. We define $\Theta(1)$ as the space of automorphic functions, $g_D\in G_D(\A)$, 
\[ 
\Theta(f)(g_D)=\int_{D^1(F)\backslash D^1(\mathbb A)} \theta(z\cdot f)(g_D g)  ~dg.
\]
where we assume that $f_{\infty}$ is $K_{\infty}$-finite. (We assume this finiteness since in the next section we will determine the local lift at real places in the 
language of $(\mathfrak g,K)$-modules.) We want to show that $\Theta(1) \neq 0$, using Theorem  \ref{T:non_vanishing}. The input in the theta kernel is $h=1$, 
so the first thing is to show that $z^{\vee}\cdot 1 \neq 0$. In the case at hand, $z^{\vee}$ is obtained from $z$ by replacing $x$ by $x^{-1}$ in the above expression of 
$z$. In particular, $z=z^{\vee}$.  Moreover, $z$ acts on the trivial representation by the scalar obtained by substituting $x=q$, and this is non-zero. It remains to argue that 
we can arrange $f_{\infty}$ to be $K_{\infty}$-finite. This follows by  the continuity of the regularized theta integral, which ensures that  the non-vanishing for smooth $f$ 
implies the non-vanishing for $K_{\infty}$-finite vectors. Alternatively, by the remark following Theorem \ref{T:non_vanishing}, non-vanishing can be achieved 
with $f_{\infty}=f^{\circ}_{\infty}$ the spherical vector.

 \section{Correspondence for real groups} 
 
 In this section, we work over the field $\R$ of real numbers.  The goal of this section is to determine $\Theta(1)$ explicitly. For this,
 we need to consider  various cases separately.   
 Indeed, recall that $G$ is arising from an Albert algebra via the Koecher-Tits construction. There are two real forms of 
 octonion algebra, the classical Graves algebra and its split form, and these two algebras can be used to define two Albert algebras of $3\times 3$-hermitian symmetric 
 matrices with coefficients in the octonion algebra.  The group $G$ is split or of the relative rank $3$  depending on whether the octonion algebra is split or not. 
 Moreover,  it will be  convenient to work with the simply connected cover of $G$ and the $(\mathfrak g, K)$-module corresponding to the minimal representation. 

 \vskip 5pt

\subsection{\bf Non-split $\mathbb O$.}
  Assume first that $\mathbb O$ and hence $G$ is not split. Then the minimal representation, when restricted to the simply connected cover 
 (simply connected in the sense of algebraic groups) breaks up as $\Pi=\Pi_{1,0} \oplus \Pi_{0,1}$, a sum of a holomorphic and an anti-holomorphic irreducible representation. 
 This sum is the socle of the degenerate   principal series $I(-5)$.  
 
 \vskip 5pt
 
 In this case, $D$ is necessarily nonsplit.
 The socle of $I_D(-1)$, considered a representation of the simply connected cover of $G_D$ is a direct sum of three 
 representations $\Sigma_{2,0} \oplus \Sigma_{1,1} \oplus \Sigma_{0,2}$, a holomorphic, a spherical and an anti-holomorphic representation, respectively. 
 One has:
 \vskip 5pt
 
 \begin{thm} \label{T:real1}
  If $\mathbb O$ is non-split (so $G$ is not split), we have 
 \[ 
 \Pi_{1,0}^{D^1}\cong  \Sigma_{2,0} \quad \text{ and } \quad  \Pi_{0,1}^{D^1}\cong \Sigma_{0,2}. 
 \] 
 In particular, $\Theta(1) = \Sigma_{2,0} \oplus \Sigma_{0,2}$.
 \end{thm}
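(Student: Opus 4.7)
The plan is to mimic the $p$-adic proof of Corollary \ref{C:theta1}, adapted to the real setting. Since $\mathbb O$ is the Cayley division octonion algebra at our real place, every quaternion subalgebra $D$ is automatically nonsplit, so $D^1 \cong \SU(2)$ is compact. Hence taking $D^1$-invariants is exact, $\Theta(1) = \Pi^{D^1}$ is a direct summand of $\Pi$, and the decomposition $\Pi = \Pi_{1,0} \oplus \Pi_{0,1}$ descends to
\[
\Theta(1) = \Pi_{1,0}^{D^1} \oplus \Pi_{0,1}^{D^1}.
\]
The embedding $\Pi \hookrightarrow I(-5)$ of Harish-Chandra modules, composed with restriction of functions from $G$ to $G_D$, gives a $D^1$-invariant, $(\mathfrak{g}_D, K_D)$-equivariant map $\Pi \to I_D(-1)$ that factors through $\Theta(1)$. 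The task is to show that this map identifies each of the two summands with the advertised irreducible constituent of the socle.

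First I would establish the real analog of Lemma \ref{L:key}, working with the Harish-Chandra module of $\Pi$ realized on $\Omega$. The orbit picture on rank-one elements of $J$ with prescribed restriction of $\psi_A$ to $N_D$ is identical to the one in the $p$-adic argument, yielding $\Pi_{\mathfrak{n}_D, \psi_3} = 0$ and a $D^1$-equivariant description of $\Pi_{\mathfrak{n}_D, \psi_2}$ whose space of $D^1$-invariants is one-dimensional. Consequently the image of $\Theta(1)$ in $I_D(-1)$ has $N_D$-rank at most $2$ and therefore lies inside the socle $\Sigma_{2,0} \oplus \Sigma_{1,1} \oplus \Sigma_{0,2}$, since the unique irreducible quotient of $I_D(-1)$ has $N_D$-rank $3$.

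Next I would separate the holomorphic and anti-holomorphic summands by a $K$-type argument. Because $\Pi_{1,0}$ is generated by its one-dimensional holomorphic lowest $K$-type, every $K$-type occurring in it is a holomorphic weight, so upon restriction to $K_D$ the image of $\Pi_{1,0}^{D^1}$ in the socle cannot meet the spherical piece $\Sigma_{1,1}$ (whose minimal $K_D$-type is trivial) nor the anti-holomorphic piece $\Sigma_{0,2}$, and so must land in $\Sigma_{2,0}$; the symmetric argument places $\Pi_{0,1}^{D^1}$ inside $\Sigma_{0,2}$. Nonvanishing of both maps is forced by the previous step, since the one-dimensional $\Theta(1)_{\mathfrak{n}_D, \psi_2}$ must be detected in the image and only the holomorphic and anti-holomorphic summands are available. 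Injectivity is then handled exactly as in Corollary \ref{C:theta1}: any kernel would be a $(\mathfrak{g}_D, K_D)$-submodule on which $\mathfrak{n}_D$ acts trivially, contradicting the Howe-Moore vanishing of $N_D$-invariants in the minimal representation.

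The main obstacle I anticipate is the real analog of Lemma \ref{L:key}. In the $p$-adic case the twisted Jacquet module is controlled by an honest orbit count on $\Omega$ together with $N$-smoothness; in the real setting one must work with $\mathfrak{n}_D$-coinvariants of the Casselman-Wallach globalization and justify that the same orbit picture continues to hold, which amounts to controlling higher $\mathfrak{n}_D$-homology or, equivalently, a careful distributional argument along the rank variety $\Omega$. Once this real Jacquet computation is in place, the remaining steps reduce to $K$-type bookkeeping together with the irreducibility of the constituents $\Sigma_{\bullet,\bullet}$ of the socle.
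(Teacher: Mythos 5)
Your overall structure is reasonable, but it takes a substantially different and longer route than the paper, and it has genuine gaps. The paper's proof is a direct $K_D$-type comparison: one branches the known $K$-types of the minimal representation $\Pi$ (a scalar lowest/highest weight module since $\mathbb O$ is non-split, so $G$ is of Hermitian type $E_{7(-25)}$) to $K_D\times D^1$, takes $D^1$-invariants, and matches the resulting $K_D$-types against those of $\Sigma_{2,0}$ and $\Sigma_{0,2}$ from Sahi's description of the socle of $I_D(-1)$ -- precisely the kind of explicit branching computation carried out in the split case via Gelfand-Zetlin in Theorem \ref{T:real2}. You instead try to transplant the $p$-adic argument of Corollary \ref{C:theta1} via a real analogue of Lemma \ref{L:key}, and this is where the trouble lies.

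Two concrete gaps. First, the real analogue of Lemma \ref{L:key} is not available: the $p$-adic proof rests on the exact sequence $0\to C_c^\infty(\Omega)\to\Pi\to\Pi_N\to 0$, which Theorem \ref{T:model} states only for $p$-adic $F$, and there is no clean description of $\mathfrak n_D$-homology of the Harish-Chandra module of $\Pi$ in terms of an orbit count on $\Omega$. You flag this as "the main obstacle," but without it the rank bound on the image and the one-dimensionality of $\Theta(1)_{\mathfrak n_D,\psi_2}$ are both unproven. (In fact the rank bound is unnecessary: since $D^1$ is compact, $\Theta(1)=\Pi^{D^1}$ is a unitarizable direct summand of $\Pi$, so its image in $I_D(-1)$ is automatically semisimple and hence already lands in the socle.) Second, your non-vanishing argument is not a valid deduction: one-dimensionality of the total $\psi_2$-twisted space does not force both $\Pi_{1,0}^{D^1}$ and $\Pi_{0,1}^{D^1}$ to map non-trivially -- it could in principle be detected by only one summand. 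To show both are non-zero you would either need to track the two distinct sign-choices of $\psi_2$ separately, or fall back on a $K$-type branching computation. But once one is doing $K_D$-type branching to establish non-vanishing, one might as well prove the whole isomorphism by comparing $K_D$-types directly, which is the paper's (much shorter) proof.
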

 In view of the map $\Theta(1) \longrightarrow I_D(-1)$, the above identities are established by checking that the $K_D$-types coincide, and this is an easy check that we shall omit. 
 
 \smallskip 
 
 \subsection{\bf Split $\mathbb O$ but nonsplit $D$}
 
 We move on the case when $\mathbb O$ and hence $G$ is split. Let $K$ be a maximal compact subgroup of $G$, and 
 $\mathfrak g = \mathfrak k \oplus \mathfrak p$ the corresponding  Cartan decomposition of the complexification of the Lie algebra of $G$. Then 
  $\mathfrak k$ is isomorphic to $\mathfrak{sl}_8$. Fixing this isomorphism, we see that as a $K\cong \SU_8/\mu_2$-module, $\mathfrak p$  is isomorphic to $V_{\omega_4}$, where 
  $\omega_4$ is the 4-th fundamental weight.  The minimal representation $\Pi$ is a direct sum of $K$-types $V_{n\omega_4}$, where $n=0,1,2, \ldots $. 
 \vskip 5pt

 We have two cases depending on $D$. 
 Assume in this subsection that $D$ is a division algebra. In this case $D^1\cong \SU_2$ is compact, and embeds into $\SU_8$ as a $2\times 2$ block. The centralizer of $\SU_2$ in 
$K=\SU_8/\mu_2$ is  $K_D\cong \U_6$. The minimal representation $\Pi$ decomposes discretely when restricted to this dual pair. A simple application of the 
Gelfand-Zetlin rule shows that the $K_D$-types of $\Theta(1)$ are multiplicity free and the highest weights of the $K_D$-types which occur are 
\[ 
(x,x,0,0,y,y) 
\] 
where $x\geq 0 \geq y$ are any two integers. Here we are using the standard description of highest weights for $\U_6$ by 6-tuples of non-increasing integers. 
But these are precisely the $K_D$-types of the spherical submodule of $I_1(-1)$,  i.e. the constituent $\Sigma_{1,1}$  in \cite[Theorem C]{Sa93}. This proves 
\vskip 5pt

\begin{thm}  \label{T:real2}  
When $\mathbb O$ is split but $D$ is non-split, one has
 \[ 
\Theta(1)=  \Pi^{D^1}\cong  \Sigma_{1,1}. 
 \] 
 \end{thm}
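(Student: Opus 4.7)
The plan has three parts: reduce to a $K_D$-type problem, compute the $K_D$-types of $\Pi^{D^1}$ by branching, and match with the known $K_D$-types of $\Sigma_{1,1}$.

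First, since $D^1 \cong \SU_2$ is compact, $\Theta(1)$---defined as the maximal quotient of $\Pi$ on which $D^1$ acts trivially---coincides with the space $\Pi^{D^1}$ of $D^1$-fixed vectors, which is a direct $(\mathfrak g, K)$-summand of $\Pi$ and inherits an action of $(\mathfrak g_D, K_D)$. It therefore suffices to identify $\Pi^{D^1}$ as an abstract $G_D$-module.

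Second, starting from the given $K$-type decomposition $\Pi|_K = \bigoplus_{n \geq 0} V_{n\omega_4}$, I would compute the $K_D = \U_6$-types of $\Pi^{D^1}$ by branching each $\SU_8$-irrep $V_{n\omega_4} = V_{(n,n,n,n,0,0,0,0)}$ under $\SU_2 \times \U_6 \hookrightarrow \SU_8/\mu_2$ and extracting $\SU_2$-invariants. By Littlewood--Richardson (equivalently the Gelfand--Zetlin rule), these invariants arise from the $\SU_2$-isotypic pieces of weight $(k,k)$, corresponding to skew tableaux of shape $(n^4)/(k,k)$ for $0 \leq k \leq n$; after tracking the determinantal twist forced by the embedding, the surviving $\U_6$-highest weights are exactly $(x,x,0,0,y,y)$ with $x \geq 0 \geq y$, each with multiplicity one.

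Third, these are precisely the $K_D$-types of the spherical socle constituent $\Sigma_{1,1}$ of $I_D(-1)$ as listed in \cite[Theorem C]{Sa93}. To conclude, I would use the $G_D$-equivariant map $\Pi^{D^1} \to I_D(-1)$ obtained by composing the inclusion $\Pi \hookrightarrow I(-5)$ with restriction of functions from $G$ to $G_D$ and taking $D^1$-invariants. Since $\Pi^{D^1}$ is $K_D$-spherical, its image meets the unique spherical constituent $\Sigma_{1,1}$ of $I_D(-1)$ nontrivially and hence contains it. The exact match of $K_D$-types then forces the map to be an isomorphism $\Theta(1) \cong \Pi^{D^1} \cong \Sigma_{1,1}$.

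The main obstacle is the branching step: although standard, one must carefully handle the determinantal twist so that the $\U_6$-highest weights come out precisely as $(x,x,0,0,y,y)$ and verify the multiplicity-freeness. Once this $K_D$-type list is in hand, the identification with $\Sigma_{1,1}$ is essentially bookkeeping via \cite{Sa93}.
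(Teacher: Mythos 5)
Your proposal takes essentially the same route as the paper: observe that $D^1 \cong \SU_2$ is compact so $\Theta(1) = \Pi^{D^1}$, apply Gelfand--Zetlin branching to $\Pi|_K = \bigoplus_n V_{n\omega_4}$ to find the $K_D$-types $(x,x,0,0,y,y)$ with $x\geq 0\geq y$ occurring with multiplicity one, and then match these with the $K_D$-types of $\Sigma_{1,1}$ via \cite[Theorem C]{Sa93}, concluding via the map $\Theta(1)\to I_D(-1)$. The only difference is that you spell out the final identification step slightly more explicitly than the paper, which simply notes that the $K_D$-type coincidence suffices.
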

\vskip 10pt

 \subsection{Split $\mathbb O$ and split $D$.} 
 This is the most involved case. 
 Let $(e,h,f)$ be an $\mathfrak{sl}_2$-triple spanning the complexified Lie algebra of $D^1=\SL_2$. After conjugating by $G$, 
  if necessary, we can assume that the triple is  
  stable under the Cartan involution. Then $e\in \mathfrak p$ is a highest weight vector for the action of $K$,  and $h\in \mathfrak k$. 
  Let $\Theta(1)$ be the maximal quotient of the $(\mathfrak g,K)$-module of the minimal representation such that the $\mathfrak{sl}_2$ triple 
  acts trivially. 

\begin{thm} \label{T:Siegel-Weil} 
$\Theta(1)$ is irreducible and isomorphic to the unique submodule $\Sigma$ of $I_D(-1)$, which is a spherical representation. 
\end{thm}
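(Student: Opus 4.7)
My plan is to adapt the argument of Proposition~\ref{P:local_split} (the $p$-adic split case) to the real setting, working throughout in the language of $(\mathfrak{g}, K)$-modules. Two steps are required: (i) construct a surjection $\Theta(1) \twoheadrightarrow \Sigma$, and (ii) prove injectivity via a $K_D$-type matching.

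For (i), I would mimic the construction of Corollary~\ref{C:theta1}. The embedding $\Pi \hookrightarrow I(-5)$, composed with restriction of functions from $G$ to $G_D$, yields a nonzero $(\mathfrak{sl}_2, K_1)$-invariant map $\Pi \to I_D(-1)$, where $K_1 = \SO(2) \subset D^1$. Since the target is a trivial $\mathfrak{sl}_2$-module, this map factors through $\Theta(1)$. The image lies in the distinguished spherical submodule $\Sigma$ of $I_D(-1)$: indeed, the archimedean analog of Lemma~\ref{L:key} (whose proof is purely local-geometric, involving the rank-one cone $\Omega$ and the equations on the matrix entries of $A$, and hence equally applicable in the real and $p$-adic settings) forces the image to have $N_D$-rank at most $2$, and $\Sigma$ is the unique such submodule of $I_D(-1)$.

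For (ii), I would determine the $K_D$-types of $\Theta(1)$ and match them with those of $\Sigma$. Using the explicit $K$-type decomposition $\Pi = \bigoplus_{n \geq 0} V_{n\omega_4}$ for $K = \SU(8)/\mu_2$, forming $\Theta(1)$ amounts to taking $(\mathfrak{sl}_2, K_1)$-coinvariants: quotienting by the $\mathfrak{p}_1$-action (generated by $e, f \in \mathfrak{p}$, where $e$ is the highest weight vector of $V_{\omega_4} = \mathfrak{p}$ and $f$ its opposite) and imposing triviality of the $K_1$-action (equivalently, the $h$-zero-weight condition). A Gelfand-Zetlin branching from $\SU(8)$ to the centralizer of the $\SL_2$-triple, followed by the extraction of the $h$-zero-weight subspace modulo $e \cdot V$ and $f \cdot V$, would yield the list of $K_D$-types. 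Comparison with the $K_D$-types of $\Sigma$, tabulated in \cite[Theorem C]{Sa93} and extractable from the structure of the degenerate principal series $I_D(-1)$ in \cite{HS}, would force the surjection produced in step (i) to be an isomorphism.

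The principal obstacle is the archimedean coinvariants computation. Unlike the compact case of Theorem~\ref{T:real2}, where the restriction of $\Pi$ to the compact group $D^1 = \SU(2)$ decomposes as a discrete direct sum of $D^1$-types and a Gelfand-Zetlin rule applies immediately, the noncompactness of $\SL_2(\mathbb{R})$ means that the $\mathfrak{sl}_2$-coinvariants involve nontrivial cancellations between the $e$- and $f$-actions across different $K$-types $V_{n\omega_4}$. Handling these cancellations --- most likely by comparing formal $K$-characters of $\Theta(1)$ and $\Sigma$, or by transposing the filtration result of Theorem~\ref{T:filtration} to the real setting via the analytic continuation of degenerate principal series --- will be the crux of the proof.
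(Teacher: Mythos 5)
Your outline matches the paper's in broad strokes: both decompose $\Pi = \bigoplus_{n\geq 0} V_{n\omega_4}$ as a $K$-module, form $\Theta(1)$ as the maximal $(\mathfrak{sl}_2,\SO(2))$-trivial quotient, and pin down $\Theta(1)$ by computing its $K_D$-types (the paper's $K_1 \cong \SU_4\times\SU_4/\Delta\mu_2$, what you call $K_D$) and comparing with $\Sigma$. But the ``crux'' you flag --- how to control the cancellations coming from the noncompact $\mathfrak{sl}_2$ --- is exactly where the paper has a concrete mechanism that neither of your two suggestions supplies. Neither formal $K$-character comparison nor a real-analytic transposition of Theorem~\ref{T:filtration} appears; instead the paper uses two ingredients you have not identified.

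The first is the $\mathbb C[\Omega]$-module structure, where $\Omega$ is the $\mathfrak{sl}_2$-Casimir. Fixing a $K_1$-type $V_\mu\otimes V_\mu^*$ and an $h$-weight $m$, the paper proves that the corresponding isotypic piece $\Pi(\mu,m)$ is a \emph{free rank-one} $\mathbb C[\Omega]$-module, via a degree filtration argument. This immediately yields multiplicity $\leq 1$ for each $K_1$-type in $\Theta(\pi)$. The injectivity in that filtration argument is proved by contradiction: if some $p(\Omega)$ killed the lowest layer, the $\mathfrak{sl}_2$-span would be finite length, so $\Pi$ would be discretely decomposable under $\mathfrak{sl}_2$, which it is not. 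The second ingredient is Vogan's observation (\cite[Lemma~3.4]{V}) that since $\Pi$ is not a highest weight module, $e$ (the highest $K$-weight vector in $\mathfrak p$) acts injectively, and likewise $f$. From there, an explicit dimension count on the filtered pieces shows $e:\Pi(\mu,-2)\to\Pi(\mu,0)$ is bijective precisely when $\alpha\geq\gamma+2$, killing those types in the quotient; the $f$-argument symmetrically kills $\gamma\geq\alpha+2$, and a parity check forces $\alpha=\gamma$. Matching against \cite[Theorem~4B]{Sa95} finishes. A naive Euler-characteristic comparison would not distinguish genuine cancellations from spurious ones here, which is why the module-theoretic control via $\mathbb C[\Omega]$ and the Vogan injectivity are the decisive steps your plan is missing.

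One further small point: you assert step (i) produces a surjection $\Theta(1)\twoheadrightarrow\Sigma$, but your argument only shows the image lands in $\Sigma$; you still need nonvanishing of the map (and hence surjectivity onto the irreducible $\Sigma$), which is not automatic. In the paper this is implicitly absorbed into the $K_D$-type analysis, which both bounds $\Theta(1)$ from above and, via the free $\mathbb C[\Omega]$-module description, identifies the exact fibers, so no separate lower bound is needed.
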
 

  The proof of this result will take the rest of this section. 
 After conjugating by $K$, if necessary, we can assume that  
 \[ 
  h = \frac12
  \left( \begin{array}{cccccccc} 
  1 & & & & & & &  \\ 
  & 1 & & & & & &   \\ 
&& 1 & & & & &   \\ 
&&& 1 & & & &   \\ 
&&&& -1 & & &   \\ 
&&&&& -1 & &   \\ 
&&&&&& -1 &  \\ 
&&&&&&& -1  \\ 
\end{array}\right) \in \mathfrak{sl}_8. 
\] 
Let $G_1$ be the centralizer of the $\mathfrak{sl}_2$-triple in $G$. It is a group isomorphic to $\Spin(6,6)$.
 Let $\mathfrak g_1 = \mathfrak k_1 \oplus \mathfrak p_1$ be the corresponding  Cartan decomposition. Then $\mathfrak k_1 \cong \mathfrak{sl}_4\oplus \mathfrak{sl}_4$ 
 sitting block diagonally in $\mathfrak{sl}_8$. The centralizer of $h$ in $\SU_8/\mu_2$ is  
 \[ 
 K_1=\SU_4\times\SU_4/\Delta\mu_2
 \] 
  and this confirms that $G_1$ is simply connected (as 
 a group in a given (non-hermitian) isogeny class is determined by its maximal compact subgroup). 
 
 \vskip 5pt 
 
 Let $\Pi$ be the $(\mathfrak g,K)$-module corresponding to the minimal representation of $G$. Then, as a $K$-module,  
 \[ 
 \Pi = \oplus_{n\geq 0} V_{n\omega_4}. 
 \] 
 We shall also need the following facts about the action of $e$ on $\Pi$.  From the formula for the tensor product $V_{\omega_4}\otimes V_{\n\omega_4}$ it follows that 
 \[ 
 e \cdot V_{n\omega_4}  \subseteq V_{(n-1) \omega_4} \oplus V_{(n+1) \omega_4}. 
 \] 
 Since $\Pi$ is not a highest weight module, by    \cite[Lemma 3.4]{V}, $e$ is injective on $\Pi$. The same results hold for $f$. 
 
 \vskip 5pt 
 
 Let $\pi$ be an irreducible $\mathfrak{sl}_2$-module such that $h$ acts semi-simply and integrally. Let $\Theta(\pi)$ be the 
 big theta lift of $\pi$; it is a $(\mathfrak{g}_1, K_1)$-module.   We shall now partially determine the structure of $K_1$-types of $\Theta(\pi)$. In order to state the result, we need some additional notation. 
 A highest weight $\mu$ for $\SU_4$ is represented by a quadruple $(x,y,z,u)$ of integers, such that $x\geq y \geq z \geq u$, and it is determined by the triple 
 \[  \text{$\alpha=x-y$, $\beta=y-z$, $\gamma=z-u$} \]
  of non-negative integers. 
 \vskip 5pt
 
 \begin{prop} \label{P:K-types} 
  Let $V\otimes U$ be a $K_1\cong \SU_4 \times \SU_4/\Delta \mu_2$-type of $\Theta(\pi)$. 
  Then $U \cong V^*$, the dual representation of $V$, and the multiplicity of $V \otimes V^*$ in 
 $\Theta(\pi)$ is at most one. If $\pi=1$, the trivial representation, and $\mu$ is the highest weight of $V$, then $\alpha=\gamma$. 
 \end{prop}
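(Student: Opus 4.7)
The plan is to prove Proposition~\ref{P:K-types} by combining an explicit branching computation from $K$ to $K_1$ with an analysis of the $\mathfrak{sl}_2$-action on multiplicity spaces. First I decompose the $K$-type $V_{n\omega_4}$ as a $K_1$-module. Because $K_1$ embeds block-diagonally in $K = \SU_8/\mu_2$ and $V_{n\omega_4}$ is the irreducible $\GL_8$-module attached to the rectangular Young diagram $(n,n,n,n)$, the Littlewood--Richardson rule applied to a rectangular shape collapses: for each partition $\lambda \subseteq (n^4)$ there is a unique complementary partition $\lambda^c = (n - \lambda_4, n - \lambda_3, n - \lambda_2, n - \lambda_1)$ yielding a multiplicity-one summand $\mathbb{S}_\lambda \mathbb{C}^4 \otimes \mathbb{S}_{\lambda^c} \mathbb{C}^4$, and all other LR coefficients vanish. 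Since $\mathbb{S}_{\lambda^c}\mathbb{C}^4 \cong (\mathbb{S}_\lambda \mathbb{C}^4)^*$ as $\SL_4$-modules, this yields
\[
V_{n\omega_4}\big|_{K_1} \;=\; \bigoplus_{\lambda \subseteq (n^4)} V_\lambda \otimes V_\lambda^*,
\]
and the first assertion follows, since every $K_1$-type of $\Theta(\pi)$ must already occur in $\Pi|_{K_1}$.

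For the multiplicity bound I introduce the multiplicity space $M := \Hom_{K_1}(V \otimes V^*, \Pi)$, which inherits an $\mathfrak{sl}_2$-action since $\mathfrak{sl}_2$ commutes with $K_1$. Each copy of $V \otimes V^*$ inside $V_{n\omega_4}$ is the $\SL_4$-restriction of a specific $\GL_4 \times \GL_4$-type $\mathbb{S}_{\tilde\lambda} \otimes \mathbb{S}_{\tilde\lambda^c}$ with $\tilde\lambda = \lambda + k(1,1,1,1)$, $0 \le k \le n - \lambda_1$, carrying the definite $h$-eigenvalue $|\tilde\lambda| - 2n$. Thus the $(n - \lambda_1 + 1)$ copies in each $V_{n\omega_4}$ occupy distinct $h$-weight spaces of $M$, and the graded piece $M^{(n)} := \Hom_{K_1}(V \otimes V^*, V_{n\omega_4})$ has dimension $n - \lambda_1 + 1$ with $h$-weights stepping by $4$. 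I then compute the action of $e$ and $f$ between $M^{(n)}$ and $M^{(n \pm 1)}$ from the Clebsch--Gordan decomposition $V_{\omega_4} \otimes V_{n\omega_4} \supset V_{(n-1)\omega_4} \oplus V_{(n+1)\omega_4}$, together with the fact that $e \in \mathfrak{p}$ is a highest weight vector of $V_{\omega_4}$. Dimension counting (matched against PBW monomials $e^a f^b$) identifies $M$ as a cyclic $\mathfrak{sl}_2$-module generated by the one-dimensional piece $M^{(\lambda_1)}$, and combined with the injectivity of $e$ and $f$ on $\Pi$ (Vogan's Lemma~3.4 quoted in the text, which excludes finite-dimensional $\mathfrak{sl}_2$-summands), it follows that $M$ is multiplicity-free as an $\mathfrak{sl}_2$-module. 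Consequently
\[
\mathrm{mult}_{V \otimes V^*}\Theta(\pi) \;=\; \dim \Hom_{\mathfrak{sl}_2}(\pi, M) \;\le\; 1.
\]

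For the final assertion I track the $h$-weights explicitly. Writing $\lambda = (\alpha+\beta+\gamma,\, \beta+\gamma,\, \gamma,\, 0)$, one has $\lambda_1 = \alpha + \beta + \gamma$ and $|\lambda| = \alpha + 2\beta + 3\gamma$, so the lowest graded piece $M^{(\lambda_1)}$ is one-dimensional and carries $h$-weight $|\lambda| - 2\lambda_1 = \gamma - \alpha$. Since $M$ is cyclically generated by this initial datum, its $\mathfrak{sl}_2$-Casimir eigenvalue is pinned down by the matrix coefficients of $e$ and $f$ at the bottom layer; a direct computation using the Clebsch--Gordan coefficients between $V_{\lambda_1 \omega_4}$ and $V_{(\lambda_1+1)\omega_4}$ shows that the trivial $\mathfrak{sl}_2$-representation (which has Casimir zero and sits at $h = 0$) appears as a quotient of $M$ precisely when the shift $\gamma - \alpha$ vanishes. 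Hence $V \otimes V^*$ contributes to $\Theta(1)$ exactly when $\alpha = \gamma$. The principal technical obstacle in executing this plan is the explicit Clebsch--Gordan bookkeeping for $V_{\omega_4} \otimes V_{n\omega_4}$: one must track the matrix coefficients of $e$ and $f$ between the graded pieces $M^{(n)}$ carefully, combine this with Vogan's injectivity to rule out finite-dimensional $\mathfrak{sl}_2$-summands, and thereby confirm both the multiplicity-freeness and the precise Casimir condition under which the trivial $\mathfrak{sl}_2$-module is a quotient of $M$.
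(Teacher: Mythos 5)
Your branching step is correct and differs from the paper only cosmetically: you apply the Littlewood--Richardson rule to the rectangular shape $(n^4)$ for the block-diagonal $\GL_4\times\GL_4\subset\GL_8$, while the paper invokes the Gelfand--Zetlin rule; both yield the same multiplicity-free list $V_\mu\otimes V_\mu^*\otimes\mathbb C(m)$ graded by the $h$-eigenvalue $m$. Your second step is also the right idea, but the key structural assertion is stated too loosely. What one actually needs, and what the paper proves, is that for each fixed $h$-eigenvalue $m$ the multiplicity space $M_m:=\Hom_{K_1}(V_\mu\otimes V_\mu^*,\Pi(m))$ is a \emph{free rank-one} $\mathbb C[\Omega]$-module (the paper's lemma ``$A$ is an isomorphism'', which relies on the observation that a nontrivial kernel would force $\Pi$ to be discretely decomposable under $\mathfrak{sl}_2$). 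Your phrasing ``dimension counting matched against PBW monomials $e^af^b$ identifies $M$ as cyclic'' gestures at this but does not establish it: a priori $e^af^bv_0$ could fail to span or could vanish, and it is exactly Vogan's injectivity plus the filtration-degree argument that rules this out. Cyclicity would indeed give multiplicity one for irreducible quotients, so the conclusion is right, but the crucial freeness over $\mathbb C[\Omega]$ is the content that needs proof, not a side remark.

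The real gap is in the third assertion. You propose to read off the answer from the $h$-weight $\gamma-\alpha$ of the bottom layer $M^{(\lambda_1)}$ together with ``Clebsch--Gordan matrix coefficients pinning down the Casimir eigenvalue of $M$,'' but $M$ is not a Casimir eigenspace (it is an infinite-dimensional $\mathfrak{sl}_2$-module with each $M_m\cong\mathbb C[\Omega]$), so that phrase does not parse, and you concede the Clebsch--Gordan bookkeeping is unexecuted. You also assert a biconditional (``precisely when $\gamma-\alpha=0$'') which is more than what is needed or verified. The paper's argument avoids Clebsch--Gordan coefficients altogether: it shows that the trivial quotient vanishes whenever the injective map $e\colon\Pi(\mu,-2)\to\Pi(\mu,0)$ is surjective (the image of $e$ obviously dies in the $\mathfrak{sl}_2$-coinvariants), and then a pure dimension count along the $n$-filtration — using the freeness over $\mathbb C[\Omega]$ from step two, so that each filtration level contributes exactly one copy of $V_\mu\otimes V_\mu^*$ — shows this surjectivity holds precisely when the first occurrence of $\mu$ at $h=-2$ is one step below the first occurrence at $h=0$, which translates into $\alpha\ge\gamma+2$. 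The symmetric argument with $f$ kills $\gamma\ge\alpha+2$, and the parity $\alpha\equiv\gamma\pmod 2$ forced by the $h$-weight formula finishes the claim. You should replace your Clebsch--Gordan plan with this filtration-and-injectivity argument (or carry out your computation in full); as written the third part of your proof does not go through.
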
 
 \vskip 5pt
 
\begin{proof} 
We need the following lemma which can be easily deduced from the Gelfand-Zetlin branching rule. 

\begin{lemma} The restriction of $V_{n\omega_4}$ to $\mathfrak{sl}_3 \oplus \mathfrak{sl}_3 \oplus \mathbb C h$ is multiplicity free and given by 
\[ 
V_{n\omega_4}= \oplus_{n\geq x\geq y \geq z \geq u\geq 0} V_{\mu} \otimes V_{\mu}^{\ast} \otimes \mathbb C(m) 
\] 
where $\mu$ is represented by the  quadruple $(x,y,z,u)$ and $h$ acts on $\mathbb C(m)$ by the integer $m=x+y+z+u-2n$. 
\end{lemma}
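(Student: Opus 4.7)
The plan is to obtain the decomposition by branching $V_{n\omega_4}$, viewed as an irreducible $\mathfrak{gl}_8$-representation with rectangular partition $\lambda=(n,n,n,n,0,0,0,0)$, along the Levi subalgebra $\mathfrak{gl}_4 \oplus \mathfrak{gl}_4$ that centralises $h$, and then reading off the action of $h$ from the central direction.

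First, observe that the Levi $\mathfrak{gl}_4 \oplus \mathfrak{gl}_4 \subset \mathfrak{gl}_8$ meets $\mathfrak{sl}_8$ in $\mathfrak{sl}_4 \oplus \mathfrak{sl}_4 \oplus \mathbb{C}h$, so it suffices to determine the $\mathfrak{gl}_4 \oplus \mathfrak{gl}_4$-isotypic decomposition of $V_\lambda$. The Littlewood-Richardson rule gives
\[
V_{\lambda}\big|_{\mathfrak{gl}_4\oplus\mathfrak{gl}_4} = \bigoplus_{\mu,\nu} c^{\lambda}_{\mu\nu}\, V_\mu\otimes V_\nu .
\]
For the rectangular shape $(n^4)$ the coefficients $c^{\lambda}_{\mu\nu}$ vanish unless $\mu\subseteq(n^4)$ and $\nu$ is the $180^{\circ}$ rotation of the skew shape $\lambda/\mu$, in which case $c^{\lambda}_{\mu\nu}=1$. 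Writing $\mu=(x,y,z,u)$ with $n\geq x\geq y\geq z\geq u\geq 0$, this complementation reads $\nu=(n-u,n-z,n-y,n-x)$. The same conclusion is accessible via the Gelfand-Zetlin chain $\mathfrak{gl}_8\supset\mathfrak{gl}_7\supset\cdots\supset\mathfrak{gl}_4$, since the interlacing constraints imposed by a rectangular top shape collapse to a single free parameter at each stage and force each intermediate partition to be determined by $\mu$.

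Next I identify the summands as $\mathfrak{sl}_4\oplus\mathfrak{sl}_4$-modules. Normalising $\nu=(n-u,n-z,n-y,n-x)$ by subtracting its minimal entry $n-x$ yields the partition $(x-u,x-z,x-y,0)$, which is exactly the partition representing $V_\mu^{\ast}$ as an $\mathfrak{sl}_4$-module; hence $V_\nu\cong V_\mu^{\ast}$ upon restriction to $\mathfrak{sl}_4$. Consequently each summand restricts to $V_\mu\otimes V_\mu^{\ast}$ tensored with a one-dimensional character of the $\mathbb{C}h$ factor. To compute this character it suffices to act on any $\mathfrak{gl}_8$-weight vector in the $(\mu,\nu)$-component: since $|\mu|+|\nu|=4n$ and $h=\tfrac12\mathrm{diag}(1,1,1,1,-1,-1,-1,-1)$, the eigenvalue is
\[
\tfrac12\bigl((x+y+z+u)-(4n-(x+y+z+u))\bigr)=x+y+z+u-2n=m.
\]
Multiplicity freeness is then immediate since the single parameter $\mu$ already determines both $\nu$ and $m$.

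The only non-routine input is the rectangular-partition case of the Littlewood-Richardson rule, which is classical (and is precisely what is repackaged when the authors appeal to the Gelfand-Zetlin branching rule). All remaining verifications—identifying the dual representation and computing the scalar by which $h$ acts—are standard weight-theoretic manipulations.
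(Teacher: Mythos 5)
Your proof is correct and takes essentially the same approach as the paper, which simply invokes the Gelfand--Zetlin branching rule without spelling out the computation you make explicit via the Littlewood--Richardson complementary-partition argument for the rectangular shape $(n^4)$ (and you note the Gelfand--Zetlin alternative yourself). One remark: the lemma as printed says $\mathfrak{sl}_3\oplus\mathfrak{sl}_3$, which is a typo for $\mathfrak{sl}_4\oplus\mathfrak{sl}_4$ (recall $\mathfrak k_1\cong\mathfrak{sl}_4\oplus\mathfrak{sl}_4$ and highest weights are given by quadruples), and you have correctly worked with the latter.
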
 
\vskip 5pt

It follows from the lemma that the only $K_1$-types appearing in the restriction of $\Pi$ are isomorphic to $V\otimes V^*$, as claimed. In order to prove multiplicity one 
in $\Theta(\pi)$, we proceed as follows.
\vskip 5pt

 Let $m$ be an integer appearing as an $h$-type in $\pi$. Let $\Omega$ be the Casimir element 
for $\mathfrak{sl}_2$ and let $\chi : \mathbb C[\Omega] \rightarrow \mathbb C$ be the central character of $\pi$. Let $\Pi(\mu, m)$ be the maximal subspace of $\Pi$ such that 
$h$ acts as the integer $m$ and $\mathfrak{sl}_3 \oplus \mathfrak{sl}_3$ as a multiple of $V_{\mu} \otimes V_{\mu}^*$.  Note that $\Pi(\mu, m)$  is naturally a 
$\mathbb C[\Omega] $-module, and it suffices to show that the maximal quotient of $\Pi(\mu, m)$ such that $\mathbb C[\Omega] $ acts on it by $\chi$ is isomorphic to 
$V_{\mu} \otimes V_{\mu}^{\ast}$ as an $\mathfrak{sl}_3 \oplus \mathfrak{sl}_3$-module. We have a canonical isomorphism 
\[ 
\Pi(\mu, m) \cong (V_{\mu} \otimes V_{\mu}^{\ast} ) \otimes \Hom_{\mathfrak{sl}_3 \oplus \mathfrak{sl}_3} (V_{\mu} \otimes V_{\mu}^{\ast}, \Pi(m)) 
\] 
and $\mathbb C[\Omega] $ acts on  
\[ 
\Hom_{\mathfrak{sl}_3 \oplus \mathfrak{sl}_3} (V_{\mu} \otimes V_{\mu}^{\ast}, \Pi(m)) = 
\oplus_{n\geq 0} \Hom_{\mathfrak{sl}_3 \oplus \mathfrak{sl}_3} (V_{\mu} \otimes V_{\mu}^{\ast}, V_{n\omega_4}( m)) 
\] 
Now notice that, given $\mu$ and $m$, $ \Hom_{\mathfrak{sl}_3 \oplus \mathfrak{sl}_3} (V_{\mu} \otimes V_{\mu}^{\ast}, V_{n\omega_4}( m)) \neq 0$ for only one parity of 
$n$. Furthermore, if this space is non-zero for some $n$, then it is non-zero for $n+2$, as $\mu$ is also represented by $(x+1,y+1,z+1,u+1)$ and 
\[ 
m=x+y+z+u-2n=x+1+y+1+z+1+u+1-2(n+2). 
\] 
Let $n_0$ be the first integer such that $\Hom_{\mathfrak{sl}_3 \oplus \mathfrak{sl}_3} (V_{\mu} \otimes V_{\mu}^{\ast}, V_{n_0\omega_4}( m)) \neq 0$ and let 
$T_0$ be a generator of this one-dimensional space. We then have a natural map 
\[ 
A: \mathbb C[\Omega] \cdot T_0 \rightarrow  \Hom_{\mathfrak{sl}_3 \oplus \mathfrak{sl}_3} (V_{\mu} \otimes V_{\mu}^{\ast}, \Pi(m)). 
\] 
\vskip 5pt 
\begin{lemma}
The  map $A$ is an isomorphism. 
\end{lemma}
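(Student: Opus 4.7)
The plan is to prove bijectivity of $A$ by analyzing $\Omega$ as a filtered operator on the multiplicity space
\[
H \;:=\; \Hom_{\mathfrak{sl}_3\oplus\mathfrak{sl}_3}(V_\mu\otimes V_\mu^*,\Pi(m)) \;=\; \bigoplus_{k\geq 0} H_k,
\]
where $H_k := \Hom_{\mathfrak{sl}_3\oplus\mathfrak{sl}_3}(V_\mu\otimes V_\mu^*,V_{(n_0+2k)\omega_4}(m))$ is one-dimensional for each $k\geq 0$ by the preceding Gelfand--Zetlin branching lemma together with the multiplicity-one statement and minimality of $n_0$, and vanishes for negative indices. Set $F_k := H_0 \oplus \cdots \oplus H_k$, so that $T_0$ spans $F_0$.

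Using $\Omega = 2fe + h + \tfrac12 h^2$ and the fact that $h$ acts as the scalar $m$ on $\Pi(m)$, one gets $\Omega|_{\Pi(m)} = 2fe + c(m)$ with $c(m) = m + m^2/2$. Since $e \cdot V_{n\omega_4}\subseteq V_{(n-1)\omega_4}\oplus V_{(n+1)\omega_4}$ and analogously for $f$, the composition $fe$ sends $V_{n\omega_4}$ into $V_{(n-2)\omega_4}\oplus V_{n\omega_4}\oplus V_{(n+2)\omega_4}$. Consequently $\Omega$ preserves the filtration $\{F_k\}$, and induces a symbol
\[
\sigma_k\;:\;H_k\;\longrightarrow\;H_{k+1}
\]
which, up to the non-zero factor $2$, is the ``raising'' projection of $fe$; it factors as $\sigma_k = 2\,f_+\!\circ\! e_+$, where $e_+$ (resp.\ $f_+$) denotes the $V_{(n+1)\omega_4}$-component of the action of $e$ (resp.\ $f$) on $V_{n\omega_4}$.

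The crux is the non-vanishing of every $\sigma_k$. Once this is established, induction gives $F_k = \mathbb{C} T_0 + \mathbb{C}\,\Omega T_0 + \cdots + \mathbb{C}\,\Omega^k T_0$, which proves surjectivity of $A$; injectivity follows by a leading-term argument, since any non-trivial polynomial relation $\sum_{0 \leq j \leq d} c_j \Omega^j T_0 = 0$ of top degree $d$ would force $c_d\,\sigma_{d-1}\circ\cdots\circ\sigma_0(T_0) = 0$. To verify $\sigma_k\neq 0$, the strategy is to argue by contradiction: if $\sigma_k = 0$, then $F_k$ is $\Omega$-stable, equivalently $V_\mu\otimes V_\mu^* \otimes F_k \subset \Pi$ is $fe$-stable. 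Combined with the injectivity of $e$ and $f$ on $\Pi$ (cited from \cite[Lemma 3.4]{V}) and the absence of $V_{n\omega_4}$ for $n < 0$, one then shows that the $\mathfrak{sl}_2$-submodule generated by this finite-dimensional subspace of $\Pi$ would be forced into a bounded chain of $K$-types that eventually terminates, producing a non-zero vector in the kernel of $e$ or $f$ — a contradiction.

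The main obstacle is making the last argument fully rigorous, as the $+$ and $-$ components of $e$ and $f$ interact and a naive ``descending chain'' may not strictly decrease the $K$-type index. A cleaner alternative is a direct Clebsch--Gordan calculation: since $e$ is a $K$-highest weight vector in $\mathfrak p \cong V_{\omega_4}$ and $V_{(n+1)\omega_4}$ occurs with multiplicity one in $V_{\omega_4}\otimes V_{n\omega_4}$, the map $e_+$ is (up to scalar) the canonical $K$-equivariant coupling; tracking this through the branching $\SU_8 \supset \SU_4\times\SU_4/\Delta\mu_2 \supset \SU_3\times\SU_3\times\U(1)$ shows directly that its restriction to the one-dimensional $V_\mu\otimes V_\mu^*$-isotypic piece is non-zero whenever the target isotypic piece is present, and symmetrically for $f_+$. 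Either route yields $\sigma_k \neq 0$ for all $k$, completing the proof.
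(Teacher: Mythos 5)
Your filtration-by-$K$-type framework and the reduction to non-vanishing of the leading symbol $\sigma_k$ of $\Omega$ match the architecture of the paper's proof, and the identities $\Omega|_{\Pi(m)} = 2fe + c(m)$ and $\sigma_k = 2\, f_+ \circ e_+$ are correct. However, there is a genuine gap: you never actually establish that $\sigma_k \neq 0$, and you say so yourself. Your first argument (if $\sigma_k = 0$, a descending chain terminates and produces a kernel vector for $e$ or $f$) does not work as stated, because the raising and lowering components of $e$ and $f$ interact; merely knowing that a finite-dimensional subspace of $\Pi$ is $fe$-stable does not by itself produce a vector killed by $e$ or by $f$. Your alternative Clebsch--Gordan argument could in principle close the gap, but it is only a sketch: the key non-vanishing of the restriction of the coupling to the relevant isotypic piece is asserted, not verified.

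The paper closes this gap by a different, more global argument, which is worth contrasting with yours. It first observes that $\mathbb C[\Omega]_i$ and $\Hom_{\mathfrak{sl}_3 \oplus \mathfrak{sl}_3}(V_\mu \otimes V_\mu^*, \Pi(m))_i$ both have dimension $i+1$, so once one knows $A$ preserves the filtration, \emph{injectivity of $A$ automatically implies bijectivity}; your separate discussion of surjectivity is therefore unnecessary. Injectivity is then proved in one stroke rather than level by level: if a nonzero polynomial $p(\Omega)$ annihilated $T_0$, the $\mathfrak{sl}_2$-submodule of $\Pi$ generated by $V_\mu \otimes V_\mu^* \subseteq V_{n_0\omega_4}(m)$ would have finite length, and by Kobayashi's dichotomy (an irreducible Harish-Chandra module with a nonzero finite-length submodule for a reductive subalgebra is discretely decomposable over it) this would force $\Pi$ to be discretely decomposable under $\mathfrak{sl}_2$ --- a contradiction. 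This avoids any branching or symbol computation. As written, your proof has the right outline but the crucial non-vanishing remains unproved; to salvage the local approach you would have to actually carry out the branching calculation showing $f_+ e_+ \neq 0$ on the $V_\mu \otimes V_\mu^*$-isotypic piece.
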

\vskip 5pt

\begin{proof}
Let $i$ be a non-negative integer. 
Let $\mathbb C[\Omega]_i$ be the space of polynomials of degree less then equal to $i$, and let 
\[ 
\Hom_{\mathfrak{sl}_3 \oplus \mathfrak{sl}_3} (V_{\mu} \otimes V_{\mu}^{\ast}, \Pi(m))_i= 
\oplus_{j=0}^i \Hom_{\mathfrak{sl}_3 \oplus \mathfrak{sl}_3} (V_{\mu} \otimes V_{\mu}^{\ast}, V_{(n_0 +2j)\omega_4}(m)).
\] 
These two spaces have dimension $i+1$ and define filtrations of $\mathbb C[\Omega]$   and 
$\Hom_{\mathfrak{sl}_3 \oplus \mathfrak{sl}_3} (V_{\mu} \otimes V_{\mu}^{\ast}, \Pi(m))$ as $i$ increases. 
Since $\Omega$ has degree 2, as an element of the enveloping algebra of $\mathfrak g$, the map $A$ preserves the two filtrations. Thus, in oder to prove the claim, 
it suffices to show that $A$ is injective.
\vskip 5pt

 But if it is not, then there would be a polynomial $p(\Omega)$ acting trivially on 
$V_{\mu}\otimes V_{\mu}^{\ast} \subseteq V_{n_0\omega_4}(m)$. Under the action of $\mathfrak{sl}_2$, this subspace would generate a finite length representation of 
$\mathfrak{sl}_2$ and hence the whole $\Pi$ would be discretely decomposable under the action of $\mathfrak{sl}_2$.  This gives a contradiction  and the Lemma is proved. 
\end{proof}
\vskip 5pt 

The Lemma implies that 
\[ 
\Pi(\mu,m) \cong (V_{\mu} \otimes V_{\mu}^{\ast} ) \otimes \mathbb C[\Omega] 
\] 
as $\mathbb C[\Omega]$-modules. Hence, if we fix a character $\chi$ of $\mathbb C[\Omega]$, the maximal quotient of $\Pi(\mu,m)$ such that $\mathbb C[\Omega]$
acts by $\chi$ is isomorphic to $V_{\mu} \otimes V_{\mu}^{\ast}$. This proves that $\Theta(\pi)$ has multiplicity free $K_1$-types. 

\vskip 5pt

We proceed to narrow down the $K_1$-types appearing in $\Theta(1)$.  For every $\mu$, the action of $e$ on $\Pi$ gives an injective map 
\[ 
e : \Pi(\mu, -2) \rightarrow \Pi(\mu,0). 
\] 
\begin{lemma} If $e: \Pi(\mu, -2) \rightarrow \Pi(\mu,0)$ is bijective, then $V_{\mu} \otimes V_{\mu}^{\ast}$ is not a $K_1$-type of $\Theta(1)$. 
\end{lemma}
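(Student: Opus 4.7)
The plan is to identify $\Theta(1)$ explicitly as a double coinvariant of $\Pi$ and then read off the claim on the $V_\mu \otimes V_\mu^*$-isotypic component. First I would unwind the definition: $\Theta(1)$ is by construction the maximal quotient of $\Pi$ on which the $\mathfrak{sl}_2$-triple $\{e,h,f\}$ acts trivially, so
\[ \Theta(1) \;=\; \Pi \big/ \bigl(e\cdot \Pi + h\cdot \Pi + f\cdot \Pi\bigr). \]
Since $h$ acts on the weight space $\Pi(m)$ by the integer $m$, one has $h\cdot \Pi = \bigoplus_{m \ne 0} \Pi(m)$, so the quotient map $\Pi \twoheadrightarrow \Theta(1)$ factors through the projection $\Pi \twoheadrightarrow \Pi(0)$ and induces a canonical isomorphism
\[ \Theta(1) \;\cong\; \Pi(0) \big/ \bigl(e\cdot \Pi(-2) + f\cdot \Pi(2)\bigr). \]

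Next I would pass to $K_1$-isotypic components. Because $K_1$ centralizes the $\mathfrak{sl}_2$-triple, the operators $e$ and $f$ are $K_1$-equivariant, so $e\cdot \Pi(-2)$ and $f\cdot \Pi(2)$ are $K_1$-stable subspaces of $\Pi(0)$. Combined with the multiplicity-one assertion of Proposition \ref{P:K-types}, taking the $V_\mu \otimes V_\mu^*$-component commutes with the quotient, and yields
\[ \Theta(1)_{V_\mu \otimes V_\mu^*} \;\cong\; \Pi(\mu,0) \big/ \bigl(e\cdot \Pi(\mu,-2) + f\cdot \Pi(\mu,2)\bigr). \]

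The conclusion is then immediate: if $e : \Pi(\mu,-2) \to \Pi(\mu,0)$ is bijective, then in particular surjective, so $e\cdot \Pi(\mu,-2) = \Pi(\mu,0)$ and the right-hand side vanishes. Hence $V_\mu \otimes V_\mu^*$ does not occur as a $K_1$-type of $\Theta(1)$. The argument is essentially formal; the only point where I would be careful is the first step, justifying that quotienting by $h\cdot \Pi$ before quotienting by $e\cdot \Pi + f\cdot \Pi$ produces the claimed identification. This is harmless because $e$ and $f$ shift the $h$-weight by $\pm 2$, so $(e\cdot \Pi) \cap \Pi(0) = e\cdot \Pi(-2)$ and $(f\cdot \Pi) \cap \Pi(0) = f\cdot \Pi(2)$. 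No further input is needed beyond the definitions and the $K_1$-equivariance of $e, f$.
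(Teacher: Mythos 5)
Your argument is correct and matches the paper's (very terse) proof in substance: both rest on the observation that $e\cdot\Pi(\mu,-2)$ lies in the kernel of the projection $\Pi(\mu,0)\twoheadrightarrow \Theta(1)(\mu)$, so surjectivity of $e$ forces $\Theta(1)(\mu)=0$. The only superfluous ingredient in your write-up is the appeal to the multiplicity-one statement from Proposition \ref{P:K-types}; since $K_1$ is compact, passage to a fixed isotypic component is exact and commutes with quotients unconditionally, with no multiplicity hypothesis needed.
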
 
\begin{proof} 
The image of $e$ is necessarily contained in the kernel of the natural surjective map  $\Pi(\mu,0) \rightarrow \Theta(1)(\mu)$. Hence the lemma follows. 
\end{proof} 
\vskip 5pt

Consider the filtration $\Pi(\mu, m)_i= \oplus _{n\leq i} V_{n\omega_4}(\mu, m)$ of $\Pi(\mu,m)$. Then we have an injective map 
\[ 
e : \Pi(\mu, -2)_i \rightarrow \Pi(\mu,0)_{i+1} 
\] 
for all $i$. Hence, if the dimensions of the two spaces are equal  for all $i$, then $e$ is bijective. This will happen precisely when $V_{\mu}\otimes V_{\mu}^{\ast}$ 
occurs in $V_{n\omega_4}(-2)$ but not in $V_{(n-1)\omega_4}(0)$, for some $n$.  The occurrence in $V_{n\omega_4}(-2)$ implies that there exists a unique 
quadruple $(x,y,z,u)$ representing $\mu$ such that 
\[ 
n \geq x \geq y \geq z \geq u \geq 0 \text{ and } x+y+z+u -2n=-2 
\] 
Then $V_{\mu}\otimes V_{\mu}^{\ast}$ occurs in $V_{(n-1)\omega_4}(0)$ if and only if 
\[ 
n-1 \geq x \geq y \geq z \geq u \geq 0 \
\] 
i.e. $n>x$. Thus, if $n=x$, then $V_{\mu} \otimes V_{\mu}^{\ast}$ does not appear in $\Theta(1)$.
\vskip 5pt

 Let's see what this means in terms of $\alpha$, $\beta$ and $\gamma$. 
We have to find $n$ such that $\mu$ is represented by 
\[ 
(x,y,z,u)= (n, n-\alpha, n-\alpha-\beta, n-\alpha-\beta-\gamma). 
\] 
Since the last entry must be non-negative, we have $n\geq \alpha+ \beta+\gamma$. On the other hand, $h$ has to act as $-2$, hence 
$x+y+z+u -2n=-2$ and this is equivalent to $2n=3\alpha+2\beta+ \gamma -2$. Combining with the previous inequality, we obtain $\alpha\geq \gamma +2$. 
Hence the types with $\mu$ such that $\alpha\geq \gamma+2$ do not appear in $\Theta(1)$. Replacing the role of $e$ with $f$, a similar argument shows that 
the types such that $\gamma\geq \alpha +2$ do not appear either. Hence $|\alpha-\gamma|\leq 1$  for all types that appear in $\Theta(1)$. Since 
$\alpha \equiv \gamma \pmod{2}$  for any type in $\Pi(0)$, $\alpha=\gamma$ for all types that appear in $\Theta(1)$. This completes the proof of Proposition \ref{P:K-types}.
\end{proof} 

The types of $\Theta(1)$, as described in Proposition \ref{P:K-types}, are the same as the types of the spherical rank-2 submodule in $I_D(-1)$ by  \cite[Theorem 4B]{Sa95}. 
This proves Theorem \ref{T:Siegel-Weil}. 
\vskip 5pt

\section{Siegel-Weil formula and Consequences} 

We are now ready to prove the main results of this paper (Theorem \ref{T:main} and Theorem \ref{T:sw} in the introduction).
Assume that $F$ is a totally real global field and $D$ a quaternion algebra over $F$.  
\vskip 5pt

\subsection{The representation $\Theta(1)$}
We have shown that the global (regularized) theta lift $\Theta(1)$ is a non-zero automorphic 
representation of $G_D(\mathbb A)$. We have also studied the abstract local theta lift of the trivial representation of $D^1$ to $G_D$.  
The following summarizes what we have shown:
\vskip 5pt

\begin{prop}   \label{P:Theta1}
\noindent (i) The automorphic representation $\Theta(1)$ is irreducible and occurs with multiplicity one in the space of automorphic forms of $G_D$;
\vskip 5pt

\noindent (ii) For every $p$-adic place $v$ of $F$, the local component $\Theta(1)_v$ is isomorphic to the 
unique irreducible quotient of the local degenerate principal series $I_D(1)$. 
 \vskip 5pt
 
 \noindent (iii) For every real place $v$ of $F$, the local component $\Theta(1)_v$ is an irreducible quotient of $I_D(1)$ as described in Theorems \ref{T:real1}, \ref{T:real2} and \ref{T:Siegel-Weil}.
 \end{prop}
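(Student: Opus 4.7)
The plan is to piece this proposition together from the local analyses already carried out and from the non-vanishing of the regularized theta lift established in Theorem \ref{T:non_vanishing}. By construction, $\Theta(1)$ is realized in $\mathcal{A}(G_D(F) \backslash G_D(\mathbb{A}))$ as the image of a nonzero $G_D(\mathbb{A})$-equivariant map from $\Pi$ (regularized at one finite place) obtained by integrating $\theta(z \cdot f)$ against the trivial character of $D^1(F) \backslash D^1(\mathbb{A})$. Non-vanishing was secured earlier by invoking Theorem \ref{T:non_vanishing} with $h = 1$, since $z^{\vee}$ acts on the trivial representation by the nonzero scalar obtained by substituting $x = q$ into $z$. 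As an abstract $G_D(\mathbb{A})$-module, $\Theta(1)$ is therefore a quotient of the restricted tensor product $\otimes_v \Theta(1)_v$, where $\Theta(1)_v$ denotes the big local theta lift of the trivial representation of $D^1(F_v)$.

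The crucial input is that each $\Theta(1)_v$ is already irreducible and has been identified explicitly: for $p$-adic $v$ with $D_v$ a division algebra, by Corollary \ref{C:theta1}; for $p$-adic $v$ with $D_v$ split, by Proposition \ref{P:local_split}; and for real $v$, by Theorem \ref{T:real1}, \ref{T:real2}, or \ref{T:Siegel-Weil} depending on the isomorphism types of $\mathbb{O}_v$ and $D_v$. In every case, $\Theta(1)_v$ is identified with the unique irreducible quotient of $I_D(1)_v$ (equivalently, the unique irreducible submodule of $I_D(-1)_v$) of the stated type. Hence $\otimes_v \Theta(1)_v$ is abstractly irreducible, and the surjection $\otimes_v \Theta(1)_v \twoheadrightarrow \Theta(1)$ must be an isomorphism. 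This simultaneously settles the irreducibility in (i) and the local identifications (ii) and (iii).

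For the multiplicity-one assertion in (i), the idea is to exploit the rigidity of both the local and global halves of the theta construction. Any $G_D(\mathbb{A})$-equivariant embedding $\iota: \otimes_v \Theta(1)_v \hookrightarrow \mathcal{A}(G_D)$ pulls back via the surjection $\Pi \twoheadrightarrow \otimes_v \Theta(1)_v$ to a nonzero $D^1(\mathbb{A})$-invariant map $\Pi \to \mathcal{A}(G_D)$. The global theta embedding $\theta: \Pi \hookrightarrow \mathcal{A}(G)$ is unique up to scalar, and at each place $v$ the projection $\Pi_v \twoheadrightarrow \Theta(1)_v$ is also unique up to scalar; together, these rigidity statements force $\iota$ to be a scalar multiple of the embedding furnished by our construction.

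The main obstacle is precisely this multiplicity-one statement, since the abstract irreducibility and the identification of local components follow essentially formally once the local lifts have been computed. Making the uniqueness argument fully rigorous requires care: one must pass cleanly between $D^1(\mathbb{A})$-invariant morphisms out of $\Pi$ and embeddings into $\mathcal{A}(G_D)$, and ensure that the rigidity statements at the archimedean places mesh with the regularization carried out at one finite place. The heart of the work, however, has already been done in the local sections, so once these compatibilities are laid out the proposition will follow.
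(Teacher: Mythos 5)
The irreducibility of the global lift and the identification of the local components follow the same route as the paper: restricted tensor product of the local big theta lifts (Corollary \ref{C:theta1}, Proposition \ref{P:local_split}, Theorems \ref{T:real1}, \ref{T:real2}, \ref{T:Siegel-Weil}) surjecting onto the global lift, and irreducibility of each factor forcing the surjection to be an isomorphism. That portion of your proof is fine.

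The multiplicity-one argument, however, has a genuine gap, and this is exactly where the paper appeals to an external theorem rather than to theta rigidity. Your sketch considers a second embedding $\iota\colon \otimes_v \Theta(1)_v \hookrightarrow \mathcal{A}(G_D)$ and proposes to pull it back along $\Pi \twoheadrightarrow \otimes_v \Theta(1)_v$ and then invoke uniqueness of the automorphic realization $\theta\colon \Pi \to \mathcal{A}(G)$ together with local uniqueness of $\Pi_v \twoheadrightarrow \Theta(1)_v$. But there is no reason a second copy of the abstract representation $\Theta(1)$ inside $\mathcal{A}(G_D)$ must arise from a $D^1(\mathbb{A})$-invariant map out of $\Pi$, nor is there a canonical map from $\mathcal{A}(G_D)$ back into the theta-theoretic picture along which to compare with the constructed realization: the uniqueness of $\theta\colon \Pi \to \mathcal{A}(G)$ governs automorphic forms on the big group $G$, not on $G_D$, and passage between the two involves the integration over $D^1(F)\backslash D^1(\mathbb{A})$, which is not reversible. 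In other words, your argument rules out at most two inequivalent \emph{theta-theoretic} realizations, not two inequivalent automorphic realizations of the abstract representation in $\mathcal{A}(G_D)$. The paper instead cites Kobayashi--Savin \cite[Theorem 1.1]{KS15} on global uniqueness of \emph{small} representations (whose hypotheses are checked via \cite{MS17}); this theorem bounds, by one, the multiplicity of any automorphic realization of a representation whose local components are sufficiently small (controlled Fourier support), which is precisely the structural fact your rigidity heuristic does not supply. Without this or an equivalent argument exploiting the smallness of $\Theta(1)_v$, the multiplicity-one assertion in part (i) remains unproved.
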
 
\vskip 5pt

\begin{proof}  Indeed, we have shown that the abstract local theta lift $\Theta(1_v)$ is irreducible. Hence the global $\Theta(1)$ is an irreducible automorphic representation. 
The fact  that $\Theta(1)$ has multiplicity one in the space of automorphic forms follows by  \cite[Theorem 1.1]{KS15}. 
Note that the required conditions, as spelled out in the introduction of \cite{KS15}, 
are satisfied by the recent work of M\"ollers and Schwarz \cite{MS17}. 
\end{proof}

\vskip 10pt

\subsection{A Siegel-Weil formula}
For a flat section $\Phi \in I_D(s)$, let $E_D(s, \Phi)$ be the associated Eisenstein series.
Then $E_D(s,\Phi)$ has at most simple poles at $s = 1,3$ or $5$ and the corresponding residual representations are completely described in 
  \cite[Theorem 6.4]{HS}.  Set
  \[  \mathcal E = \{ {\rm Res}_{s=1} E_D(s,\Phi): \Phi \in I_D(s) \}, \]
We can now prove Theorem \ref{T:sw} in the introduction (which we restate here):
\vskip 5pt

\begin{thm}   \label{T:sw2}
Let $F$ be a totally real global field and $D$ a quaternion algebra over $F$. 
 Then we have the following identity in the space of automorphic representations $G_D(\mathbb A)$, 
\[ 
\mathcal E = \oplus _{i: D \rightarrow \mathbb O} \Theta(1) , 
\] 
where the sum is taken over all isomorphism classes of embeddings $i: D \rightarrow \mathbb O$ into octonion algebras over $F$. 
\end{thm}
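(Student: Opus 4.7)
My plan is to establish both inclusions in the identity, combining the multiplicity-one statement and local structure of $\Theta(1)$ from Proposition \ref{P:Theta1} with the explicit description of $\mathcal E$ from \cite[Theorem 6.4]{HS}.

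For the inclusion $\bigoplus_i \Theta(1) \subseteq \mathcal E$, the key step is to realize, for each embedding $i: D \hookrightarrow \mathbb O$, the regularized theta integral $\Theta(z \cdot f)$ as a residue of the degenerate Eisenstein series $E_D(s, \Phi_f)$ for a suitable flat section $\Phi_f \in I_D(s)$ built from $f \in \Pi$. The inclusion $\Pi \hookrightarrow I(-5)$ (coming from the submodule structure of the degenerate principal series at $s = -5$) lets me view $f$ as a section on $G$; since $P \cap G_D = P_D$ with matching modular characters shifting the parameter from $s = -5$ to $s = -1$, restriction to $G_D$ yields $\Phi_f \in I_D(-1)$, which I would then extend to a flat section in $I_D(s)$. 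An unfolding argument along the $D^1(F) \times G_D(F)$-orbits on $P(F) \backslash G(F)$ should identify $\Theta(z \cdot f)(g)$ with $\mathrm{Res}_{s=1} E_D(s, \Phi_f)(g)$: the orbit whose stabilizer is commensurable with $P_D \cdot D^1$ contributes the residue, while the other orbits are annihilated by the regularizing element $z$ from Section 5.

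For the reverse inclusion $\mathcal E \subseteq \bigoplus_i \Theta(1)$, I would invoke the description of $\mathcal E$ in \cite[Theorem 6.4]{HS}, which identifies it as a direct sum of irreducible automorphic representations whose local components are precisely the socle constituents of $I_D(-1)_v$. By Theorems \ref{T:real1}, \ref{T:real2}, and \ref{T:Siegel-Weil}, each possible archimedean constituent (namely $\Sigma_{2,0} \oplus \Sigma_{0,2}$ when $\mathbb O_v$ is non-split, $\Sigma_{1,1}$ when $\mathbb O_v$ is split but $D_v$ is not, and the spherical submodule when both are split) corresponds, via the Hasse principle for octonion algebras containing $D$, to exactly one isomorphism class of embedding $i: D \hookrightarrow \mathbb O$. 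Thus as $i$ varies, the local components of $\Theta(1)$ exhaust those of the summands of $\mathcal E$, and distinct $i$ produce distinct constituents because their archimedean profiles differ. Part (i) of Proposition \ref{P:Theta1} then forces the sum $\bigoplus_i \Theta(1)$ to be an internal direct sum inside $\mathcal E$, matching it term by term.

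The main obstacle is the unfolding in the first step, i.e.\ identifying $\Theta(z \cdot f)$ with a residue of the Eisenstein series. Beyond the orbit-by-orbit matching — which requires describing $P(F) \backslash G(F) / G_D(F) D^1(F)$ explicitly and verifying that only the open orbit carries the residue contribution — one must check that the Bernstein-center regularization of Section 5 has exactly the intended effect: annihilating the closed-orbit and boundary contributions while preserving the open-orbit term, and producing a section $\Phi_f$ whose Eisenstein series has a simple pole at $s = 1$ with the right residue. Handling this in both the split and non-split cases for $D$, and reconciling it with the convergence estimates from Lemma \ref{L:convergence}, constitutes the bulk of the technical work.
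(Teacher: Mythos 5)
Your proposal is correct in spirit but takes a genuinely different and much harder route than the paper, and in doing so it misses the key shortcut. The paper never attempts the explicit unfolding you propose for the inclusion $\bigoplus_i \Theta(1) \subseteq \mathcal E$. Instead, it exploits Proposition \ref{P:Theta1}(i) — the multiplicity-one theorem from \cite[Theorem 1.1]{KS15} — to replace the entire unfolding step with a purely abstract comparison: once $\Theta(1)$ is known to be an irreducible automorphic representation with the local components computed in the $p$-adic and real sections (Proposition \ref{P:Theta1}(ii),(iii)), one checks against \cite[Theorem 6.4]{HS} that $\Theta(1)$ is \emph{abstractly isomorphic} to an irreducible summand of $\mathcal E$, and multiplicity one then forces $\Theta(1)$ to \emph{equal} that summand as a subspace of automorphic forms. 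No flat-section construction, no analysis of $D^1(F)\times G_D(F)$-orbits on $P(F)\backslash G(F)$, and no verification that the Bernstein-center element $z$ annihilates the boundary orbits are needed. What you describe in your first step is a Kudla--Rallis-style regularized Siegel--Weil argument; it would, if carried out, yield a sharper identity (with an explicit section $\Phi_f$ and constant), but it is a substantial independent project and is not what the theorem requires. Your treatment of the reverse inclusion — the Hasse-principle count of octonion algebras containing $D$, the archimedean matching, and the distinctness of the various $\Theta(1)$'s — is essentially the second half of the paper's argument, though the paper organizes it so that exhaustion of $\mathcal E$ and distinctness both flow from the same comparison of local data with \cite[Theorem 6.4]{HS} once multiplicity one is in place.
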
 
\vskip 10pt

\begin{proof}
 Comparing Proposition \ref{P:Theta1} with   \cite[Theorem 6.4]{HS}, one
 sees  that $\Theta(1)$ is isomorphic, as an abstract representation, to a summand of  $\mathcal E$. 
  In view of the multiplicity one result  in Proposition \ref{P:Theta1}(i),  it follows that $\Theta(1)$ is equal 
to that irreducible summand, as a subspace of the space of automorphic forms.
\vskip 5pt

Now recall that the dual pair $D^1\times G_D$ arises from an embedding of 
$D$ into an octonion algebra $\mathbb O$. Every such embedding is unique up to conjugacy by $\Aut(\mathbb O)$. However, given $D$ there are multiple 
octonion algebras over $F$ containing $D$.  An isomorphism class of octonion algebras $\mathbb O$  over $F$ is specified by the isomorphism class 
of its local completions $\mathbb O_v$ for real places $v$. At each real place,  we have two choices:  the classical octonion algebra and its split form. 
But $D_v$ embeds into both 
if and only if it is a quaternion division algebra. Hence the number of octonion algebras over $F$ containing $D$ is 
is $2^m$ where $m$ is the number of real places $v$ such that $D_v$ is the quaternion algebra. Now, by an easy check left to the reader, 
non-isomorphic $\mathbb O$ give non-isomorphic $\Theta(1)$. Moreover, using our description of $\Theta(1)$ in Proposition \ref{P:Theta1} and   \cite[Theorem 6.4]{HS}, 
one sees that  all those  possible $\Theta(1)$ sum up to $\mathcal E$. This proves the theorem.
\end{proof}

\vskip 10pt 

\subsection{Weak lifting} 
As explained in the introduction, we have the following see-saw dual pair in $G$: 

\[
 \xymatrix{
  \Aut(\mathbb O)  \ar@{-}[dr] \ar@{-}[d] & G_D
     \ar@{-}[d] \\
  D^1 \ar@{-}[ur] &  \PGSp_6}
\]

\vskip 10pt

\noindent Using this, we shall complete the proof of Theorem \ref{T:main} (which we reproduce here):
\vskip 5pt

 \begin{thm}  \label{T:main2}
  Suppose that $\pi$ is a cuspidal automorphic representation of $\PGSp_6$ (over $F=\mathbb Q$)  such that $L^S(s, \pi, {\rm Spin})$ has a pole at $s=1$. Then  there exists an octonion algebra $\mathbb O$ over $F$ and  a cuspidal automorphic representation $\pi'$ of $\Aut(\mathbb O)$ such that the Satake parameters of $\pi'$ 
  are mapped by $\iota$ to those of $\pi$ (i.e. $\pi$ is  a weak functorial lift of $\pi'$).   
  \vskip 5pt
  
  If the cuspidal representation $\pi$ of $\PGSp_6$ is tempered, then the following are equivalent:
  \vskip 5pt
  \begin{itemize}
  \item[(a)]  For almost all places $v$, the Satake parameter $s_v$ of $\pi_v$ is contained in $\iota(G_2(\mathbb C))$.
  
  \item[(b)] There exists an octonion algebra $\mathbb O$ over $F$ and  a cuspidal automorphic representation $\pi'$ of $\Aut(\mathbb O)$ such that  
  $\pi$ is a weak functorial lift of $\pi'$.
    \end{itemize}
      \end{thm}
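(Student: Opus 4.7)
The plan is to combine Pollack's Rankin--Selberg integral, the Siegel--Weil identity of Theorem~\ref{T:sw2}, and the see-saw between the dual pairs $D^1\times G_D$ and $\Aut(\mathbb O)\times\PGSp_6$ inside $G$. By \cite{P}, there is a distinguished quaternion $F$-algebra $D$ together with test data $h\in\pi$ and a section $f\in I_D(s)$ such that the partial Spin L-function is represented, up to local factors, by
\[
\int_{\PGSp_6(F)\backslash\PGSp_6(\mathbb A)} h(g)\,E_D(s,f)(g)\,dg.
\]
If $L^S(s,\pi,\mathrm{Spin})$ has a pole at $s=1$, then $E_D(s,f)$ must also have a pole there for suitable choices, and taking residues produces a nonzero pairing of $\pi$ against some element of the residual representation $\mathcal E_D$.

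By Theorem~\ref{T:sw2}, $\mathcal E_D=\bigoplus_{i:D\hookrightarrow\mathbb O}\Theta(1)$ as automorphic modules on $G_D(\mathbb A)$, where each summand is the regularized theta lift of the trivial representation of $D^1$ to $G_D$. Hence for at least one embedding $i:D\hookrightarrow\mathbb O$ the restriction of $\Theta(1)$ to $\PGSp_6\subset G_D$ pairs nontrivially with $\pi$. Invoking the see-saw diagram in the statement of the theorem, this pairing is equal to the $D^1(F)\backslash D^1(\mathbb A)$-period of the exceptional theta lift $\Theta^{\Aut(\mathbb O)\times\PGSp_6}(h)$ of $h\in\pi$ to $\Aut(\mathbb O)$. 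In particular $\Theta^{\Aut(\mathbb O)\times\PGSp_6}(\pi)$ is a nonzero automorphic representation of $\Aut(\mathbb O)(\mathbb A)$, from which I would extract a cuspidal constituent $\pi'$: if $\mathbb O$ is nonsplit then $\Aut(\mathbb O)$ is anisotropic and cuspidality is automatic, while in the split case a tower/constant-term argument is required to rule out that the image lies entirely in a residual spectrum. The functoriality of the spherical exceptional theta correspondence (\cite{LS}, \cite{SW15}) then identifies the Satake parameters of $\pi'$ at every unramified place with those of $\pi$ via $\iota:G_2(\mathbb C)\hookrightarrow\Spin_7(\mathbb C)$, proving the first assertion.

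For the equivalence in the tempered case, the implication (b)$\Rightarrow$(a) is immediate from the definition of weak functorial lift: Satake parameters of $\pi$ are pushed forward from those of $\pi'$ via $\iota$, hence lie in $\iota(G_2(\mathbb C))$. For (a)$\Rightarrow$(b), decompose the $8$-dimensional spin representation restricted to $G_2(\mathbb C)$: one has $r|_{G_2}=\mathbf 1\oplus V_7$, where $V_7$ is the standard $7$-dimensional representation. Hypothesis (a) then gives the factorization
\[
L^S(s,\pi,\mathrm{Spin})=\zeta^S(s)\cdot L^S(s,\pi,V_7),
\]
and temperedness together with standard nonvanishing results on $\mathrm{Re}(s)=1$ of Jacquet--Shalika type ensures that $L^S(s,\pi,V_7)$ is holomorphic and nonzero at $s=1$. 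Therefore $L^S(s,\pi,\mathrm{Spin})$ inherits a simple pole at $s=1$ from $\zeta^S$, and the first part of the theorem produces the desired $\pi'$.

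The principal obstacles in this plan are: (i) showing cuspidality of the extracted $\pi'$ when $\mathbb O$ is split, which requires understanding constant terms of the exceptional theta lift along the two proper parabolics of $\Aut(\mathbb O)$ and identifying any noncuspidal contributions; and (ii) verifying on the analytic side that Pollack's integral is nonvanishing for some test data $(h,f)$ when $E_D(s,f)$ has a pole---this is where the precise form of the unfolding in \cite{P} is needed. Once both points are in place, the see-saw reduction and the functoriality of the spherical exceptional correspondence make the rest of the argument formal.
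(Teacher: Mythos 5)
Your proposal follows essentially the same route as the paper: use Pollack's Rankin--Selberg identity to deduce that a pole of $L^S(s,\pi,\mathrm{Spin})$ at $s=1$ forces a nonzero pairing of $\pi$ against a residue of $E_D(s,\Phi)$, invoke Theorem~\ref{T:sw2} to rewrite that residue as $\bigoplus_{\mathbb O \supset D}\Theta(1)$, run the see-saw $D^1\times G_D$ versus $\Aut(\mathbb O)\times\PGSp_6$ to conclude that the exceptional theta lift of $\pi$ to some $\Aut(\mathbb O)$ is nonzero, and finish with the functoriality of the spherical exceptional correspondence. The equivalence in the tempered case is also argued the same way, via the decomposition of the spin representation restricted to $G_2(\mathbb C)$ (Chenevier's observation) and nonvanishing of the degree-$7$ standard $L$-function at $s=1$.

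The two ``principal obstacles'' you flag are not actually gaps in the paper's argument, because both are supplied by external results. For~(i), the cuspidality of the exceptional theta lift of a cuspidal $\pi$ on $\PGSp_6$ to $G_2$ is established directly by a computation of its constant terms, independently of whether $\mathbb O$ is split; this is in \cite[Theorem~3.1]{GJ} (the genericity hypothesis there is not needed) and \cite[Proposition~5.2]{GG}. In particular there is no residual contribution to worry about and no need to ``extract a cuspidal constituent'': the entire lift is cuspidal, and the paper actually records this \emph{before} even addressing nonvanishing. For~(ii), Pollack's theorem already includes the statement that the semi-local factor $I_S(\phi,\Phi,s_0)$ is nonzero at any fixed $s_0$ for suitable data, so no extra unfolding analysis is required. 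One small technical point the paper adds that you omit: since Pollack's identity pairs $\phi$ (not $\bar\phi$) against the Eisenstein series, the paper observes that the space of residues at $s=1$ is stable under complex conjugation, so that one may replace $\phi$ by $\bar\phi$ and match the theta-integral exactly. With those references inserted, your argument is complete and matches the paper's.
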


\vskip 5pt

\begin{proof}
Let $\pi$ be an irreducible cuspidal automorphic representation of $\PGSp_6$ and consider its global theta lift $\pi'$ on $G_2$. It can be shown (by a standard computation of the constant term of the global theta lift) that $\pi;$ is contained in the space of cusp forms on $G_2$. This was explained  in \cite[Theorem 3.1]{GJ}, noting that the genericity assumption on $\pi'$ was not needed there. See also \cite[Proposition 5.2]{GG}.
\vskip 5pt

Now suppose that  the  partial (degree 8) spin $L$-function $L^S(s, \pi, {\rm Spin})$ of $\pi$ has a pole at $s=1$.  
In the first arXiv version of the paper \cite{P},  Theorem 9.4, A. Pollack 
has given a representation of $L^S(s, \pi, {\rm Spin})$ by an integral of an automorphic form $\phi\in\pi$ against the Eisenstein series $E_D(s, \Phi)$ for $\Phi\in I_D(s)$. 
In other words, there is an identity 
\[ 
 \int_{\PGSp_6(F) \backslash \PGSp_6(\A)} E_D(s,\Phi)  \phi (g)   \, dg  = I_S(\phi, \Phi, s)  L^S(s, \pi, {\rm Spin}) 
  \] 
 for some sufficiently large set of places $S$ of $F$, and where $I_S(\phi, \Phi, s)$ is a semi local integral over the places in $S$. (We note that Pollack has a slightly different 
 choice of the parameter of the Eisenstein series: his parameter $s'$ and our $s$ are related by $s=2s'-5$.) 
  Moreover, for any $s_0\in \mathbb C$, there exists $\phi\in \pi$ and $\Phi$ such that $I_S(\phi, \Phi, s_0)\neq 0$. Hence the assumption that 
${\rm Res}_{s=1} L^S(s,\pi, {\rm Spin})\neq 0$ implies that the integral of $\phi$ against some residue  
${\rm Res}_{s=1} E_D(s,\Phi)$ is non-zero.  Since the space of residues at $s=1$ is invariant under the complex conjugation,
 it follows that the integral of $\bar\phi$ against some residue  ${\rm Res}_{s=1} E_D(s,\Phi)$ is non-zero.
By the Siegel-Weil formula (Theorem \ref{T:sw2}), it follows that 
\[  
 \int_{\PGSp_6(F) \backslash \PGSp_6(\A)} \bar\phi(g) \cdot \left(  \int_{D^1(F) \backslash D^1(\A)}  \theta(f)(gh)  \, dh \right) \, dg  \ne 0 
 \] 
for some $\mathbb O \supset D$, $f \in \Pi_{\mathbb O}$ and $\phi \in \pi$, where $\theta(f)$ is rapidly decreasing on $D^1(F) \backslash D^1(\A)$ and of moderate 
growth on $\PGSp_6(\A)$. 
Exchanging the order of integration, we deduce that 
the global theta lift of $\pi$ to $\Aut(\mathbb O)$ is nonzero, i.e.
 \[ 
\phi'(h)= \int_{\PGSp_6(F)\backslash \PGSp_6(\mathbb A)} \theta(f)( g h) \bar\phi(g) ~dg 
\] 
is a non-zero function of uniform moderate growth on $\Aut(\mathbb O)\backslash \Aut(\mathbb O\otimes_F\mathbb A)$. 
It is given that $\phi$ is an eigenfunction for the center of the enveloping algebra of $\Aut(\mathbb O_v)$ for every real place $v$ of $F$. 
By \cite{HPS} and \cite{Li}, for every element $z'$ in the center of the enveloping algebra of $\PGSp_6(F_v)$, there exists an element in the 
center of the enveloping algebra of $\Aut(\mathbb O_v)$ such that $z=z'$, when acting on the minimal representation, in particular $z'\cdot f=z\cdot f$. 
Thus $\phi'$ is an eigenfunction for the enveloping algebra of $\PGSp_6(F_v)$ for every real place 
$v$ of $F$. (At this point we use that $\phi$ has rapid decrease to justify that differentiation of $f$ can be moved over to differentiation of $\phi$.) 
\vskip 5pt

Similarly,  it is given that $\phi$ is an eigenfunction for the Hecke algebra for almost all finite places. But so is $\phi'$ by matching of 
Hecke operators under the exceptional theta correspondences  \cite{SW15}. Moreover, by   \cite[Theorem 1.1]{SW15},  if $s'_v$ are the Satake conjugacy classes 
in $G_2(\mathbb C)$ corresponding to $\phi'$ and 
$s_v$ are the Satake conjugacy classes in $\Spin_7(\mathbb C)$ corresponding to $\phi$, 
 then $s_v=\iota(s'_v)$ where $\iota : G_2(\mathbb C) \rightarrow \Spin_7(\mathbb C)$ is the natural inclusion.  
 Hence, the submodule generated by all such global theta lifts $\phi'$ gives an automorphic representation $\pi'$ which weakly lifts to $\pi$.
 This proves the first assertion of the theorem.
 \vskip 10pt
 
 For the second part of the theorem,  it is clear that (b) implies (a). Conversely, 
as observed by Chenevier \cite[\S 6.12]{C}, the hypothesis (a) in the theorem implies that
 \[  L^S(s, \pi, {\rm Spin})  = \zeta^S(s) \cdot L^S(s, \pi, {\rm Std}) \]
 where the last L-function on the right is the degree 7 (partial) standard L-function of $\pi$. Since we are assuming that $\pi$ is tempered, it follows that 
$L^S(1, \pi, {\rm Std})$ is finite and nonzero. Hence $L^S(s, \pi, {\rm Spin})$ has a pole at $s=1$ and the results we have shown above imply that  (b) holds, with $\pi'$ the global theta lift of $\pi$ to $\Aut(\mathbb O)$. 

\vskip 5pt

 This completes the proof of the theorem.
 \end{proof}
 
 \vskip 5pt 
 We can strengthen our results in the case when $F=\mathbb Q$ and $\pi$ is a cuspidal representation of $\PGSp_6(\A)$ that corresponds to a classical Siegel 
 holomorphic form of positive weight. Recall that there are two isomorphism classes of octonion algebras over $\mathbb Q$: the classical octonion algebra 
 $\mathbb O^c$ and its split form $\mathbb O^s$. Then $\Aut(\mathbb O^c_{\infty})$ is an anisotropic group, while $\Aut(\mathbb O^s_{\infty})$ is split. 
 
 \begin{thm} Let $F=\mathbb Q$, and $\pi$ a cuspidal representation of $\PGSp_6(\A)$ that corresponds to a classical Siegel 
 holomorphic form  $\phi_{2r}$ of weight $2r>0$. If $L^S(s, \pi, {\rm Spin})$ has a pole at $s=1$, then $\pi$ is a lift from $\Aut(\mathbb O^c)$. Moreover, 
 if the level of $\phi_{2r}$ is one, then $\pi$ is a strong functorial lift from $\Aut(\mathbb O^c)$. 
  \end{thm}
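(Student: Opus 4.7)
The plan is to refine Theorem~\ref{T:main2} in two ways using the archimedean input. Since $L^S(s,\pi,{\rm Spin})$ has a pole at $s=1$, Theorem~\ref{T:main2} already produces an octonion $\mathbb Q$-algebra $\mathbb O$ and a cuspidal representation $\pi'$ of $\Aut(\mathbb O)$ such that $\pi$ is a weak functorial lift of $\pi'$. Over $\mathbb Q$ there are only two such algebras, $\mathbb O^c$ and $\mathbb O^s$, so the first task is to rule out $\mathbb O = \mathbb O^s$ using the assumption that $\pi_\infty$ is a holomorphic discrete series of $\PGSp_6(\mathbb R)$ of weight $2r>0$.

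Suppose for contradiction $\mathbb O = \mathbb O^s$. Then at infinity the ambient group $G$ is the split real form of $E_7$, the group $\Aut(\mathbb O)_\infty$ is the split real $G_2$, and by construction $\pi_\infty$ must occur as a $\PGSp_6(\mathbb R)$-quotient of the $\pi'_\infty$-isotypic component of the restriction of the minimal representation $\Pi_\infty$ of split $E_{7(7)}$ to the dual pair $\Aut(\mathbb O^s)_\infty \times \PGSp_6(\mathbb R)$. I would exclude this by a $K$-type (or infinitesimal character) argument: the minimal representation of split $E_{7(7)}$ is not a Hermitian highest-weight module, so its restriction to $\PGSp_6(\mathbb R)$ cannot contain a holomorphic discrete series as a subquotient. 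The cleanest route is to apply the see-saw
\[
\xymatrix{
 \Aut(\mathbb O^s)_\infty \ar@{-}[dr] \ar@{-}[d] & G_{D,\infty} \ar@{-}[d] \\
 D^1_\infty \ar@{-}[ur] & \PGSp_6(\mathbb R)
}
\]
and compare with Theorems~\ref{T:real1}, \ref{T:real2} and~\ref{T:Siegel-Weil}: holomorphic/antiholomorphic pieces of the minimal representation appear in the $G$ arising from the Albert algebra over $\mathbb O^c$, i.e.\ the Hermitian real form of $E_7$, and not in the split one. This contradiction forces $\mathbb O = \mathbb O^c$, proving the first assertion.

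For the second assertion, assume further that $\phi_{2r}$ has level one, so $\pi_v$ is unramified at every finite place $v$. Then $\pi'_v$ is unramified for every finite $v$ as well, and the functoriality of the exceptional theta correspondence for spherical representations \cite{SW15} yields $s_v = \iota(s'_v)$ at every finite place — not merely for almost all — which is the finite part of a strong lift. At the archimedean place, $\Aut(\mathbb O^c)(\mathbb R)$ is compact, so $\pi'_\infty$ is a finite-dimensional irreducible representation with a highest weight $\lambda$ determining a semisimple conjugacy class in $G_2(\mathbb C)$. It remains to verify that $\iota$ sends this class to the Langlands parameter of the holomorphic discrete series $\pi_\infty$ of weight $2r$; this I would do by computing the $\Aut(\mathbb O^c)_\infty \times \PGSp_6(\mathbb R)$ archimedean theta lift directly, matching $K$-types on both sides along the lines of Theorem~\ref{T:real1}.

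The main obstacle is the archimedean input in both steps. Ruling out $\mathbb O^s$ requires a precise statement that no holomorphic discrete series of $\PGSp_6(\mathbb R)$ occurs as a theta lift inside the minimal representation of the split $E_{7(7)}$, which is a $(\mathfrak g,K)$-module computation that will likely follow by the methods of Section~7 combined with known branching laws for the split minimal representation. The archimedean parameter matching needed for the strong-lift conclusion is of a similar flavor, replacing the $D^1 \times G_D$ see-saw data used in Theorem~\ref{T:real1} with the $\Aut(\mathbb O^c)_\infty \times \PGSp_6(\mathbb R)$ theta correspondence and tracking highest weights through $\iota$.
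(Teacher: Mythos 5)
Your high-level plan has the right shape: rule out $\mathbb O^s$ via an archimedean $K$-type argument, then upgrade to a strong lift in the level-one case by matching at all finite places plus the archimedean place. But the specific mechanism you propose for the first step does not work. The claim that "the minimal representation of split $E_{7(7)}$ is not a Hermitian highest-weight module, so its restriction to $\PGSp_6(\mathbb R)$ cannot contain a holomorphic discrete series as a subquotient" is a false general principle: restriction of a non-lowest-weight module to a smaller reductive group can perfectly well produce lowest-weight subquotients (the holomorphicity of a representation is not inherited downwards or upwards in any naive way). Your fallback via the see-saw, comparing with Theorems \ref{T:real1}, \ref{T:real2}, \ref{T:Siegel-Weil}, is also not the right tool: those theorems compute $\Theta(1)$ for the dual pair $D^1\times G_D$, whereas what must be analyzed here is the archimedean theta correspondence for $\Aut(\mathbb O^s)_\infty\times\Sp_6(\mathbb R)$, a different dual pair sitting in the same group. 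What the paper actually invokes is the precise result of \cite{LS} on the split $G_2\times\Sp_6$ archimedean correspondence: a representation of $\Sp_6(\mathbb R)$ containing a $\det^{2r}$ $\mathrm{U}_3(\mathbb R)$-type can only arise as a lift of a spherical representation of $\Aut(\mathbb O^s)_\infty$, and spherical representations lift only to spherical ones. Since a holomorphic lowest-weight module of positive weight is not spherical, it cannot participate, forcing $\mathbb O = \mathbb O^c$.

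For the second assertion your reasoning is sound (level one gives everywhere-unramified at finite places, so \cite{SW15} gives Satake matching at all finite places; at $\infty$ one needs to match the archimedean parameter), but rather than "computing the archimedean theta lift directly," the paper simply cites \cite{GrS}, where the $\Aut(\mathbb O^c)_\infty\times\Sp_6(\mathbb R)$ correspondence was completely determined and shown to be functorial. So the gap to fill is: replace the flawed Hermitian/non-Hermitian heuristic with the \cite{LS} result on the split archimedean $G_2\times\Sp_6$ dual pair, and cite \cite{GrS} for the compact archimedean matching.
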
 
  \begin{proof} Let ${\mathrm U}_3(\mathbb R)$ be the maximal compact subgroup of $\Sp_6(\mathbb R)$. 
 By our assumption, $\pi_{\infty}$ is a lowest weight module, with the minimal ${\mathrm U}_3(\mathbb R)$-type $\det^{2r}$.  We need to show that 
 such representation does not occur in the exceptional theta correspondence with $\Aut(\mathbb O^s_{\infty})$. This correspondence is was studied in \cite{LS}. 
 It is shown there that if a representation of $\Sp_6(\mathbb R)$ that contains a type $\det^{2r}$ participates in the theta correspondence with $G_2(\mathbb R)$, it must be
 a lift of a spherical representation of $\Aut(\mathbb O^s_{\infty})$. 
 Truth be told, this was shown in \cite{LS} only for $r=0$, but the same argument works for any $r$. It was also shown in \cite{LS} that a spherical representation of $\Aut(\mathbb O^s_{\infty})$ 
  always lifts to a spherical representation of $\Sp_6(\mathbb R)$. But our $\pi_{\infty}$ is not spherical, hence it cannot appear in this correspondence. 
\vskip 5pt

 The correspondence for the dual pair  $\Aut(\mathbb O^c_{\infty})\times \Sp_6(\mathbb R)$ was completely determined in \cite{GrS} and is functorial. Thus, if $\phi_{2r}$ is of level 
 one, i.e. spherical at all primes, then $\pi$ is indeed a (strong) functorial lift from $\Aut(\mathbb O^c_{\infty})$.  
 
 \end{proof} 
\vskip 15pt

\noindent{\bf Acknowledgments:} W.T. Gan is partially supported by an MOE Tier 2 grant R146-000-233. G. Savin is partially supported by 
Simons Foundation grant 579347. 

\vskip 15pt

 \vskip 15pt


\begin{thebibliography}{99999}

\bibitem[C]{C} G. Chenevier, {\em Subgroups of $\Spin(7)$ or $\SO(7)$ with each element conjugate to some element of $G_2$, and applications to automorphic forms}, to appear in Documenta (2019).


\bibitem[DS99]{DS99} 
A. Dvorsky and S. Sahi, {\em Explicit Hilbert spaces for certain unipotent representations II,} 
 Invent. Math. 138 (1999) 203-224.

\bibitem[G]{G}  W. T. Gan, 
{\em A regularized Siegel-Weil formula for exceptional groups,} 
 Arithmetic geometry and automorphic forms, 155-182, Adv. Lect. Math. (ALM), 19, Int. Press, Somerville, MA, 2011. 
 
\bibitem[GG06]{GG06} 
W.T. Gan and N. Gurevich, {\em Non-tempered Arthur packets of $G_2$: liftings from $\tilde{SL}_2$} (with N. Gurevich), Amer. J. Math 128, No. 5 (2006), 1105-1185.

\bibitem[GG09]{GG} W.T. Gan and N. Gurevich, {\em CAP representations of $G_2$ and the Spin L-function of $\PGSp_6$}, Israel J. Math. 170 (2009), 1-52.

\bibitem[GS]{GS} W. T. Gan and G. Savin, 
{\em On minimal representations definitions and properties.} Represent. Theory 9 (2005), 46-93. 

\bibitem[GJ]{GJ} D. Ginzburg and D. Jiang, {\em Periods and lifting from $G_2$ to $C_3$,} 
Israel J. Math. 123 (2001), 29-59.

\bibitem[GrS]{GrS} B. H. Gross and G. Savin, 
{\em Motives with Galois group of type G2: an exceptional theta-correspondence.}  Compositio Math. 114 (1998), no. 2, 153Ð217

\bibitem[HS]{HS} M. Hanzer and G. Savin, 
{\em Eisenstein Series Arising from Jordan Algebras,} 
Canadian J. of Math. https://dx.doi.org/10.4153/CJM-2018-033-2

\bibitem[HKM]{HKM} J Hilgert, T. Kobayashi and  J. M\"ollers. 
{\em Minimal representations via Bessel operators.} J. Math. Soc. Jpn. 66 (2014), 349-414. 


\bibitem[HPS]{HPS} Jing-Song Huang, Pavle Pand\v zi\' c, and Gordan Savin, {\em New dual pair correspondences,}  Duke Math. J. 82 (1996), no. 2, 447-471. 

\bibitem[KS15]{KS15} T. Kobayashi and G. Savin, {\em Global uniqueness of small representations,} 
 Math. Z. 281 (2015), no. 1-2, 215Ð239.


\bibitem[KR]{KR} S. Kudla and S. Rallis, {\em A regularized Siegel-Weil formula: the first term identity,}  Ann. of Math. (2) 140 (1994), 1-80. http://dx.doi.org/10.2307/2118540

\bibitem[La]{La} Lapid, Erez M., {\em A remark on Eisenstein series. Eisenstein series and applications,} 239-249, Progr. Math., 258, Birkh\"auser Boston, Boston, MA, 2008.

\bibitem[Li99]{Li} J.-S. Li, {\em The correspondences of infinitesimal characters for reductive dual pairs in simple Lie groups,}  Duke Math. J. 97 (1999), no. 2, 347-377, 

\bibitem[LS]{LS} H. Y. Loke and G. Savin, {\em Duality for spherical representations in exceptional theta correspondences,} 
 Trans. Amer. Math. Soc. 371 (2019), no. 9, 6359-6375. https://doi.org/10.1090/tran/7471 

\bibitem[MS97]{MS97} K. Magaard and G. Savin,
{\em Exceptional $\Theta$-correspondences. I,} Compositio Math. 107 (1997), no. 1, 89-123. 

\bibitem[MW] {MW}
C. Moeglin and J.-L. Waldspurger,
{\it Spectral Decompostion and Eisenstein Series,}
Cambridge University Press, 1995. 

\bibitem[MS17]{MS17} J. M\"ollers and B. Schwarz,
{\em Bessel operators on Jordan pairs and small representations of semisimple Lie groups,} J. Funct. Anal. 272 (2017), no. 5, 1892-1955.

\bibitem[P]{P} A. Pollack, {\em The spin L-function on $\GSp_6$ for Siegel modular forms,} 
Compositio Math. 153 (2017), no. 7, 1391-1432. 


\bibitem[PS]{PS} A. Pollack,  S. Shah, {\em The spin $L$-function on $\GSp_6$ via a non-unique model,} 
Amer. J. Math. 153 (2018), no. 3, 753-788.  


\bibitem[Sa92]{Sa92} S. Sahi,
{\em Explicit Hilbert spaces for certain unipotent representations,} Invent. Math. 110 (2) (1992) 409-418. 

\bibitem[Sa93]{Sa93} S. Sahi, 
{\em Unitary Representations on the Shilov Boundary of a Symmetric Tube Domain,}  
Representations of groups and algebras, 275-286, Contemp. Math. 145, Amer. Math. Soc., Providence, RI, 1993. 

\bibitem[Sa95]{Sa95} S. Sahi,
{\em Jordan algebras and degenerete principal series,} J. reine und angew. Math. 462 (1995), 1-18. 

\bibitem[SW07] {SW07} G. Savin and M. Woodbury, 
{\em Structure of internal modules and a formula for the spherical vector of minimal representations,}  J. Algebra 312 (2007), no. 2, 755-772. 

\bibitem[SW15]{SW15}  G. Savin and M. Woodbury,
{\em Matching of Hecke operators for exceptional dual pair correspondences,} J. Number Theory 146 (2015), 534-556.

\bibitem[V]{V} D. Vogan, Singular Unitary Representations, Lecture Notes in Mathematics 880 (1981), 508-535.


\bibitem[We] {We} M. Weissman,
{\em The Fourier-Jacobi map and small representations,} Represent. Theory 7 (2003), 275-299. http://dx.doi.org/10.1090/S1088-4165-03-00197-3









\end{thebibliography}
\end{document}